\documentclass[12pt,a4paper]{amsart}
\usepackage{amscd,amssymb,amsopn,amsmath,amsthm,graphics,amsfonts,enumerate,verbatim,calc,textcomp}
\usepackage{pgf,tikz}
\usepackage{mathrsfs}
\usetikzlibrary{arrows}
\usepackage[usenames,dvipsnames]{pstricks}
 \usepackage{epsfig}
 \usepackage{pst-grad} 
 \usepackage{pst-plot} 
 \usepackage[space]{grffile} 
\usepackage{etoolbox} 

\textwidth=16cm \textheight=21.3cm \topmargin=0.00cm
\oddsidemargin=-0.5cm \evensidemargin=-0.5cm \headheight=15pt
\headsep=1cm \numberwithin{equation}{section}
\hyphenation{semi-stable} \emergencystretch=11pt

\newcommand{\To}{\rightarrow}

\newcommand{\D}{\mathcal{D} }

\newcommand{\Z}{\mathbb{Z} }

\newcommand{\CA}{\mathcal{A} }

\newcommand{\CF}{\mathcal{F} }

\newcommand{\CT}{\mathcal{T} }
\newcommand{\CX}{\mathcal{X} }

\newcommand{\CU}{\mathcal{U} }
\newcommand{\CV}{\mathcal{V} }

\newcommand{\im}{{\rm{Im}}}

\newcommand{\Ker}{{\rm{Ker}}}

\newcommand{\Spec}{{\rm{Spec}}}
\newcommand{\ASpec}{{\rm{ASpec}}}

\newcommand{\MinASupp}{{\rm{MinASupp}}}

\newcommand{\ASupp}{{\rm ASupp}}
\newcommand{\AAss}{{\rm AAss}}

\newcommand{\Hom}{{\rm{Hom}}}
\newcommand{\Ext}{{\rm{Ext}}}

\newtheorem{theorem}{Theorem}[section]
\newtheorem{corollary}[theorem]{Corollary}
\newtheorem{lemma}[theorem]{Lemma}
\newtheorem{proposition}[theorem]{Proposition}

\theoremstyle{definition}
\newtheorem{definition}[theorem]{Definition}

\newtheorem{definitions and notations}[theorem]{Definitions and Notations}

\theoremstyle{plain}

\theoremstyle{definition}

\numberwithin{equation}{section}

\begin{document}

\title{Local cohomology in Grothendieck categories}
\author{Fatemeh Savoji and Reza Sazeedeh}
\address{Department of Mathematics, Urmia University, P.O.Box: 165, Urmia, Iran}
\email{F.savoji@urmia.ac.ir}

\address{Department of Mathematics, Urmia University, P.O.Box: 165, Urmia, Iran}
\email{rsazeedeh@ipm.ir}

\makeatletter
\@namedef{subjclassname@2010}{%
  \textup{2010} Mathematics Subject Classification}
\makeatother

\subjclass[2010]{18E15, 18E30, 18E40}

\keywords{preradical functor, local cohomology, Grothendieck category, localizing
subcategory}

\begin{abstract}
Let $\CA$ be a locally noetherian Grothendieck category. In this paper we define and study the section functor on $\CA$
with respect to an open subset of $\ASpec\CA$. Next we define and study local cohomology theory in $\CA$ in terms of the section functors. Finally we study abstract local cohomology functor on the derived category $\mathcal{D}^+(\CA)$.   
\end{abstract}
\maketitle

\section{introduction}
Throughout this paper $\CA$ is a locally noetherian Grothendieck category. The main aim of this paper is to define and study local cohomology notion in Grothendieck categories. We define local cohomology with respect to an open subset of atom spectrum of $\CA$, $\ASpec\CA$, defined by Kanda [K1, K2, K3]. To be more precise, for any open subset $W$ of $\ASpec\CA$, we define the section functor $\Gamma_W$ on $\CA$ and we show that they are in corresponding to the left exact radical functors, a classical notion in Grothendieck categories.

Section 2 is devoted to some backgrounds about monoforms objects, atoms and atom spectrum. We obtain a result about atom support of monoform objects. We show if $\ASpec\CA$ is Alexandroff, then every monoform object $H$ with $\alpha=\overline{H}$ contains a monoform subobject $H_1$ such that $\ASupp(H_1)=\{\beta\in\ASpec\CA|\hspace{0.1cm} \alpha\leq \beta\}$. 

In section 3, we define and study preradical and radical functors on $\CA$. We find a characterization of left exact radical functors. We prove that if $\gamma$ is a left exact preradical functor, then it  preserves injective objects if and only if $\CT_{\gamma}$ is stable where $\CT_{\gamma}$ is the pretorsion class induced by $\gamma$ (cf. Proposition 3.3). 
In Theorem \ref{adjinj}, we prove that if $\gamma$ is a left exact radical functor with the corresponding torsion theory $(\CT_{\gamma},\CF_{\gamma})$ and if $M$ is a $\gamma$-torsion-free object of $\CA$, then up to isomorphisms, $GF(M)$ is the smallest faithfully $\gamma$-injective object containing $M$ where $F:\CA\To \CA/\CT_{\gamma}$ is the canonical functor with its right adjoint functor $G:\CA/\CT_{\gamma}\To \CA$. This theorem immediately concludes that $H(\alpha)$ is a faithfully $t_{\alpha}$-injective object for any $\alpha\in\ASpec\CA$ where $t_{\alpha}$ is the left exact radical functor corresponding to $\CX(\alpha)=\ASupp^{-1}(\ASpec\CA\setminus\overline{\{\alpha\}})$. 

Given an object $M$ of $\CA$ and an open subset $U$ of $\ASpec\CA$, the  torsion subobject of $M$ with respect to $U$ is denoted by 
$\Gamma_U(M)$ which is the largest subobject of $M$ such that $\ASupp(\Gamma_U(M))\subset U$. We prove in Theorem \ref{radgam} that any left exact preradical functor $\gamma$ is a subfunctor of $\Gamma_{U_{\gamma}}$, where $$U_{\gamma}=\{\alpha\in\ASpec\CA|\hspace{0.1cm} H\in\CT_{\gamma}\hspace{0.1cm} {\rm for}\hspace{0.1cm} {\rm some}\hspace{0.1cm} {\rm monoform} \hspace{0.1cm}{\rm object}\hspace{0.1cm} H\in\alpha\}.$$ Moreover, if $\gamma$ is radical, then the equality $\gamma=\Gamma_{U_{\gamma}}$ holds. We also prove that if $\Gamma_U$ preserves injective objects and $M$ is an object of $\CA$, then $\AAss(M/\Gamma_U(M))\subseteq\AAss(M)$ (cf. Proposition \ref{asssec}). 

In Section 4, we define the dimension of an object $M$ of $\CA$  in terms of atoms in $\ASupp(M)$. We obtain a relationship between this new dimension and the classical dimension given by Gabriel [Ga]. We also study  local cohomology of objects of $\CA$ with respect to an open subset of $\ASpec\CA$. We obtain a result about non-vanishing of local cohomology of objects of $\CA$. To be more precise, in Theorem \ref{nonvanish}, we show that if $(\CA,\mathfrak m)$ is a local category, $N$ is a noetherian object of $A$ of dimension one and for any $\alpha\in\AAss(N/\Gamma_{\mathfrak m}(N))$ there exists a monoform object $M$ such that ${\rm End}(M)$ is not a skew field and $\alpha=\overline{M}$, then $H_{\frak m}^1(N)$ is not noetherian.

In Section 5, we study abstract local cohomology on the category $\CA$. Our idea goes back to a work by Yoshino and Yoshizawa [YY]  on the category  of $R$-Mod, when $R$ is a commutative ring. In Proposition \ref{radab}, we show that if $\gamma$ is a left exact radical functor preserving injective objects, then ${\bf R}\gamma$ is an abstract local cohomology. Finally we prove that if $\delta$ is an abstract local cohomology on $\D^+(\CA)$, then there exists an open subset $W$ of $\ASpec\CA$ such that $\delta={\bf R}\Gamma_W$
(cf. Theorem \ref{tttt}).

\section{Monoform objects and their atom support}

We first recall from [K1] the definition of monoform objects and
atoms spectrum in a Grothendieck category. An abelian category $\CA$
is called {\it Grothendieck} if it has exact direct limits and a
generator.

\begin{definition}
(i) A nonzero object $M$ in $\CA$ is {\it monoform} if for any
nonzero subobject $N$ of $M$, there exists no common nonzero
subobject of $M$ and $M/N$ which means that there does not exist a
nonzero subobject of $M$ which is isomorphic to a subobject of
$M/N$. We denote by $\ASpec _0\CA$, the set of all monoform
objects of $\CA$.

 (ii) Two monoform objects $H$ and $H'$ are said to be {\it
atom-equivalent} if they have a common nonzero subobject.

(iii) By [K1, Proposition 2.8], the atom equivalence establishes an
equivalence relation on monoform objects; and hence for every
monoform object $H$, we denote the
 {\it equivalence class} of $H$, by $\overline{H}$, that is
\begin{center}
  $\overline{H}=\{G\in\ASpec _0\CA|\hspace{0.1cm} H \hspace{0.1cm}
 {\rm and}\hspace{0.1cm} G$ has a common nonzero subobject$\}.$
\end{center}

 (iv) The {\it atom spectrum} $\ASpec\CA$ of $\CA$ is the quotient set
of $\ASpec _0\CA$ consisting of all equivalence classes induced by
this equivalence relation. Any equivalence class is called an {\it
atom} of $\ASpec\CA$.

(v) For an object $M$ of $\CA$, we define a subset $\ASupp(M)$ of
$\ASpec\CA$ by
$$\ASupp M=\{\alpha\in\ASpec\CA|\hspace{0.1cm} {\rm there\hspace{0.1cm} exists\hspace{0.1cm}}
 H\in\alpha\hspace{0.1cm} {\rm which \hspace{0.1cm}is\hspace{0.1cm} a
 \hspace{0.1cm}subquotient\hspace{0.1cm} of\hspace{0.1cm}} M\}.$$

We also define {\it the associated atoms} of $M$, denoted by
$\AAss(M)$, a subset of $\ASupp(M)$ that is
$$\AAss M=\{\alpha\in\ASupp(M)|\hspace{0.1cm} {\rm there\hspace{0.1cm} exists\hspace{0.1cm}}
 H\in\alpha\hspace{0.1cm} {\rm which \hspace{0.1cm}is\hspace{0.1cm} a
 \hspace{0.1cm}subobject\hspace{0.1cm} of\hspace{0.1cm}} M\}.$$

 (v) A subset $\Phi$ of $\ASpec\CA$ is called {\it open} if for any
$\alpha\in\Phi$, there exists $H\in\alpha$ such that
$\ASupp(H)\subset\Phi$. For any nonzero object $M$ of $\CA$, it is
clear that $\ASupp(M)$ is an open subset of $\CA$.
 \end{definition}

 \medskip
We recall the definition of Serre subcategories and the quotient category induced by a Serre subcategory. 
\begin{definition}
A full subcategory $\CX$ of an abelian category $\CA$ is called {\it Serre} if for any exact sequence $0\To M\To N\To K\To 0$ of $\CA$, the object $N$ belongs to $\CX$ if and only if $M$ and $K$ belong to $\CX$. The quotient category $\CA/\CX$ of $\CA$ induced by $\CX$ is defined as follows.
\begin{enumerate}
	\item The objects of $\CA/\CX$ and $\CA$ are the same. 
	\item For any objects $M$ and $N$ in $\CA$ we have
	$$\Hom_{\CA/\CX}(M,N)= \underset{\underset{(M',N')\in\mathcal{S}(M,N)}{\longrightarrow}}{\mbox{lim}}\Hom_{\CA}(M',N/N')$$
	where $\mathcal{S}(M,N)$ is the direct set defined by $$\mathcal{S}(M,N)=\{(M',N')|\hspace{0.1cm}M'\subset M, N'\subset N \hspace{0.1cm}{\rm such\hspace{0.1cm}  that} \hspace{0.1cm}M/M',N'\in\CX\}$$ and for $(M',N'), (M'',N'')\in\mathcal{S}(M,N)$, we have $(M',N')\leq (M'',N'')$ if $M''\subset M'$ and $N'\subset N''$. 
		\item Let $L, M, N$ be objects in $\CA$ and $[f]\in\Hom_{\CA/\CX}(L,M)$ and $[g]\in\Hom_{A/\CX}(M,N)$. Assume that $[f]$ and $[g]$ are represented by $f\in\Hom_{\CA}(L',M/M')$ and $g\in\Hom_{\CA}(M'',N/N'')$ where $(L',M')\in\mathcal{S}(L,M)$ and 
		$(M'',N'')\in\mathcal{S}(M,N)$, respectively. Then the composite $[g]\circ[f]\in\Hom_{\mathcal{S}}(L,N)$ is the equivalence class of the composite of $f'':f^{-1}(\frac{M'+M''}{M'})\To \frac{M'+M''}{M'}$ and $g':\frac{M'+M''}{M'}\To N/N'$ where $f''$ and $g'$ are the induced morphisms by $f$ and $g$, respectively and $N'/N''=g(M'\cap M'')$.
	\end{enumerate}
	In this case, we can define the canonical additive functor $F:\CA\To\CA/\CX$ by the assignment $M\mapsto M$  for each object $M$ of $\CA$ and the canonical map $\Hom_{\CA}(M,N)\To\Hom_{\CA/\CX}(M,N)$ for objects $M$ and $N$ in $\CA$.
	
		The Serre subcategory $\CX$ of $\CA$ is called {\it localizing} if the canonical functor $F$ has a right adjoint functor. 
\end{definition}
 
\medskip

We recall from [K2] that $\ASpec\CA$ can be regarded as a partially
ordered set together with a specialization order $\leq$ as follows:
for any atoms $\alpha$ and $\beta$ in $\ASpec\CA$, we have
$\alpha\leq \beta$ if and only if for any open subclass $\Phi$ of
$\ASpec\CA$ satisfying $\alpha\in \Phi$, we have $\beta\in\Phi$.

\medskip

For every $\alpha\in\ASpec\CA$, the topological closure of $\alpha$, denoted by $\overline{\{\alpha\}}$ consists of all
 $\beta\in\ASpec\CA$ such that $\beta\leq\alpha$. According to [K1, Theorem 5.7], for each atom $\alpha$, there exists a localizing subcategory
 $\CX(\alpha)=\ASupp^{-1}(\ASpec\CA\setminus\overline{\{\alpha\}})$ induced by $\alpha$, where $\ASupp^{-1}(U)=\{M\in\CA|\hspace{0.1cm}\ASupp(M)\subseteq U\}$ for any subset $U$ of $\ASpec\CA$. We denote by $\CA_{\alpha}$ the quotient category $\CA/\CX(\alpha)$. 

For any object $M$ of $\CA$, we denote $F(M)$ by $M_{\alpha}$ where $F:\CA\To\CA_{\alpha}$ is the canonical functor. We also remember from [K1, Proposition 5.10] that $$\ASupp(M)=\{\alpha\in\ASpec\CA|\hspace{0.1cm}M_{\alpha}\neq 0\}$$ and using [K1,Proposition 5.5], we have $$\ASupp(M_{\alpha})=\{\beta \in\ASupp(M)
|\hspace{0.1cm}\beta\leq \alpha\}.$$

A topological space $X$ is called {\it Alexandroff} if the intersection of any family of open
subsets of $X$ is also open.

\medskip
\begin{proposition}\label{alex}
If $\alpha\in\ASpec\CA$, then $\bigcap_{H\in\alpha}\limits\ASupp(H)=\{\beta\in\ASpec\CA|\hspace{0.1cm} \alpha\leq \beta\}$. In particular if $\ASpec\CA$ is Alexandroff, then there exists a monoform object $H$ with $\alpha=\overline{H}$ and $\ASupp(H)=\{\beta\in\ASpec\CA|\hspace{0.1cm} \alpha\leq \beta\}$.
\end{proposition}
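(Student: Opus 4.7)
The plan is to establish the set equality first by double inclusion, and then deduce the ``in particular'' clause as a short consequence of the Alexandroff hypothesis.

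For the inclusion $(\supseteq)$ I would fix $\beta\in\ASpec\CA$ with $\alpha\leq\beta$ and pick an arbitrary $H\in\alpha$. Since $\overline{H}=\alpha$, the atom $\alpha$ lies in $\ASupp(H)$, and $\ASupp(H)$ is an open subset of $\ASpec\CA$ by Definition 2.1(v). The characterization of the specialization order recalled just before the statement (namely $\alpha\leq\beta$ iff every open subclass of $\ASpec\CA$ containing $\alpha$ also contains $\beta$) then forces $\beta\in\ASupp(H)$. Letting $H$ range over $\alpha$ gives $\beta\in\bigcap_{H\in\alpha}\ASupp(H)$.

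For the reverse inclusion $(\subseteq)$ I would take $\beta$ in the intersection and verify $\alpha\leq\beta$ directly from the definition of the specialization order. Given any open subclass $\Phi$ of $\ASpec\CA$ with $\alpha\in\Phi$, the very definition of openness supplies some $H\in\alpha$ with $\ASupp(H)\subseteq\Phi$. Since $\beta$ lies in $\ASupp(H)$ for every $H\in\alpha$, in particular $\beta\in\ASupp(H)\subseteq\Phi$, so $\alpha\leq\beta$.

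For the ``in particular'' clause, set $V=\{\beta\in\ASpec\CA\mid \alpha\leq\beta\}$. The first part identifies $V$ with the intersection $\bigcap_{H\in\alpha}\ASupp(H)$ of open subsets of $\ASpec\CA$, so the Alexandroff hypothesis makes $V$ itself open. Since $\alpha\leq\alpha$ we have $\alpha\in V$, and openness then produces a monoform object $H_0\in\alpha$ with $\ASupp(H_0)\subseteq V$. The reverse inclusion $V\subseteq\ASupp(H_0)$ is automatic because $V$ is defined as an intersection indexed by all $H\in\alpha$, so $\ASupp(H_0)=V$; and $\overline{H_0}=\alpha$ because $H_0\in\alpha$. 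The argument is essentially formal once the two operative definitions (the specialization order through open subsets, and openness of a subset through $\ASupp$ of a representative) are in hand; the only mildly delicate point is keeping track of which direction of ``open'' supplies the representative—in $(\subseteq)$ it produces $H$ from $\Phi$, whereas in the Alexandroff step it produces $H_0$ from the newly open set $V$—so nothing beyond assembling the previous inclusions is needed.
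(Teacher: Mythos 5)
Your proof is correct. It differs from the paper's in two mild but genuine ways. First, for the intersection formula $\bigcap_{H\in\alpha}\ASupp(H)=\{\beta\mid\alpha\leq\beta\}$ the paper simply cites Kanda [K2, Proposition 4.2], whereas you prove it directly from the two operative definitions (openness of $\ASupp(H)$ plus the characterization of $\leq$ via open subclasses); your double-inclusion argument is sound and makes the statement self-contained. Second, for the Alexandroff clause both arguments start the same way (the set $V=\{\beta\mid\alpha\leq\beta\}$ is an intersection of open sets, hence open), but then diverge: the paper writes $V=\ASupp(M)$ for some object $M$ (implicitly using that every open subset is the atom support of an object) and extracts a monoform subobject of a subquotient of $M$ representing $\alpha$, while you apply the definition of openness directly at the point $\alpha\in V$ to produce $H_0\in\alpha$ with $\ASupp(H_0)\subseteq V$, and get the reverse inclusion for free from the first part. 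Your route is slightly more economical: it avoids the unjustified intermediate fact that open sets are realized as $\ASupp(M)$ and the subquotient extraction, at no cost in generality, and both proofs finish identically by invoking the first claim.
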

\begin{proof}
The first claim has been proved by Kanda [K2, Proposition 4.2]. In order to prove the second claim, since $\CA$ is Alexandroff, $\{\beta\in\ASpec\CA|\hspace{0.1cm} \alpha\leq \beta\}$ is open and so there exists an object $M$ of $\CA$ such that $\{\beta\in\ASpec\CA|\hspace{0.1cm} \alpha\leq \beta\}=\ASupp(M)$. Since $\alpha\in\ASupp(M)$, the object $M$ has a subquotient $M/K$ containing a monoform subobject $H$ with $\overline{H}=\alpha$. It is clear by the first part that $\ASupp(H)=\{\beta\in\ASpec\CA|\hspace{0.1cm} \alpha\leq \beta\}$. 
\end{proof}

\medskip
\begin{corollary}
Let $\ASpec\CA$ be an Alexandroff topological space and $\alpha\in\ASpec\CA$. Then every monoform object $H$ with $\alpha=\overline{H}$ contains a monoform subobject $H_1$ such that $\ASupp(H_1)=\{\beta\in\ASpec\CA|\hspace{0.1cm} \alpha\leq \beta\}$.
 \end{corollary}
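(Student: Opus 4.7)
The plan is to combine the ``existence'' half of Proposition \ref{alex} with the atom equivalence relation, reducing the corollary to a subobject argument.

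First, since $\ASpec\CA$ is Alexandroff, Proposition \ref{alex} furnishes some monoform object $H'$ with $\overline{H'}=\alpha$ and $\ASupp(H')=\{\beta\in\ASpec\CA\mid\alpha\leq\beta\}$. Given an arbitrary monoform $H$ with $\overline{H}=\alpha$, the atom equivalence $\overline{H}=\overline{H'}$ means by definition that $H$ and $H'$ share a common nonzero subobject, i.e.\ there exists a nonzero subobject $H_1\subseteq H$ that is isomorphic to a subobject of $H'$. This $H_1$ is our candidate.

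Next I would verify the three required properties of $H_1$. It is a nonzero subobject of $H$ by construction. It is monoform because any nonzero subobject of a monoform object is monoform (this is immediate from the definition; see [K1, Proposition 2.2]). For the support computation, the inclusion $\ASupp(H_1)\subseteq\ASupp(H')=\{\beta\mid\alpha\leq\beta\}$ is clear because $\ASupp$ is monotone under subquotients (and isomorphism-invariant, so identifying $H_1$ with its isomorphic copy inside $H'$ is harmless). For the reverse inclusion, observe that $H_1$ is itself a monoform object in the class $\alpha$, hence $H_1\in\alpha$ and the first claim of Proposition \ref{alex} gives
\[
\{\beta\in\ASpec\CA\mid\alpha\leq\beta\}=\bigcap_{G\in\alpha}\ASupp(G)\subseteq\ASupp(H_1).
\]

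Combining the two inclusions yields $\ASupp(H_1)=\{\beta\in\ASpec\CA\mid\alpha\leq\beta\}$, completing the proof. There is no real obstacle here: the corollary is essentially a bookkeeping upgrade of Proposition \ref{alex}, and the only subtle point to keep in mind is that the ``common subobject'' in the definition of atom equivalence is literally a subobject of $H$ (up to the isomorphism into $H'$), so it lives inside $H$ as required while still inheriting the support bound from $H'$.
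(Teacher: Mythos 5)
Your proof is correct and follows essentially the same route as the paper: invoke Proposition \ref{alex} to produce $H'$ with the prescribed support, take the common nonzero subobject $H_1$ of $H$ and $H'$ provided by atom equivalence, and use the first claim of Proposition \ref{alex} (the intersection formula) to pin down $\ASupp(H_1)$ from both sides. The paper phrases the support step as ``$\ASupp(H'')=\ASupp(H')$ for every subobject $H''$ of $H'$,'' which is exactly your two-inclusion argument.
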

\begin{proof}
According to Proposition \ref{alex}, there exists a monoform object  $H'$ such that $\alpha=\overline{H'}$ and $\ASupp(H')=
 \{\beta\in\ASpec\CA|\hspace{0.1cm} \alpha\leq \beta\}$. It is clear by Proposition \ref{alex} that $\ASupp(H')=\ASupp(H'')$ for any 
subobject $H''$ of $H'$. Furthermore, any monoform object $H$ with $\alpha=\overline{H}$ and $H'$ have a common nonzero subobject $H_1$ which satisfies our claim.
\end{proof}


\section{Preradical functors in Grothendieck categories}

Let ${\bf 1}:\CA\To\CA$ be the identity functor. Then ${\bf 1}$ is an object of the functor category ${\rm Fun}(\CA,\CA)$. Any subobject $\gamma$ of ${\bf 1}$ is called {\it preradical}. In other words, $\gamma(M)$ is a subobject of $M$ for any object $M$ of $\CA$ and for any morphism $f:M\To N$ of objects of $\CA$, the morphism $\gamma(f)$ is a restriction of $f$ onto $\gamma(M)$. The preradical $\gamma$ is called radical, if for any object $M$ of $\CA$, we have $\gamma(M/\gamma(M))=0$ and $\gamma$ is idempotent if $\gamma^2=\gamma$ (i.e. $\gamma(\gamma(M))=\gamma(M)$ for any object $M$).

For any preradical functor $\gamma$ on $\CA$, we define {\it pretorsion (or $\gamma$-pretorsion) class} $\CT_{\gamma}$ and {\it pretorsion-free (or $\gamma$-pretorsion-free) class} $\CF_{\gamma}$ as follows

$$\CT_{\gamma}=\{T\in\CA| \hspace{0.1cm}\gamma(T)=T\}$$
 $$\CF_{\gamma}=\{F\in\CA|\hspace{0.1cm} \gamma(F)=0\}.$$
We notice that each of element of $\CT_{\gamma}$ is called $\gamma$-{\it torsion} and each element of $\CF_{\gamma}$ is called $\gamma$-torsion-free. 
 
\medskip
Assume that $F:\CA\To \mathcal{D}$ and $F':\CA\To \mathcal{D}'$ are exact functors admitting full and faithful right adjoint functors. We say that $F\sim F'$ if there exists a unique equivalence functor $H:\mathcal{D}\To\mathcal{D}'$ such that $H\circ F\simeq F'$ ($\simeq$ shows the natural equivalence). It is clear  that $\sim$ is an equivalence relation and if $F\sim F'$, then $\Ker(F)=\Ker(F')$. Denoting the equivalence class of $F$ by $[F]$ and using [P, Chap 4, Theorem 4.9], for any exact functor $F:\CA\To \mathcal{D}$ admitting a full and faithful right adjoint functor, we have $[F]=[F_{\Ker(F)}]$ where $F_{\Ker(F)}:\CA\To\CA/\Ker(F)$ is the canonical exact functor. We put 
$$\mathscr{B}=\{[F]| F:\CA\To\mathcal{D} {\rm \hspace{0.1cm}is \hspace{0.1cm}an \hspace{0.1cm}exact\hspace{0.1cm} functor\hspace{0.1cm} }$$$${\rm admitting \hspace{0.1cm} a\hspace{0.1cm} full\hspace{0.1cm}
 and\hspace{0.1cm} faithful\hspace{0.1cm} right\hspace{0.1cm} adjoint\hspace{0.1cm} functor}\}.$$ 

\begin{proposition}
 Let $\CA$ have enough injective objects. Then there is a one-to-one correspondence between the class of left exact radical functors of $\CA$ and $\mathscr{B}$.
\end{proposition}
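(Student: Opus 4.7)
The plan is to factor the asserted bijection through the intermediate set of localizing subcategories of $\CA$. I would first establish a bijection between left exact radical functors and localizing subcategories, then a bijection between localizing subcategories and $\mathscr{B}$, and finally observe that the composite is the correspondence implicit in the statement.

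For the first bijection, assign to a left exact radical $\gamma$ the class $\CT_\gamma=\{M\in\CA:\gamma(M)=M\}$. Left exactness of $\gamma$ yields closure of $\CT_\gamma$ under subobjects; the inclusion $\gamma(M)\subseteq M$ gives closure under quotients; and a short diagram chase using the exact sequence definition of an extension together with left exactness of $\gamma$ gives closure under extensions. Because $\CA$ is Grothendieck and $\gamma$ is a left exact subfunctor of $\mathbf{1}$, $\gamma$ commutes with directed unions of subobjects, and this forces $\CT_\gamma$ to be closed under arbitrary coproducts, hence to be a localizing subcategory. Conversely, given a localizing subcategory $\CX$, define $\gamma_\CX(M)$ to be the sum of all subobjects of $M$ lying in $\CX$; by closure under coproducts and quotients this sum itself lies in $\CX$, so $\gamma_\CX(M)$ is the largest $\CX$-subobject of $M$, and one checks that $\gamma_\CX$ is a left exact radical functor. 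The two constructions are mutually inverse: $\gamma_{\CT_\gamma}=\gamma$ because the radical condition $\gamma(M/\gamma(M))=0$ is precisely the statement that $\gamma(M)$ is the largest $\gamma$-torsion subobject, and $\CT_{\gamma_\CX}=\CX$ because $M\in\CX$ iff $M=\gamma_\CX(M)$.

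For the second bijection, a localizing subcategory $\CX$ produces the canonical exact functor $F_\CX:\CA\To\CA/\CX$, which by the very definition recalled in Section~2 admits a right adjoint $G_\CX$. The hypothesis that $\CA$ has enough injectives is then used to show that the counit $F_\CX G_\CX\To\mathbf{1}_{\CA/\CX}$ is an isomorphism, i.e.\ $G_\CX$ is full and faithful; thus $[F_\CX]\in\mathscr{B}$. In the reverse direction, any $[F]\in\mathscr{B}$ has a well-defined kernel $\Ker(F)$, which is a Serre subcategory; fullness and faithfulness of the right adjoint of $F$ makes $\Ker(F)$ localizing. The cited result [P, Chap.~4, Thm.~4.9] provides $[F]=[F_{\Ker(F)}]$, and this together with $\Ker(F_\CX)=\CX$ shows the two assignments are mutually inverse.

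The main obstacle I anticipate is showing that $\CT_\gamma$ is closed under arbitrary (not merely finite) coproducts; this is where the Grothendieck hypothesis on $\CA$ and left exactness of $\gamma$ (beyond mere additivity) are essential, and it distinguishes left exact radicals from radicals in general. A subsidiary technicality is the verification that $\sim$-equivalent functors give the same kernel, so that $[F]\mapsto\Ker(F)$ is well defined — but this is immediate from the definition of $\sim$, since for any equivalence $H$ the identity $\Ker(H\circ F)=\Ker(F)$ holds.
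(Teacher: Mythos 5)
Your overall route is the same as the paper's, just with the intermediate step made explicit: the paper also passes through localizing subcategories, sending a left exact radical $r$ to $\CT_r$ (localizing by [P, Chap.~4, Prop.~5.2]), sending $[F]\in\mathscr{B}$ to the radical $r_{\Ker(F)}$ given by the largest subobject lying in $\Ker(F)$ (left exact radical by [St, Chap.~VI, Prop.~1.7], with $\Ker(F)$ localizing and $[F]=[F_{\Ker(F)}]$ by [P, Chap.~4, Thm.~4.9]), and checking $\CT_{r_{\Ker(F)}}=\Ker(F)$ and $r_{\Ker(F_r)}=r$. You are essentially re-proving by hand what the paper delegates to those citations, which is fine.

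However, two of your justifications are misattributed, and one, as written, would fail. Closure of $\CT_\gamma$ under extensions does \emph{not} follow from left exactness: the socle functor is a left exact preradical, yet semisimple objects are not closed under extensions. The correct argument uses the radical axiom: given $0\To A\To B\To C\To 0$ with $A,C\in\CT_\gamma$, functoriality gives $A=\gamma(A)\subseteq\gamma(B)$, so $B/\gamma(B)$ is a quotient of $C$ and lies in $\CT_\gamma$, while $\gamma(B/\gamma(B))=0$ forces $B\in\CT_\gamma$. Conversely, your announced ``main obstacle'' is no obstacle at all: $\CT_\gamma$ is closed under arbitrary coproducts for \emph{every} preradical, since each inclusion $M_i\hookrightarrow\bigoplus_j M_j$ carries $\gamma(M_i)=M_i$ into $\gamma(\bigoplus_j M_j)$; neither the Grothendieck hypothesis nor left exactness enters there. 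What left exactness actually buys is closure under subobjects and idempotency (and it is idempotency plus functoriality, not the radical condition, that makes $\gamma(M)$ the largest $\CT_\gamma$-subobject of $M$); what the radical axiom buys is extension-closure. Two smaller points: your first bijection silently uses the fact that a Serre subcategory of a Grothendieck category closed under coproducts is localizing in the paper's sense (existence of the section functor), which is exactly what the citation [P, Chap.~4, Prop.~5.2] covers and is where the injectivity hypothesis is really relevant; by contrast, full faithfulness of the right adjoint $G_\CX$ is automatic for the quotient functor of a localizing subcategory and does not need enough injectives.
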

\begin{proof}
Assume that $r$ is a left exact radical of $\CA$. Then it is clear that for any object $X$ of $\CA$, $r(X)$ is the largest subobject of $X$ belonging to $\CT_r$ and hence $\CT_r$ is a localizing subcategory of $\CA$ by [P, Chap 4, Proposition 5.2]. On the other hand, if $F:\CA\To\CA'$ is an exact functor admitting a full and faithful right adjoint functor, then according to [P, Chap 4, Theorem 4.9], $\Ker(F)$ is a localizing subcategory. Assume that $r_{\Ker(F)}(X)$ is the largest subobject of $X$ belonging to $\Ker(F)$. It follows from [St, Chap VI, Proposition 1.7] that $r_{\Ker(F)}$ is a left exact radical functor.  
Consider $\mathscr{A}=\{r:\CA\To \CA|\hspace{0.1cm} r \hspace{0.1cm}{\rm is\hspace{0.1cm} a \hspace{0.1cm}left\hspace{0.1cm} exact\hspace{0.1cm} radical\hspace{0.1cm} functor}\}$. We define a map $\Phi:\mathscr{A}\To \mathscr{B}$ by $\Phi(r)=[F_r]$, where $F_r:\CA\To\CA/\CT_r$ is the canonical exact functor. We also define $\Theta:\mathscr{B}\To\mathscr{A}$ 
by $\Theta([F])=r_{\Ker(F)}$. One can show that $\Theta\circ\Phi=1_{\mathscr{A}}$ and $\Phi\circ\Theta=1_{\mathscr{B}}$ as $\CT_{r_{\Ker(F)}}=\Ker(F)$ and $r_{\Ker(F_r)}=r$.
\end{proof}

\medskip

Let $E$ be an injective object of $\CA$. Following [V], for any object $C$ of $\CA$, we define 
$$t_E(C)=\bigcap \{\ker f| f\in\Hom_{\CA}(C,E)\}.$$
As according to [V, Proposition 3.2], $t_E$ is a left exact radical functor, $\CT_{t_E}$ is localizing and it is called the localizing subcategory of $\CA$ {\it generated by} $t_E$.

 Let $\alpha\in\ASpec\CA$ and $E=E(\alpha)$.  Ahmadi et al. [AS, Theorem 2.11] showed that the left exact radical functors $t_E$ and $t_{\alpha}$ generate the same localizing subcategories where $t_{\alpha}$ is the left exact radical functor corresponding to $\CX(\alpha)$. To be more precise, $\CT_{t_{E}}=\ASupp^{-1}(\ASpec\CA\setminus\overline{\{\alpha\}})$. In the following proposition we show that $t_E=t_{\alpha}$.

\medskip

\begin{proposition}
Let $E$ be an indecomposable injective object of $\CA$ with $\AAss(E)=\{\alpha\}$. Then $t_E=t_{\alpha}$.
\end{proposition}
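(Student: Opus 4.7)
The plan is to reduce the claim to the already-cited fact that $t_E$ and $t_\alpha$ generate the same localizing subcategory, and then invoke the general principle that a left exact radical functor is completely determined by its torsion class.

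First I would recall that both $t_E$ and $t_\alpha$ are left exact radical functors on $\CA$: $t_E$ by the result of Verhoef referenced in the excerpt, and $t_\alpha$ by construction (it is the left exact radical functor attached to the localizing subcategory $\CX(\alpha)$). Hence for any object $C$ of $\CA$, the objects $t_E(C)$ and $t_\alpha(C)$ are the largest subobjects of $C$ lying, respectively, in $\CT_{t_E}$ and $\CT_{t_\alpha}$, by the characterization of left exact radical functors on Grothendieck categories (used in Proposition~3.1 above via [P, Chap 4, Proposition 5.2]).

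Next I would invoke the Ahmadi--Sazeedeh theorem [AS, Theorem~2.11], which is explicitly recalled in the paragraph preceding the statement: it gives
\[
\CT_{t_E}=\ASupp^{-1}\bigl(\ASpec\CA\setminus\overline{\{\alpha\}}\bigr)=\CX(\alpha)=\CT_{t_\alpha}.
\]
Because the torsion classes coincide and each functor produces the maximal subobject lying in its own torsion class, we deduce $t_E(C)=t_\alpha(C)$ as subobjects of $C$, for every $C\in\CA$. Naturality on morphisms is automatic since both restrictions agree with $f$ on the common subobject, so $t_E=t_\alpha$ as subfunctors of the identity.

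The only place where care is needed is verifying that the characterization ``$r(C)$ equals the largest subobject of $C$ in $\CT_r$'' really applies to $t_E$; this is where the hypothesis that $E$ is indecomposable injective with $\AAss(E)=\{\alpha\}$ (so that $E\cong E(\alpha)$) is used through [AS, Theorem~2.11] to identify $\CT_{t_E}$. Beyond this, no computation is required, and I do not anticipate a genuine obstacle: the proof is essentially a two-line appeal to the bijection between left exact radical functors and localizing subcategories established in Proposition~3.1.
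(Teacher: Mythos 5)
Your proof is correct and follows essentially the same route as the paper: both arguments reduce to the identification $\CT_{t_E}=\CX(\alpha)=\CT_{t_\alpha}$ from [AS, Theorem 2.11] and then use left exactness (the paper via $t_\alpha(t_E(C))=t_E(C)\cap t_\alpha(C)$ and the symmetric inclusion, you via the equivalent ``largest torsion subobject'' characterization). The only cosmetic slip is the attribution of $t_E$ to ``Verhoef''; the reference is Verschoren [V].
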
  
\begin{proof}
For any object $C$, we have $t_E(C)\in \CT_{t_{E}}$ and so by the above argument $t_{\alpha}(t_E(C))=t_E(C)$. Now since $t_{\alpha}$ is left exact, $t_{\alpha}(t_E(C))=t_E(C)\cap t_{\alpha}(C)$ and hence $t_E(C)\subseteq t_{\alpha}(C)$. Symmetrically we have $t_{\alpha}(C)\subseteq t_E(C)$.
\end{proof}

\medskip
 The following lemma is frequently used in this paper. 
\begin{lemma}\label{simp}{\rm([K4, Proposition 3.5])}
Let $\alpha\in\ASpec\CA$. Then $H_{\alpha}$ is simple in $\CA_{\alpha}$ for any monoform object $H$ with $\alpha=\overline{H}$. 
\end{lemma}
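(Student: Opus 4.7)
The plan is to show that $H_\alpha = F(H)$ (where $F:\CA\To\CA_\alpha$ is the quotient functor) has no proper nonzero subobjects. Since $F$ is exact with right adjoint and $\CX(\alpha)$ is localizing, every subobject of $F(H)$ in $\CA_\alpha$ can be lifted to a subobject of $H$ in $\CA$: namely, any subobject $N\hookrightarrow F(H)$ has the form $F(K)$ for some subobject $K\subseteq H$, and $F(K)=0$ iff $K\in\CX(\alpha)$. So it suffices to show that for every nonzero subobject $K\subseteq H$ one has $F(K)\neq 0$ and $F(H/K)=0$; then exactness of $F$ forces $F(K)=F(H)$. Nonvanishing of $F(H)$ itself is immediate since $H\in\alpha$ gives $\alpha\in\ASupp(H)$, so $H\notin\CX(\alpha)$.

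First I would handle the easy direction $F(K)\neq 0$. A nonzero subobject $K$ of a monoform object $H$ is again monoform and atom-equivalent to $H$ (as $K$ is a common nonzero subobject of $K$ and $H$), so $\overline{K}=\alpha$ and $\alpha\in\AAss(K)\subseteq\ASupp(K)$. In particular $\ASupp(K)\not\subseteq\ASpec\CA\setminus\overline{\{\alpha\}}$, i.e.\ $K\notin\CX(\alpha)$, hence $F(K)\neq 0$.

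The main content is showing $H/K\in\CX(\alpha)$, equivalently $\ASupp(H/K)\cap\overline{\{\alpha\}}=\emptyset$. Suppose for contradiction there is $\beta\in\ASupp(H/K)$ with $\beta\leq\alpha$, and pick a monoform $G\in\beta$ that is a subquotient of $H/K$. The specialization $\beta\leq\alpha$ combined with the openness of $\ASupp(G)$ (an open set containing $\beta$) forces $\alpha\in\ASupp(G)$, so $G$ has a monoform subquotient $H''$ with $\overline{H''}=\alpha$. Since $H''$ and $H$ are atom-equivalent, they share a common nonzero subobject $L$, giving an embedding $L\hookrightarrow H$. Now I unwind the subquotient structure: writing $G\cong A/B$ with $K\subseteq B\subseteq A\subseteq H$ (so $B\neq 0$ because $K\neq 0$), and writing $H''\cong C'/D'$ with $B\subseteq D'\subseteq C'\subseteq A$ (so $D'\neq 0$ since $D'\supseteq B$), the composite $L\hookrightarrow H''\cong C'/D'\hookrightarrow H/D'$ exhibits $L$ as a nonzero subobject of the proper quotient $H/D'$ of $H$. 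Together with $L\hookrightarrow H$ this contradicts the monoform property of $H$.

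The main obstacle is precisely this last step: keeping track of the two layers of subquotient (first $G$ inside $H/K$, then $H''$ inside $G$) and certifying that the resulting subobject $D'\subseteq H$ is nonzero, so that the common subobject $L$ of $H$ and $H/D'$ really violates monoformness. Once this bookkeeping is done, combining the two bullets shows any subobject of $F(H)$ is either $0$ or $F(H)$, so $H_\alpha$ is simple in $\CA_\alpha$.
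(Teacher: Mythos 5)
Your proof is correct. Note that the paper itself offers no argument for this lemma: it is quoted verbatim from Kanda [K4, Proposition 3.5], so there is no in-paper proof to compare against; your write-up is a self-contained reconstruction of the standard argument. The skeleton is sound: simplicity of $F(H)$ reduces, via the (standard, citable to [P] or Gabriel) fact that every subobject of $F(H)$ in $\CA/\CX(\alpha)$ is of the form $F(K)$ with $K\subseteq H$ and $\Ker F=\CX(\alpha)$, to showing $F(K)\neq 0$ and $F(H/K)=0$ for every nonzero $K\subseteq H$. Your easy direction is fine (a nonzero subobject of a monoform object is monoform and atom-equivalent to it, so $\alpha\in\ASupp(K)$), and the main step is handled correctly: if $\beta\in\ASupp(H/K)$ with $\beta\leq\alpha$, then openness of $\ASupp(G)$ for a monoform $G\in\beta$ that is a subquotient of $H/K$ forces $\alpha\in\ASupp(G)$, and the two-layer subquotient bookkeeping ($G\cong A/B$ with $0\neq K\subseteq B\subseteq A\subseteq H$, then $H''\cong C'/D'$ with $B\subseteq D'\subseteq C'\subseteq A$, so $D'\neq 0$) exhibits a common nonzero subobject $L$ of $H$ and $H/D'$, contradicting monoformness of $H$. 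This is exactly the delicate point, and you certify $D'\neq 0$ correctly; the only cosmetic improvement would be to attach explicit references for the two background facts you invoke (lifting of subobjects along the quotient functor, and monoformness of nonzero subobjects of monoform objects).
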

\medskip
From [P], a pretorsion class $\CX$ of
the Grothendieck category $\CA$ is called {\it stable} if the
injective envelope in $\CA$ of any object of $\CX$ is also an object
of $\CX$. Further a Grothendieck category is said to be {\it locally
stable} if any its localizing subcategory is stable. We now have the following result.
\medskip
\begin{proposition}\label{stab}
Let $\gamma$ be a left exact preradical functor. Then $\gamma$ preserves injective objects if and only if $\CT_{\gamma}$ is stable.  
\end{proposition}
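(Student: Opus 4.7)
The plan is to lean on two basic properties of a left exact preradical functor $\gamma$. First, left exactness applied to any monomorphism $A\hookrightarrow B$ identifies $\gamma(A)$ with the intersection $A\cap\gamma(B)$ taken inside $B$. Taking $A=\gamma(B)$ gives $\gamma(\gamma(B))=\gamma(B)\cap\gamma(B)=\gamma(B)$, so $\gamma$ is automatically idempotent; in particular $\gamma(M)\in\CT_{\gamma}$ for every object $M$. This intersection formula together with idempotence will drive both implications.

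For the forward direction, suppose $\gamma$ preserves injective objects and let $T\in\CT_{\gamma}$ with injective envelope $E=E(T)$. Applying the intersection formula to $T\hookrightarrow E$ gives $T=\gamma(T)=T\cap\gamma(E)$, so $T\subseteq\gamma(E)$. By hypothesis $\gamma(E)$ is injective, and since $T$ is essential in $E$ it remains essential in the intermediate subobject $\gamma(E)$. Then $\gamma(E)$ is an injective essential extension of $T$ contained in $E=E(T)$, which forces $\gamma(E)=E$ (e.g., write $E=\gamma(E)\oplus K$ using injectivity of $\gamma(E)$; since $K\cap T=0$ and $T$ is essential in $E$, the complement $K$ vanishes). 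Hence $E\in\CT_{\gamma}$, which is precisely stability.

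For the converse, assume $\CT_{\gamma}$ is stable and let $E$ be an injective object. By idempotence, $\gamma(E)\in\CT_{\gamma}$, so stability yields $E(\gamma(E))\in\CT_{\gamma}$. Injectivity of $E$ allows us to embed $E(\gamma(E))$ into $E$ extending the inclusion $\gamma(E)\hookrightarrow E$, giving a chain $\gamma(E)\subseteq E(\gamma(E))\subseteq E$. The intersection formula applied to $E(\gamma(E))\hookrightarrow E$ produces $\gamma(E(\gamma(E)))=E(\gamma(E))\cap\gamma(E)=\gamma(E)$, while membership of $E(\gamma(E))$ in $\CT_{\gamma}$ forces $\gamma(E(\gamma(E)))=E(\gamma(E))$. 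Comparing the two identifications gives $\gamma(E)=E(\gamma(E))$, which is injective.

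I do not expect a serious obstacle here; the only subtlety is making sure the equality $\gamma(A)=A\cap\gamma(B)$ is invoked with a genuine subobject inclusion (so that both sides live canonically in the same ambient object), and verifying that the direct summand argument used in the forward direction is valid in an arbitrary locally noetherian Grothendieck category, which it is since $\CA$ admits injective envelopes with the usual essential/injective complementation behaviour.
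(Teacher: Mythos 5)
Your proof is correct, but it takes a genuinely different route from the paper's. The paper uses the locally noetherian structure theory of injectives in both directions: for the ``only if'' part it writes $E(M)$ as a direct sum of indecomposable injectives $E(\alpha)$ indexed by $\AAss(M)$, reduces stability to monoform objects $H\in\CT_{\gamma}$, and then uses indecomposability of $E(H)$ to force the injective summand $\gamma(E(H))$ to be all of $E(H)$; for the converse it first treats indecomposable injectives (where a nonzero $\gamma(E)$ is automatically essential, so stability gives $E(\gamma(E))=E\in\CT_{\gamma}$) and then handles an arbitrary injective via its Matlis decomposition, splitting the index set according to whether $\gamma(E(\alpha_i))$ equals $E(\alpha_i)$ or $0$ and invoking closure of $\CT_{\gamma}$ under direct sums and of $\CF_{\gamma}$ under subobjects and products. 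You bypass all of this: the identity $\gamma(A)=A\cap\gamma(B)$ for $A\subseteq B$ (the standard characterization of left exact preradicals, which the paper itself uses elsewhere) plus the essential-extension/direct-summand behaviour of injective envelopes gives both implications directly --- in the forward direction $T\subseteq\gamma(E(T))$ is an injective summand whose complement meets the essential subobject $T$ trivially, and in the converse the embedding $\gamma(E)\subseteq E(\gamma(E))\subseteq E$ together with the intersection formula yields $\gamma(E)=E(\gamma(E))$, which is injective (the degenerate case $\gamma(E)=0$ is covered since then $E(\gamma(E))=0$). What your argument buys is economy and generality: it needs only the existence of injective envelopes, so it works in any Grothendieck category without the locally noetherian hypothesis and without Matlis theory. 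What the paper's argument buys is the explicit atomwise information ($\gamma(E(\alpha))$ is $E(\alpha)$ or $0$, and $\gamma(E)$ is exactly the sum of the torsion indecomposable summands), which it immediately reuses in Lemma \ref{monpres} and in later results such as Proposition \ref{asssec}. The only points to make explicit in your write-up are the ones you already flag: $T\subseteq\gamma(E)$ guarantees the summand in question is nonzero where needed, and the extension of $\gamma(E)\hookrightarrow E$ to $E(\gamma(E))$ is monic because $\gamma(E)$ is essential in its envelope.
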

\begin{proof}
In order to prove ''only if'', since $\CA$ is locally noetherian, for any object $M$ of $\CA$, we have 
$E(M)=\bigoplus_{\alpha\in\AAss(M)}\limits E(\alpha)^{(\mu(\alpha))}$, where $E(\alpha)^{(\mu(\alpha))}=\bigoplus_{\mu(\alpha)}\limits E(\alpha)$ and $\mu(\alpha)$ denotes the numbers of $E(\alpha)$ in $E(M)$. Hence it suffices to show that for any monoform object $H\in\CT_{\gamma}$ we have $E(H)\in\CT_{\gamma}$. By the assumption, $\gamma(E(H))$ is injective and so it is a direct summand of $E(H)$. But since $E(H)$ is indecomposable, we have $\gamma(E(H))=E(H)$. To prove the converse, first assume that $E$ is an indecomposable  injective object of $\CA$. If $\gamma(E)=0$, there in nothing to prove. If $\gamma(E)$ is nonzero, it is an essential subobject of $E$, hence $E(\gamma(E))=E\in\CT_{\gamma}$ as $\gamma(E)\in\CT_{\gamma}$. Thus $\gamma(E)=E$. We now assume that $E$ is any injective object of $\CA$. By Matlis structure theorem $E=\bigoplus_{i\in\Lambda}\limits E(\alpha_i)$ where $\alpha_i\in\ASpec\CA$ for each $i$. By the first argument we divide $\Lambda$ to two sets $\Lambda_1=\{i\in\Lambda|\hspace{0.1cm}\gamma(E(\alpha_i))=E(\alpha_i)\}$ and $\Lambda_2=\{i\in\Lambda|\hspace{0.1cm}\gamma(E(\alpha_i))=0\}$ 
and so if we set $E_1=\bigoplus_{i\in\Lambda_1}\limits E(\alpha_i)$ and $E_2=\bigoplus_{i\in\Lambda_2}\limits E(\alpha_i)$, then $E=E_1\oplus E_2$. We observe that since $\CT_{\gamma}$ is closed under arbitrary direct sums, $E_1\in\CT_{\gamma}$ and since $\CF_{\gamma}$ is closed under subobjects and products, $E_2\in\CF_{\gamma}$ and hence $\gamma(E)=E_1$.
\end{proof}
 The following lemma show that if a left exact preradical functor preserves injective objects, the it divides indecomposable injective objects.   
\medskip
\begin{lemma}\label{monpres}
Let $\gamma$ be a left exact preradical functor preserving injective objects and let $\alpha\in\ASpec\CA$. Then the following conditions hold.
\begin{itemize}
		\item [(1)] $\gamma(E(\alpha))$ is either $E(\alpha)$ or zero.
	\item [(2)] $\alpha\subseteq \CT_{\gamma}$ or $\alpha\subseteq \CF_{\gamma}$.
 \end{itemize}
  \end{lemma}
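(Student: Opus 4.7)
For part (1), the plan is to exploit the indecomposability of $E(\alpha)$ together with the hypothesis that $\gamma$ preserves injectivity. Since $\gamma$ is preradical, $\gamma(E(\alpha))$ is a subobject of $E(\alpha)$, and by assumption it is injective. An injective subobject of any object is a direct summand, so $\gamma(E(\alpha))$ is a direct summand of $E(\alpha)$. Because $E(\alpha)$ is indecomposable, the only direct summands are $0$ and $E(\alpha)$ itself, which gives the dichotomy.

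For part (2), I would reduce everything to part (1) by embedding monoforms representing $\alpha$ into $E(\alpha)$. Let $H$ be any monoform object with $\overline{H}=\alpha$; then $E(H)\cong E(\alpha)$, so we may regard $H$ as a subobject of $E(\alpha)$. Since $\gamma$ is left exact, applying $\gamma$ to the inclusion $H\hookrightarrow E(\alpha)$ yields
\[
\gamma(H)=H\cap\gamma(E(\alpha)).
\]
Now I would split into the two cases supplied by (1). If $\gamma(E(\alpha))=E(\alpha)$, then $\gamma(H)=H$, so $H\in\CT_{\gamma}$; since $H$ was an arbitrary element of $\alpha$, this gives $\alpha\subseteq\CT_{\gamma}$. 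If instead $\gamma(E(\alpha))=0$, then $\gamma(H)=0$, so $H\in\CF_{\gamma}$, and again by arbitrariness $\alpha\subseteq\CF_{\gamma}$.

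No step here looks genuinely delicate: the indecomposability of $E(\alpha)$ is standard in locally noetherian Grothendieck categories, and the compatibility $\gamma(H)=H\cap\gamma(E(\alpha))$ is exactly the content of left exactness applied to a monomorphism. The only minor point to keep in mind is the interpretation of the notation $\alpha\subseteq\CT_{\gamma}$ (respectively $\alpha\subseteq\CF_{\gamma}$) as meaning that every monoform representative of $\alpha$ lies in $\CT_{\gamma}$ (respectively $\CF_{\gamma}$); but since the argument works uniformly for every $H\in\alpha$ via the common injective envelope, this causes no difficulty.
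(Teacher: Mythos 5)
Your proof is correct and takes essentially the same route as the paper: part (1) via the injective-subobject-is-a-summand argument plus indecomposability of $E(\alpha)$, and part (2) via $E(H)\cong E(\alpha)$ together with the left-exactness identity $\gamma(H)=H\cap\gamma(E(\alpha))$. If anything, splitting on the value of $\gamma(E(\alpha))$ makes the exhaustiveness of the dichotomy for every representative $H\in\alpha$ slightly more explicit than the paper's phrasing, but the ingredients are identical.
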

\begin{proof}
(1) Since $\gamma(E(\alpha))$ is an injective subobject of $E(\alpha)$, it is a direct summand of $E(\alpha)$ and since $E(\alpha)$ is indecomposable, $\gamma(E(\alpha))=E(\alpha)$ or $\gamma(E(\alpha))=0$. (2) If $H$ is a monoform object in $\CT_{\gamma}$ with $\overline{H}=\alpha$, then $H\subseteq \gamma(E(\alpha))$ and so using the first part $\gamma(E(\alpha))=E(\alpha)$ so that $E(\alpha)\in\CT_{\gamma}$. This implies that $\alpha\subseteq \CT_{\gamma}$. If $H$ is a monoform object in $\CF_{\gamma}$, the equalities $0=\gamma(H)=H\cap\gamma(E)$ and (1) forces that $\gamma(E)=0$. Now, for any monoform object $H'$, since $H'\subseteq E$, we have $\gamma(H')=0$. 
\end{proof}
\medskip
\begin{definition}
Let $\gamma$ be a left exact radical functor. An object $D$ of $\CA$ is called {\it $\gamma$-injective} if $\Ext_{\CA}^1(X,D)=0$ for any $\gamma$-torsion object $X$ of $\CA$. The object $D$ is called {\it faithfully $\gamma$-injective} if $D$ is $\gamma$-injective and $\gamma$-torsion-free. 
\end{definition}
\medskip
We show that any $\gamma$-torsion-free object is embedded in a faithfully $\gamma$-injective object.

\begin{proposition}\label{inj}
Let $\gamma$ be a left exact radical functor and let $M$ be a $\gamma$-torsion-free object of $\CA$. Then there is a faithfully $\gamma$-injective object $D$ containing $M$ such that $D/M$ is $\gamma$-torsion.
\end{proposition}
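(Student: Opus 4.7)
The plan is to construct $D$ as a subobject of the injective envelope $E=E(M)$. Since $\gamma$ is a radical, the quotient $E/M$ has a largest $\gamma$-torsion subobject, namely $\gamma(E/M)$. Let $D$ be the preimage of $\gamma(E/M)$ under the projection $E\twoheadrightarrow E/M$, so that we obtain a short exact sequence
\[
0\To M\To D\To \gamma(E/M)\To 0.
\]
By construction $D/M\cong \gamma(E/M)$ is $\gamma$-torsion, which handles one of the two requirements.

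Next I would verify that $D$ is $\gamma$-torsion-free. Since $\gamma$ is left exact, the inclusion $\gamma(D)\hookrightarrow D$ intersects $M$ in $\gamma(M)=0$ (using that $M$ is $\gamma$-torsion-free). Because $M$ is essential in $E$, it is essential in every subobject containing it, in particular in $D$; hence any nonzero subobject of $D$ meets $M$ nontrivially. So $\gamma(D)=0$, i.e.\ $D$ is $\gamma$-torsion-free.

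For faithful $\gamma$-injectivity it remains to show $\Ext^1_{\CA}(X,D)=0$ for every $\gamma$-torsion object $X$. Consider the short exact sequence $0\To D\To E\To E/D\To 0$. Since $\gamma$ is a radical, $E/D\cong (E/M)/\gamma(E/M)$ is $\gamma$-torsion-free. Applying $\Hom_{\CA}(X,-)$ yields the exact sequence
\[
\Hom_{\CA}(X,E/D)\To \Ext^1_{\CA}(X,D)\To \Ext^1_{\CA}(X,E),
\]
in which the right-hand term vanishes by injectivity of $E$, and the left-hand term vanishes because any morphism from a $\gamma$-torsion object to a $\gamma$-torsion-free object is zero (its image would be a $\gamma$-torsion subobject of a $\gamma$-torsion-free object). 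Hence $\Ext^1_{\CA}(X,D)=0$, so $D$ is $\gamma$-injective, and combined with being $\gamma$-torsion-free, it is faithfully $\gamma$-injective.

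The only genuinely delicate step is the $\gamma$-torsion-freeness of $D$, which uses three ingredients together: the essentialness of $M$ in $E$ (hence in $D$), the left exactness of $\gamma$, and the $\gamma$-torsion-freeness of $M$. The rest is formal manipulation of the torsion theory associated to a left exact radical functor and the long exact Ext sequence.
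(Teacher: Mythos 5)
Your construction of $D$ as the preimage of $\gamma(E/M)$ in $E=E(M)$ is exactly the paper's pullback, and your three verifications (that $D/M\cong\gamma(E/M)$ is torsion, that $D$ is torsion-free via the essentiality of $M$ in $E$ together with left exactness, and that $\Ext^1_{\CA}(X,D)=0$ from the sequence $0\To D\To E\To E/D\To 0$ with $E/D\cong(E/M)/\gamma(E/M)$ torsion-free by radicality) coincide with the paper's argument, so the proof is correct and essentially the same. One small attribution quibble: the fact that $\gamma(E/M)$ is itself $\gamma$-torsion follows from idempotency of the left exact functor $\gamma$ rather than from radicality, but this does not affect the argument.
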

\begin{proof}
Suppose that $E$ is the injective envelope of $M$ and $X=E/M$ and also $K=X/\gamma(X)$. Then the exact sequences $0\To M\To E\To X\To 0$ and $0\To\gamma(X)\To X\To K\To 0$ of objects induce the following pull back diagram

\begin{center}\setlength{\unitlength}{4cm}
\begin{picture}(4.5,1.5)

\put(0.3,0.5){\makebox(0,0){$0$}}
 \put(0.8,0.5){\makebox(0,0){$M$}}
\put(1.3,0.5){\makebox(0,0){$D$}}
 \put(1.8,0.5){\makebox(0,0){$\gamma(X)$}}
\put(2.3,0.5){\makebox(0,0){$0$}}

\put(0.4,0.5){\vector(1,0){0.3}}
\put(0.9,0.5){\vector(1,0){0.3}}
 \put(1.4,0.5){\vector(1,0){0.25}}
\put(1.95,0.5){\vector(1,0){.3}}

\put(0.3,0.9){\makebox(0,0){$0$}}
 \put(0.8,0.9){\makebox(0,0){$M$}}
\put(1.3,0.9){\makebox(0,0){$E$}}
 \put(1.8,0.9){\makebox(0,0){$X$}}
\put(2.3,0.9){\makebox(0,0){$0$}}

\put(0.4,0.9){\vector(1,0){0.3}}
\put(0.9,0.9){\vector(1,0){0.3}}
 \put(1.4,0.9){\vector(1,0){0.3}}
\put(1.95,0.9){\vector(1,0){.3}}

 \put(0.8,0.5850891){\line(0,1){0.25}}
 \put(0.78,0.5850891){\line(0,1){0.25}}
 \put(1.3,0.5850891){\vector(0,1){0.25}}
\put(1.8,0.585091){\vector(0,1){0.25}}

\put(1.3,0.2){\makebox(0,0){$0$}}
\put(1.8,0.2){\makebox(0,0){$0$}}

\put(1.3,0.25){\vector(0,1){0.2}}
\put(1.8,0.25){\vector(0,1){0.2}}

\put(1.3,0.9535){\vector(0,1){0.235}}
\put(1.8,0.9535){\vector(0,1){0.235}}

\put(1.3,1.25){\makebox(0,0){$K$}}
\put(1.8,1.25){\makebox(0,0){$K$}}

\put(1.35,1.25){\line(1,0){0.3654}}
\put(1.35,1.232){\line(1,0){0.3654}}

\put(1.3,1.32){\vector(0,1){0.2}}
\put(1.8,1.32){\vector(0,1){0.2}}

\put(1.3,1.576){\makebox(0,0){$0$}}
\put(1.8,1.576){\makebox(0,0){$0$}}
\end{picture}
\end{center}

We observe that $E$ is $\gamma$-torsion-free  and then so is $D$. On the other hand, for any $\gamma$-torsion object $N$, applying the functor $\Hom_{\CA}(N,-)$ to the exact sequence $0\To D\To E\To K\To 0$, we deduce that $\Ext_{\CA}^1(N,D)=0$.
\end{proof}

\medskip

\begin{theorem}\label{adjinj}
Let $\gamma$ be a left exact radical functor with the corresponding torsion theory $(\CT_{\gamma},\CF_{\gamma})$. If $M$ is a $\gamma$-torsion-free object of $\CA$, then, up to isomorphisms, $GF(M)$ is the smallest faithfully $\gamma$-injective object containing $M$ where $F:\CA\To \CA/\CT_{\gamma}$ is the canonical functor with its right adjoint functor $G:\CA/\CT_{\gamma}\To \CA$.  
\end{theorem}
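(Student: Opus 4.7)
The plan is to construct a faithfully $\gamma$-injective object containing $M$ by invoking Proposition \ref{inj}, and then to identify it canonically with $GF(M)$ via the adjunction unit $\eta\colon 1_{\CA}\To GF$.

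First, Proposition \ref{inj} yields a faithfully $\gamma$-injective object $D$ together with a monomorphism $M\hookrightarrow D$ whose cokernel $D/M$ lies in $\CT_{\gamma}$. Since $F$ is exact with $\Ker(F)=\CT_{\gamma}$, the short exact sequence $0\To M\To D\To D/M\To 0$ induces an isomorphism $F(M)\To F(D)$ in $\CA/\CT_{\gamma}$, and applying $G$ gives $GF(M)\cong GF(D)$.

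The key step is to show that for any faithfully $\gamma$-injective object $D$ the unit $\eta_D\colon D\To GF(D)$ is an isomorphism. By the standard properties of Gabriel localization (see [P, Chap 4] and [St, Chap IX]), both $\Ker(\eta_N)$ and $\Coker(\eta_N)$ belong to $\CT_{\gamma}$ for every object $N$; hence when $D$ is $\gamma$-torsion-free, $\eta_D$ is monic and $\Coker(\eta_D)\in\CT_{\gamma}$. Applying $\Hom_{\CA}(-,D)$ to $0\To D\To GF(D)\To\Coker(\eta_D)\To 0$, the vanishing $\Ext_{\CA}^{1}(\Coker(\eta_D),D)=0$ (coming from the $\gamma$-injectivity of $D$) furnishes a retraction of $\eta_D$, which combined with $\eta_D$ being monic forces $\Coker(\eta_D)=0$. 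Combined with the first step, $GF(M)\cong D$; in particular $GF(M)$ is faithfully $\gamma$-injective and contains $M$ via $\eta_M$, which is monic because $\Ker(\eta_M)\in\CT_{\gamma}\cap\CF_{\gamma}=0$.

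For the smallest property, suppose $D'$ is any faithfully $\gamma$-injective object containing $M$ via a monomorphism $i\colon M\hookrightarrow D'$. Since $D'\cong GF(D')$ by the key step, the adjunction supplies a unique morphism $\phi\colon GF(M)\To D'$ with $\phi\circ\eta_M=i$. Because $F(\eta_M)$ is an isomorphism by the triangle identities, $GF(M)/\im(\eta_M)\in\CT_{\gamma}$; hence $\Ker(\phi)\cap\im(\eta_M)=0$ and $\Ker(\phi)$ embeds into $GF(M)/\im(\eta_M)$, so $\Ker(\phi)\in\CT_{\gamma}$. Since $\Ker(\phi)\subseteq GF(M)$ is also $\gamma$-torsion-free, $\Ker(\phi)=0$, and $\phi$ is the desired embedding extending $i$. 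The principal obstacle is the identification $D\cong GF(D)$ for faithfully $\gamma$-injective $D$, which is precisely the characterization of Gabriel-closed objects; once this is in place, everything else is routine bookkeeping with the exactness of $F$ and the triangle identities of the adjunction.
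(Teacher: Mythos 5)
Your proof is correct, but you handle the central identification differently from the paper. The paper realizes everything concretely inside the injective envelope: from the construction in Proposition \ref{inj} one has $D\subseteq E(M)$, and [K2, Theorem 5.11] then gives $D\subseteq GF(M)$; the sequence $0\To D\To GF(M)\To GF(M)/D\To 0$ splits because $GF(M)/D$ is $\gamma$-torsion and $D$ is $\gamma$-injective, whence $D=GF(M)$, and for minimality the inclusion $M\To D_1$ is extended along $0\To M\To GF(M)\To X\To 0$ (with $X$ torsion) and shown monic using that $M$ is essential in $GF(M)\subseteq E(M)$. You instead work purely with the adjunction formalism: the unit $\eta_N$ has kernel and cokernel in $\CT_{\gamma}$, so for a faithfully $\gamma$-injective $D$ the sequence $0\To D\To GF(D)\To\Coker(\eta_D)\To 0$ splits by $\Ext^1$-vanishing and $\eta_D$ is an isomorphism (the characterization of Gabriel-closed objects), while $GF(M)\cong GF(D)$ follows from exactness of $F$; minimality comes from the universal property of the unit, with injectivity of the comparison map deduced from its kernel being torsion inside the torsion-free object $GF(M)$. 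Your route avoids the embedding into $E(M)$ and the citation of [K2, Theorem 5.11], at the cost of invoking the standard localization facts $\Ker(\eta_N),\Coker(\eta_N)\in\CT_{\gamma}$ and the $\gamma$-torsion-freeness of $GF(N)$; one small point to tighten is that ``a retraction plus monicity forces $\Coker(\eta_D)=0$'' needs one extra word -- the splitting exhibits $\Coker(\eta_D)$ as a torsion direct summand of the torsion-free object $GF(D)$, hence zero -- but this is the same level of brevity as the paper's own splitting step, so it is not a genuine gap.
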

\begin{proof}
According to Proposition \ref{inj} there exists a faithfully $\gamma$-injective object $D$ containing $M$ such that $D/M$ is $\gamma$-torsion. By the proof of the above proposition, $D$ is contained in $E(M)$ and so it follows from [K2, Theorem 5.11] that $D\subseteq GF(M)$. On the other hand, since $GF(M)/D$ is a quotient of $GF(M)/M$, it is $\gamma$-torsion. Thus the exact sequence $0\To D\To GF(M)\To GF(M)/D\To 0$ splits so that $D=GF(M)$. In order to prove that $GF(M)$ is the smallest faithfully $\gamma$-injective object containing $M$, assume that $D_1$ is any faithfully $\gamma$-injective object containing $M$ with the inclusion morphism $f:M\To D_1$. By the first part, there exists an exact sequence of objects 
$0\To M\stackrel{h}\To GF(M)\To X\To 0$ such that $X=GF(M)/M$ is $\gamma$-torsion. Then applying the functor $\Hom_{\CA}(-,D_1)$ to the short exact sequence, there exists a morphism $\theta:GF(M)\To D_1$ such that $\theta\circ h=f$ and since $M$ is an essential subobject of $GF(M)$, it is clear that $\theta$ is injective.  
\end{proof}

\medskip
For any atom $\alpha\in\ASpec\CA$, according to [K3, Theorem 3.6], the indecomposable injective object $E(\alpha)$ contains a unique maximal monoform subobject $H(\alpha)$. We now have the following corollary about $H(\alpha)$.

\begin{corollary}
For any $\alpha\in\ASpec\CA$, $H(\alpha)$ is a faithfully $t_{\alpha}$-injective object.
\end{corollary}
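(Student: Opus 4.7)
The plan is to apply Theorem \ref{adjinj} with $\gamma = t_\alpha$ and $M = H(\alpha)$, and then to show that the resulting smallest faithfully $t_\alpha$-injective object $GF(H(\alpha))$ coincides with $H(\alpha)$ itself. The torsion-freeness hypothesis is immediate: any nonzero subobject $K \subseteq H(\alpha)$ is monoform with $\overline{K} = \alpha$, so $\alpha \in \ASupp(K) \cap \overline{\{\alpha\}}$, whence $K \notin \CX(\alpha)$. Writing $N := GF(H(\alpha))$, the construction used in Proposition \ref{inj} and Theorem \ref{adjinj} places $N$ as $H(\alpha) \subseteq N \subseteq E(H(\alpha))$ with $N/H(\alpha) \in \CX(\alpha)$; since $H(\alpha)$ is a nonzero subobject of the indecomposable injective $E(\alpha)$ it is essential in $E(\alpha)$, so $E(H(\alpha)) = E(\alpha)$. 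By [K3, Theorem 3.6], $H(\alpha)$ is the unique maximal monoform subobject of $E(\alpha)$, so once I prove $N$ is monoform I obtain $N \subseteq H(\alpha)$, whence $N = H(\alpha)$ and the corollary follows.

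The crux is therefore the monoformness of $N$, which I would prove by contradiction. Suppose there exist nonzero subobjects $K, L \subseteq N$ and an injection $\phi : L \hookrightarrow N/K$. Since $H(\alpha)$ is essential in $N$, both $K \cap H(\alpha)$ and $L \cap H(\alpha)$ are nonzero, and being nonzero subobjects of the monoform object $H(\alpha)$ they are themselves monoform of class $\alpha$. The restriction $\phi|_{L \cap H(\alpha)}$ embeds a monoform in $\alpha$ into $N/K$, forcing $\alpha \in \AAss(N/K)$. On the other hand, consider the short exact sequence
\begin{equation*}
0 \to H(\alpha)/(K \cap H(\alpha)) \to N/K \to N/(K + H(\alpha)) \to 0.
\end{equation*}
The third term is a quotient of $N/H(\alpha) \in \CX(\alpha)$, hence in $\CX(\alpha)$, so $\alpha \notin \AAss(N/(K + H(\alpha)))$. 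The first term admits no monoform subobject in $\alpha$ by monoformness of $H(\alpha)$: any such subobject would share a nonzero subobject with $H(\alpha)$, producing a nonzero subobject of $H(\alpha)$ isomorphic to a subobject of $H(\alpha)/(K \cap H(\alpha))$, contradicting monoformness of $H(\alpha)$. Using the standard inclusion $\AAss(N/K) \subseteq \AAss(H(\alpha)/(K \cap H(\alpha))) \cup \AAss(N/(K + H(\alpha)))$ for a short exact sequence then yields $\alpha \notin \AAss(N/K)$, the desired contradiction.

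The main obstacle is the monoformness argument: the subtle point is that the witnessing injection $\phi$ in the definition of monoformness is an arbitrary injection into $N/K$, not the canonical one induced by the quotient $N \to N/K$, so one really needs to exploit the essentialness of $H(\alpha)$ in $N$ and track associated atoms on both ends of the displayed sequence. The remainder—verifying $t_\alpha$-torsion-freeness, invoking Theorem \ref{adjinj}, and closing the argument via maximality of $H(\alpha)$ among monoform subobjects of $E(\alpha)$—is essentially formal.
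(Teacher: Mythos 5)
Your proposal is correct, and it is more detailed than the paper's own argument, which is essentially two lines: the paper notes that $H(\alpha)$ is $t_\alpha$-torsion-free and then asserts $H(\alpha)=GF(H(\alpha))$ as clear, after which Theorem \ref{adjinj} finishes the job. The whole content of the corollary is precisely that identity, and you actually prove it: you show that $N=GF(H(\alpha))$ sits between $H(\alpha)$ and $E(\alpha)$ with torsion quotient $N/H(\alpha)$, prove $N$ is monoform by the associated-atoms argument on the sequence $0\to H(\alpha)/(K\cap H(\alpha))\to N/K\to N/(K+H(\alpha))\to 0$ (using essentialness of $H(\alpha)$ in $N$, the inclusion $\AAss(B)\subseteq\AAss(A)\cup\AAss(C)$, and that the cokernel lies in $\CX(\alpha)$), and then invoke the maximality of $H(\alpha)$ among monoform subobjects of $E(\alpha)$ from [K3, Theorem 3.6]. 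All the individual steps check out: torsion-freeness of $H(\alpha)$, $E(H(\alpha))=E(\alpha)$ since nonzero subobjects of an indecomposable injective are essential, and the exclusion of $\alpha$ from the associated atoms of both outer terms. An alternative, shorter way to close the same gap — and probably what the authors had in mind, since they use it in Corollary \ref{minas} — is to cite [K2, Lemma 5.14]: since $t_\alpha(H(\alpha))=0$, the object $G(H(\alpha)_\alpha)=GF(H(\alpha))$ is monoform and contains $H(\alpha)$, and maximality of $H(\alpha)$ in $E(\alpha)$ then gives the equality; your monoformness argument is in effect a self-contained proof of that lemma in this special case, which buys independence from the citation at the cost of length.
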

\begin{proof}
It is clear that $H(\alpha)$ is $t_{\alpha}$-torsion free and so $H(\alpha)=GF(H(\alpha))$. Therefore the assertion follows by the previous theorem. 
\end{proof}

\medskip

\begin{definition}
Let $M$ be an object of $\CA$ and $U$ be an open subset of $\ASpec\CA$. The {\it torsion subobject of $M$ with respect to $U$} , denoted by 
$\theta_M:\Gamma_U(M)\To M$, is the largest subobject of $M$ such that $\ASupp(\Gamma_U(M))\subset U$. We notice that $\Gamma_U$ is called a {\it section functor on $\CA$ with respect to} $U$. According to the definition, it is clear that $\Gamma_U$ is an idempotent functor (i.e. $\Gamma_U^2=\Gamma_U$).
\end{definition}

For a Grothendieck category, it is well known that there is a bijection between the left exact preradical functors and the hereditary 
pretorsion classes, and the left exact radical functors correspond to the hereditary torsion classes (see, for example, [St, Chap. VI, Corollary 1.8]). The bijection between the hereditary torsion classes (also called localizing subcategories) and open subsets of the atom spectrum is explicitly described in [K2, Theorem 5.10]. The section functor $\Gamma_U$ with respect to an open subset $U$ of $\CA$ can be obtained by combining these two bijections and so we deduce that $\Gamma_U$ is a left exact radical functor.

\medskip
\begin{lemma}\label{gammon}
Let $U$ be an open subset of $\ASpec\CA$ such that $\Gamma_U$ preserves injective objects. Then $\Gamma_U(E(\alpha))=E(\alpha)$ if and only if $\alpha\in U$.
\end{lemma}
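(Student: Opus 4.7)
The plan is to reduce both implications to the identification of $\CT_{\Gamma_U}$ with the class $\ASupp^{-1}(U)$ (already in place from the paragraph following Definition of $\Gamma_U$, via the bijection [K2, Theorem 5.10]), and then to use Lemma \ref{monpres}(1) to handle the dichotomy between $\Gamma_U(E(\alpha))=E(\alpha)$ and $\Gamma_U(E(\alpha))=0$.

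For the forward direction, I would argue as follows. If $\Gamma_U(E(\alpha))=E(\alpha)$, then $E(\alpha)\in\CT_{\Gamma_U}$, hence $\ASupp(E(\alpha))\subseteq U$. Because $E(\alpha)$ contains the maximal monoform subobject $H(\alpha)$ with $\overline{H(\alpha)}=\alpha$, we have $\alpha\in\AAss(E(\alpha))\subseteq\ASupp(E(\alpha))$, giving $\alpha\in U$. This direction requires no hypothesis beyond the definitions.

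For the converse, assume $\alpha\in U$. Since $U$ is open, I would pick a monoform $H\in\alpha$ with $\ASupp(H)\subseteq U$. Then $H$ and $H(\alpha)$ are atom equivalent, so they share a common nonzero subobject $H'$, which sits inside $H(\alpha)\subseteq E(\alpha)$. From $\ASupp(H')\subseteq\ASupp(H)\subseteq U$ I get $H'\in\CT_{\Gamma_U}$, and therefore $H'\subseteq\Gamma_U(E(\alpha))$. In particular $\Gamma_U(E(\alpha))\ne 0$. At this point I invoke Lemma \ref{monpres}(1), which uses the assumption that $\Gamma_U$ preserves injective objects, to conclude $\Gamma_U(E(\alpha))=E(\alpha)$.

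The only subtlety, and the place where the hypothesis on $\Gamma_U$ is actually used, is the final step of the reverse implication: without injective-preservation, one could only say $\Gamma_U(E(\alpha))$ is a nonzero essential subobject of $E(\alpha)$, not all of it. So the main (and really only) obstacle is to justify the jump from ``nonzero'' to ``all of $E(\alpha)$'', and that is precisely what Lemma \ref{monpres}(1) supplies. Everything else is a direct unwinding of the definitions of $\Gamma_U$, openness of $U$, and atom equivalence of monoform representatives.
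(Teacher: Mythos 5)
Your proof is correct and follows essentially the same route as the paper: openness of $U$ yields a $\Gamma_U$-torsion monoform in $\alpha$, which embeds in $E(\alpha)$ to make $\Gamma_U(E(\alpha))$ nonzero, and Lemma \ref{monpres}(1) (where injective-preservation is used) upgrades this to $\Gamma_U(E(\alpha))=E(\alpha)$, while the forward direction is just $\alpha\in\ASupp(E(\alpha))$. The only difference is that you make explicit, via the common subobject $H'$ of $H$ and $H(\alpha)$, how the torsion monoform sits inside $E(\alpha)$, a step the paper leaves implicit.
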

\begin{proof}
If $\alpha\in U$, there exists a monoform object $H$ such that $\ASupp(H)\subseteq U$. Therefore $\Gamma_U(H)=H$ so that $H\subseteq\Gamma_U(E(\alpha))$. Therefore it follows from Lemma \ref{monpres} that $\Gamma_U(E(\alpha))=E(\alpha)$. The converse is clear as $\alpha\in\ASupp(E(\alpha))$.
\end{proof}

\medskip
\begin{definition}\label{dd}
For every left exact preradical functor $\gamma$ on $\CA$, we define $$U_{\gamma}=\{\alpha\in\ASpec\CA|\hspace{0.1cm} H\in\CT_{\gamma}\hspace{0.1cm} {\rm for}\hspace{0.1cm} {\rm some}\hspace{0.1cm} {\rm monoform} \hspace{0.1cm}{\rm object}\hspace{0.1cm} H\in\alpha\}.$$
\end{definition}
Clearly $U_{\gamma}=\ASupp(\CT_{\gamma})$ and if $\gamma$ preserves injective objects, it is clear by Lemma \ref{monpres} that $$U_{\gamma}=\{\alpha\in\ASpec\CA|\hspace{0.1cm} H\in\CT_{\gamma}\hspace{0.1cm} {\rm for}\hspace{0.1cm} {\rm every}\hspace{0.1cm} {\rm monoform} \hspace{0.1cm}{\rm object}\hspace{0.1cm} H\in\alpha\}$$$$=\{\alpha\in\ASpec\CA|\hspace{0.1cm} E(\alpha)\in\CT_{\gamma}\hspace{0.1cm}\}.$$ 

\medskip
\begin{lemma}
Let $\gamma$ be a left exact preradical functor on $\CA$. Then $U_{\gamma}$ is an open subset of $\ASpec\CA$.
\end{lemma}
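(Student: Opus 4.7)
The plan is to unwind the definition of an open subset of $\ASpec\CA$: given $\alpha\in U_{\gamma}$, I must exhibit a monoform object $H\in\alpha$ such that $\ASupp(H)\subseteq U_{\gamma}$. The candidate is staring at us: by the very definition of $U_{\gamma}$, there exists a monoform $H\in\alpha$ with $H\in\CT_{\gamma}$. So the whole content of the lemma reduces to showing $\ASupp(H)\subseteq U_{\gamma}$ for that particular $H$.

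To verify this, pick any $\beta\in\ASupp(H)$. By the definition of $\ASupp$, there exist subobjects $K\subseteq L\subseteq H$ and a monoform representative $H'\in\beta$ isomorphic to $L/K$. I would then argue that $H'\in\CT_{\gamma}$, which immediately gives $\beta\in U_{\gamma}$ and finishes the proof.

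The key auxiliary fact is that the pretorsion class $\CT_{\gamma}$ attached to a left exact preradical is closed under subobjects and quotients. Closure under quotients is a general property of any preradical: if $\gamma(M)=M$ and $M\twoheadrightarrow M/N$, functoriality shows $\gamma(M/N)=M/N$. Closure under subobjects uses left exactness: for $N\hookrightarrow M\in\CT_{\gamma}$, left exactness yields $\gamma(N)=\gamma(M)\cap N=M\cap N=N$. Applying these two closure properties in turn to the chain $H\supseteq L\twoheadrightarrow L/K\cong H'$ gives $H'\in\CT_{\gamma}$.

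I do not expect a serious obstacle here; the statement is essentially a bookkeeping consequence of the hereditary-pretorsion character of $\CT_{\gamma}$ and the definition of the topology on $\ASpec\CA$. The only spot that warrants a line of justification is the subobject-closure of $\CT_{\gamma}$, since it is precisely there that the left exactness hypothesis on $\gamma$ is used; without it the argument would break down and $U_{\gamma}$ need not be open.
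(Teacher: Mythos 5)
Your proof is correct and follows essentially the same route as the paper: exhibit the torsion monoform $H\in\alpha$ and show every atom in $\ASupp(H)$ has a monoform representative in $\CT_{\gamma}$, using closure of $\CT_{\gamma}$ under quotients (functoriality of $\gamma$) and under subobjects (left exactness). The only difference is cosmetic — you present the subquotient as a quotient of a subobject while the paper takes a subobject of a quotient — and you spell out the closure properties the paper merely invokes.
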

\begin{proof}
Let $\alpha\in U_{\gamma}$. Then there exists a monoform object $H$ of $\CA$ such that $\alpha=\overline{H}$ and $\gamma(H)=H$ (i.e. $H\in\CT_{\gamma}$). We claim that $\ASupp(H)\subset U_{\gamma}$. Given $\beta\in\ASupp(H)$, there exists a monoform object $G$ of $\CA$ and a subobject $K$ of $H$ such that $G$ is a subobject of $H/K$ and $\overline{G}=\beta$. We notice that $H/K\in\CT_{\gamma}$ and since $\gamma$ is left exact, $\CT_{\gamma}$ is closed under subobjects so that $G\in\CT_{\gamma}$. Therefore $\beta\in U_{\gamma}$.
\end{proof}

\medskip
\begin{lemma}\label{ext}
If $\gamma$ is a left exact preradical functor on $\CA$ preserving injective objects, then $\CT_{\gamma}$ is closed under extensions. 
\end{lemma}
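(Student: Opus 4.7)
The plan is to show that for any short exact sequence $0 \to M' \to M \to M'' \to 0$ with $M', M'' \in \CT_\gamma$, the middle object $M$ also lies in $\CT_\gamma$. The strategy is to embed $M$ into its injective envelope $E(M)$, to prove that $E(M) \in \CT_\gamma$ using the injective-preservation hypothesis together with the Matlis-type decomposition available in a locally noetherian Grothendieck category, and then to exploit hereditariness of $\CT_\gamma$.

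First, I would observe that $\ASupp(M) = \ASupp(M') \cup \ASupp(M'')$, and since $M', M'' \in \CT_\gamma$, both pieces lie in $U_\gamma = \ASupp(\CT_\gamma)$. In particular $\AAss(M) \subseteq U_\gamma$. Because $\gamma$ preserves injective objects, the equivalent description of $U_\gamma$ recorded in the paragraph following Definition \ref{dd} yields
$$U_\gamma = \{\alpha \in \ASpec\CA \mid E(\alpha) \in \CT_\gamma\}.$$
Consequently $E(\alpha) \in \CT_\gamma$ for every $\alpha \in \AAss(M)$.

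Next, invoking the Matlis-type decomposition (already used in the proof of Proposition \ref{stab}), I write
$$E(M) = \bigoplus_{\alpha \in \AAss(M)} E(\alpha)^{(\mu(\alpha))}.$$
Since $\CT_\gamma$ is closed under arbitrary direct sums (a routine consequence of assembling the canonical inclusions $T_i \hookrightarrow \gamma(\bigoplus_j T_j)$ supplied by functoriality), this forces $E(M) \in \CT_\gamma$. As $M \subseteq E(M)$ and the left exactness of $\gamma$ implies that $\CT_\gamma$ is closed under subobjects, we conclude $M \in \CT_\gamma$.

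The main obstacle is the passage from $\alpha \in U_\gamma$ to $E(\alpha) \in \CT_\gamma$, which is exactly where the injective-preservation hypothesis enters through Lemma \ref{monpres}. Without this hypothesis, one would only know that some monoform object in $\alpha$ is $\gamma$-torsion, which is insufficient to ensure that each indecomposable direct summand of $E(M)$ lies in $\CT_\gamma$, and the proof would break down at the decomposition step.
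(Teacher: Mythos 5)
Your proof is correct, and all the ingredients you invoke (Lemma \ref{monpres}, Definition \ref{dd} and the description $U_{\gamma}=\{\alpha\mid E(\alpha)\in\CT_{\gamma}\}$, the Matlis decomposition in a locally noetherian category, closure of $\CT_{\gamma}$ under direct sums and under subobjects) are available before this lemma in the paper, so there is no circularity; the one unstated input is the additivity $\ASupp(M)=\ASupp(M')\cup\ASupp(M'')$ on short exact sequences, which is a standard fact of Kanda's theory ([K3, Proposition 3.3]) and is harmless to cite. However, your route is genuinely different from the paper's. The paper argues directly: for any $X\in\CT_{\gamma}$, the subobject $\gamma(E(X))$ is injective (by hypothesis) and contains the essential subobject $X$, hence is a direct summand of $E(X)$ equal to $E(X)$, so $E(X)\in\CT_{\gamma}$; then, given $0\to N\to M\to M/N\to 0$ with ends torsion, the horseshoe lemma embeds $M$ into $E(N)\oplus E(M/N)\in\CT_{\gamma}$, and left exactness (closure under subobjects) finishes. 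That argument needs only finite direct sums and no atom-theoretic bookkeeping. Your argument instead passes through the atom spectrum: you show $\AAss(M)\subseteq U_{\gamma}$ and decompose $E(M)$ into indecomposables $E(\alpha)$ with $\alpha\in U_{\gamma}$, each of which lies in $\CT_{\gamma}$ by Lemma \ref{monpres}. This leans more heavily on local noetherianity (for the Matlis decomposition) and on Kanda's support formalism, but it actually proves a stronger statement, namely $\ASupp^{-1}(U_{\gamma})\subseteq\CT_{\gamma}$ whenever $\gamma$ is a left exact preradical preserving injectives, which anticipates the equality $\gamma=\Gamma_{U_{\gamma}}$ of Theorem \ref{radgam}; the extension-closure of the lemma then drops out since $\ASupp^{-1}$ of an open set is automatically closed under extensions. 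In short: the paper's proof is the more elementary and self-contained one, yours buys a sharper intermediate result at the cost of heavier machinery.
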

\begin{proof}
Given an object $X$ in $\CT_{\gamma}$, since by the assumption $\gamma(E(X))$ is an injective object containing $X$, we have $\gamma(E(X))=E(X)$ so that $E(X)\in\CT_{\gamma}$. Now, assume that $0\To N\To M\To M/N\To 0$ is an exact sequence of objects of $\CA$ such that $N,M/N\in\CT_{\gamma}$. By the first argument, $E(N), E(M/N)\in\CT_{\gamma}$ and using horseshoe lemma, $M$ is a subobject of $E(N)\oplus E(M/N)$. Finally since $\gamma$ is left exact, $\CT_{\gamma}$ is closed under subobjects so that $M\in\CT_{\gamma}$.
\end{proof}

\medskip

\begin{corollary}\label{coc}
If $\gamma$ is a left exact preradical functor preserving injective objects, then $\gamma$ is radical.
\end{corollary}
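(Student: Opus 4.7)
The goal is to show $\gamma(M/\gamma(M))=0$ for every object $M$ of $\CA$. My plan is to exhibit a subobject of $M$ lying in $\CT_{\gamma}$ that witnesses this vanishing via the stability of $\CT_{\gamma}$ under extensions established in Lemma \ref{ext}.

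First, I would set $T=\gamma(M/\gamma(M))$ and let $N$ be the pullback of $T\subseteq M/\gamma(M)$ along the projection $\pi:M\To M/\gamma(M)$, so that there is a short exact sequence
\[
0\To \gamma(M)\To N\To T\To 0.
\]
Both $\gamma(M)$ and $T$ are $\gamma$-torsion by construction. Applying Lemma \ref{ext}, which tells us that $\CT_{\gamma}$ is closed under extensions whenever $\gamma$ is a left exact preradical preserving injectives, I conclude that $N\in\CT_{\gamma}$, i.e., $\gamma(N)=N$.

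Next I would invoke the standard formula for left exact preradicals on subobjects: for $N\subseteq M$ one has $\gamma(N)=N\cap \gamma(M)$. This follows because left exactness of $\gamma$ gives $\gamma(N)\subseteq \gamma(M)\cap N$, while $\gamma(M)\cap N$ is a subobject of $\gamma(M)\in\CT_{\gamma}$, hence itself in $\CT_{\gamma}$ (subobject-closure of $\CT_{\gamma}$ for left exact $\gamma$), so $\gamma(M)\cap N\subseteq \gamma(N)$. Applied to our situation, and using $\gamma(M)\subseteq N$, this yields $N=\gamma(N)=N\cap \gamma(M)=\gamma(M)$. Therefore $T=N/\gamma(M)=0$, as required.

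The only step that requires a little care is the subobject formula $\gamma(N)=N\cap\gamma(M)$; everything else is a direct application of Lemma \ref{ext}. I expect no serious obstacle, since the proof is essentially the reduction of the radical property to closure under extensions, and that closure is precisely what Lemma \ref{ext} furnishes.
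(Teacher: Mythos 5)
Your proof is correct and follows essentially the same route as the paper: form the preimage $N$ of $\gamma(M/\gamma(M))$ in $M$, apply Lemma \ref{ext} to the extension $0\To\gamma(M)\To N\To\gamma(M/\gamma(M))\To 0$ to get $N\in\CT_{\gamma}$, and conclude $N=\gamma(M)$. The only difference is that you spell out explicitly (via the subobject formula $\gamma(N)=N\cap\gamma(M)$ for left exact preradicals) the final step that the paper leaves implicit, namely why $N\in\CT_{\gamma}$ forces $N=\gamma(M)$.
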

\begin{proof}
Let $M\in\CA$ and let $\gamma(M/\gamma(M))=N/\gamma(M)$. Then we have the following exact sequence of objects of $\CA$
$$0\To \gamma(M)\To N\To N/\gamma(M)\To 0.$$ Lemma \ref{ext} implies that $N\in\CT_{\gamma}$ and this shows that $\gamma(M)=N$ and so we deduce that $\gamma(M/\gamma(M))=0$.
\end{proof}

\medskip
\begin{proposition}\label{locgam}
 If $\gamma$ is a left exact preradical functor, then $\CT_{\gamma}\subseteq\ASupp^{-1}(U_{\gamma})$. Moreover if $\gamma$ is radical, then $\ASupp^{-1}(U_{\gamma})=\CT_{\gamma}$.
\end{proposition}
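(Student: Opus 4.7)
The plan is to handle the two inclusions separately, using closure properties of $\CT_\gamma$ and $\CF_\gamma$ derived from the preradical, left-exactness, and radical hypotheses.

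For $\CT_\gamma \subseteq \ASupp^{-1}(U_\gamma)$, I would start with $M\in\CT_\gamma$ and $\alpha\in\ASupp(M)$, and pick a monoform representative $H\in\alpha$ realized as a subquotient $H\subseteq M'/M''$ with $M''\subseteq M'\subseteq M$. Two closure facts apply. First, $\CT_\gamma$ is closed under quotients for any preradical: if $\gamma(M')=M'$ and $\pi:M'\twoheadrightarrow M'/M''$ is the canonical surjection, then $\pi(M')=\pi(\gamma(M'))\subseteq\gamma(M'/M'')$, forcing $\gamma(M'/M'')=M'/M''$. Second, left-exactness of $\gamma$ gives $\gamma(H)=H\cap\gamma(M'/M'')=H$, so $\CT_\gamma$ is closed under subobjects. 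Together these show $H\in\CT_\gamma$, hence $\alpha=\overline{H}\in U_\gamma$.

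For the reverse inclusion when $\gamma$ is radical, suppose $\ASupp(M)\subseteq U_\gamma$ and set $N=M/\gamma(M)$. The radical property yields $\gamma(N)=0$, i.e.\ $N\in\CF_\gamma$, and left-exactness of $\gamma$ shows that $\CF_\gamma$ is closed under subobjects, so every subobject of $N$ is $\gamma$-torsion-free. I would argue by contradiction: if $N\neq 0$, then since $\CA$ is locally noetherian every nonzero object contains a monoform subobject, so $\AAss(N)\neq\emptyset$. Pick $\alpha\in\AAss(N)\subseteq\ASupp(N)\subseteq\ASupp(M)\subseteq U_\gamma$ with a monoform subobject $H\subseteq N$ satisfying $\overline{H}=\alpha$. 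By definition of $U_\gamma$ there is also a monoform $H'\in\alpha$ lying in $\CT_\gamma$. Atom-equivalence of $H$ and $H'$ provides a common nonzero subobject $K$; then $K\in\CT_\gamma$ by subobject-closure of $\CT_\gamma$, while $K\subseteq H\subseteq N$ gives $K\in\CF_\gamma$. Hence $K=\gamma(K)=0$, a contradiction, forcing $N=0$ and $M\in\CT_\gamma$.

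The main subtlety is locating the atom $\alpha$ inside $N$: this relies on the locally noetherian standing hypothesis on $\CA$, ensuring every nonzero object has an associated atom. Beyond that, the argument is driven entirely by the elementary closure properties of $\CT_\gamma$ and $\CF_\gamma$ and by the atom-equivalence criterion that two monoform objects representing the same atom share a nonzero subobject.
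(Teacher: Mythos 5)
Your proof is correct, and for the harder direction it takes a genuinely different route from the paper. For the inclusion $\CT_{\gamma}\subseteq\ASupp^{-1}(U_{\gamma})$ both arguments ultimately rest on the identity $U_{\gamma}=\ASupp(\CT_{\gamma})$, which the paper simply records after Definition \ref{dd}; you spell out the verification via the closure of $\CT_{\gamma}$ under quotients (any preradical) and under subobjects (left exactness, $\gamma(N)=N\cap\gamma(M)$), which is exactly the content being invoked there -- just be sure to note explicitly that $M'\in\CT_{\gamma}$ (again by subobject-closure) before applying quotient-closure to $M'\twoheadrightarrow M'/M''$. For the ``moreover'' part the paper is purely citational: it appeals to the bijection between left exact radicals and hereditary torsion (localizing) classes [St, Chap.\ VI, Corollary 1.8] together with Kanda's classification of localizing subcategories by open subsets of $\ASpec\CA$ [K2, Theorem 5.10], which yields $\CT_{\gamma}=\ASupp^{-1}(\ASupp(\CT_{\gamma}))=\ASupp^{-1}(U_{\gamma})$ at one stroke. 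You instead argue directly: passing to $N=M/\gamma(M)$, using the radical property to get $N\in\CF_{\gamma}$, and using local noetherianness to produce $\alpha\in\AAss(N)\subseteq U_{\gamma}$, whose two monoform representatives (one in $\CF_{\gamma}$, one in $\CT_{\gamma}$) share a nonzero common subobject that must be both torsion and torsion-free. This in effect reproves, in the special case needed, the relevant direction of the classification the paper cites; it is more self-contained and makes transparent exactly where local noetherianness enters (existence of associated atoms), at the cost of redoing work that the cited theorems package, while the paper's route is shorter and situates the proposition inside the general torsion-theoretic/atom-spectrum correspondence.
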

\begin{proof}
The first claim follows from $U_{\gamma}=\ASupp(\CT_{\gamma})$ which has been mentioned in Definition \ref{dd}. The second claim is a consequence of [St, Chap. VI, Corollary 1.8] and [K2, Theorem 5.10].
\end{proof}

\medskip

The following theorem shows that any left exact preradical functor can be contained in a section functor. In particular, any radical functor is a section functor.

\begin{theorem}\label{radgam}
Let $\gamma$ be a left exact preradical functor. Then $\gamma$ is a subfunctor of $\Gamma_{U_{\gamma}}$. Moreover, if $\gamma$ is radical, then the equality $\gamma=\Gamma_{U_{\gamma}}$ holds.
\end{theorem}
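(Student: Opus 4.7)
The plan is to establish both inclusions of $\gamma(M)$ and $\Gamma_{U_{\gamma}}(M)$ inside $M$, pointwise, and to note that naturality is automatic since both functors are subfunctors of the identity. The only real input beyond the definitions is Proposition \ref{locgam}, which supplies the reverse containment in the radical case.

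For the first assertion, I would first observe that $\gamma(M)\in\CT_{\gamma}$ for every $M$. This uses that $\gamma$ is a left exact subfunctor of the identity: applying $\gamma$ to the monomorphism $\gamma(M)\hookrightarrow M$ yields $\gamma(\gamma(M))=\gamma(M)\cap\gamma(M)=\gamma(M)$, so $\gamma$ is idempotent. Since $U_{\gamma}=\ASupp(\CT_{\gamma})$ (as remarked after Definition \ref{dd}), we get $\ASupp(\gamma(M))\subseteq U_{\gamma}$. By the defining maximality of $\Gamma_{U_{\gamma}}(M)$ as the largest subobject of $M$ with atom support in $U_{\gamma}$, we conclude $\gamma(M)\subseteq\Gamma_{U_{\gamma}}(M)$. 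Functoriality of this containment is automatic because both functors act on morphisms by restriction of the ambient morphism of $M$.

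For the moreover part, suppose $\gamma$ is radical. Proposition \ref{locgam} gives $\ASupp^{-1}(U_{\gamma})=\CT_{\gamma}$. Since $\Gamma_{U_{\gamma}}(M)$ has atom support contained in $U_{\gamma}$ by construction, it lies in $\CT_{\gamma}$, so $\gamma(\Gamma_{U_{\gamma}}(M))=\Gamma_{U_{\gamma}}(M)$. Left exactness of $\gamma$ applied to $\Gamma_{U_{\gamma}}(M)\hookrightarrow M$ yields $\gamma(\Gamma_{U_{\gamma}}(M))=\Gamma_{U_{\gamma}}(M)\cap\gamma(M)$, hence $\Gamma_{U_{\gamma}}(M)\subseteq\gamma(M)$. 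Combined with the first part this gives the desired equality $\gamma(M)=\Gamma_{U_{\gamma}}(M)$.

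The main conceptual obstacle, such as it is, lies in recognizing that the radical hypothesis is exactly what turns the one-sided inclusion $\CT_{\gamma}\subseteq\ASupp^{-1}(U_{\gamma})$ of Proposition \ref{locgam} into an equality, thereby forcing $\Gamma_{U_{\gamma}}(M)$ to itself be $\gamma$-torsion; without radicality, $\Gamma_{U_{\gamma}}(M)$ need not lie in $\CT_{\gamma}$ and the reverse containment fails. Everything else is formal manipulation with left-exact subfunctors of the identity.
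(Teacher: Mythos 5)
Your proposal is correct and follows essentially the same route as the paper: idempotence of $\gamma$ from left exactness gives $\gamma(M)\in\CT_{\gamma}$ and hence $\gamma(M)\subseteq\Gamma_{U_{\gamma}}(M)$ by maximality, while in the radical case Proposition \ref{locgam} forces $\Gamma_{U_{\gamma}}(M)\in\CT_{\gamma}$ and thus $\Gamma_{U_{\gamma}}(M)\subseteq\gamma(M)$. Your explicit use of $\gamma(N)=N\cap\gamma(M)$ only spells out steps the paper leaves implicit.
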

\begin{proof}
It is enough to show that $\gamma(M)$ is a subobject of $\Gamma_{U_{\gamma}}(M)$. Since $\gamma$ is left exact, it is idempotent; and hence it follows from Proposition \ref{locgam} that $\gamma(M)\in\CT_{\gamma}\subseteq \ASupp^{-1}(U_{\gamma})$ and then so 
 $\Gamma_{U_{\gamma}}(\gamma(M))=\gamma(M)$. But this implies that $\gamma(M)\subseteq\Gamma_{U_{\gamma}}(M)$. To prove the second claim, for any object $M$ of $\CA$, according to Proposition \ref{locgam}, we have $\gamma(\Gamma_{U_{\gamma}}(M))=\Gamma_{U_{\gamma}}(M)$ which implies that $\Gamma_{U_{\gamma}}(M)\subseteq\gamma(M)$. Now this fact along with the first part imply that $\Gamma_{U_{\gamma}}(M)=\gamma(M)$. 
\end{proof}

\medskip
For a preradical functor $\gamma$, we define a subset of $\ASpec\CA$ by the following
$$W_{\gamma}=\{\alpha\in\ASpec\CA|\hspace{0.1cm}{\rm there}\hspace{0.1cm} {\rm exists} \hspace{0.1cm}\beta\in\ASpec\CA\hspace{0.1cm} {\rm with}\hspace{0.1cm} \beta\leq \alpha $$$${\rm  and} \hspace{0.1cm}\gamma(G)\neq 0 \hspace{0.1cm}{\rm for} \hspace{0.1cm}
{\rm some}\hspace{0.1cm}{\rm monoform} \hspace{0.1cm}{\rm object}\hspace{0.1cm}G \hspace{0.1cm}
{\rm of} \hspace{0.1cm}\CA\hspace{0.1cm}{\rm such}\hspace{0.1cm} {\rm that} \hspace{0.1cm}\beta=\overline{G}\}.$$

\begin{proposition}\label{xx}
$W_{\gamma}$ is an open subset of $\ASpec\CA$ if one of the following conditions occur. 
\begin{itemize}
	
	\item [(1)] $\gamma$ is a left exact preradical; in particular in this case $\ASupp(\CT_{\gamma})= W_{\gamma}$.
	\item [(2)] $\ASpec\CA$ is an Alexandroff space.

\end{itemize}
\end{proposition}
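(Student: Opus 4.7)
My plan is to treat the two hypotheses separately, with the common thread that, by the very definition of the specialization order $\leq$, every open subset of $\ASpec\CA$ is automatically upward closed under $\leq$.

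For case (1), I would strengthen the statement to the equality $W_{\gamma}=\ASupp(\CT_{\gamma})=U_{\gamma}$, which is already known to be open from earlier in the section. The inclusion $U_{\gamma}\subseteq W_{\gamma}$ is routine: $\CT_{\gamma}$ is closed under subobjects (by left exactness of $\gamma$) and under quotients (automatic for any preradical, since $f(\gamma(M))\subseteq\gamma(N)$), hence any $\alpha\in\ASupp(\CT_{\gamma})$ is represented by a monoform $H\in\CT_{\gamma}$, and then $\beta=\alpha$ with $G=H$ witnesses $\alpha\in W_{\gamma}$. For the opposite inclusion, given $\alpha\in W_{\gamma}$ with witness $\beta\leq\alpha$ and monoform $G$ with $\overline{G}=\beta$ and $\gamma(G)\neq 0$, I would invoke the idempotency forced by left exactness (namely $\gamma(\gamma(G))=\gamma(G)\cap\gamma(G)=\gamma(G)$) to conclude $\gamma(G)\in\CT_{\gamma}$. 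Since $\gamma(G)$ is a nonzero subobject of the monoform $G$, it is itself monoform with $\overline{\gamma(G)}=\beta$, so $\beta\in U_{\gamma}$; the upward closedness of the open set $U_{\gamma}$ then promotes $\beta\leq\alpha$ into $\alpha\in U_{\gamma}$.

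For case (2), I would first observe that $W_{\gamma}$ is itself upward closed under $\leq$: if $\beta\leq\alpha$ is a witness for $\alpha\in W_{\gamma}$ and $\alpha\leq\alpha'$, transitivity makes the same pair a witness for $\alpha'\in W_{\gamma}$. Given any $\alpha\in W_{\gamma}$, the Alexandroff hypothesis together with Proposition \ref{alex} supplies a monoform object $H$ with $\overline{H}=\alpha$ and $\ASupp(H)=\{\delta\in\ASpec\CA:\alpha\leq\delta\}$; upward closedness of $W_{\gamma}$ then gives $\ASupp(H)\subseteq W_{\gamma}$, which is exactly the openness criterion at $\alpha$.

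I do not anticipate any serious obstacle. The only subtlety worth highlighting is that the clause "$\exists\,\beta\leq\alpha$" in the definition of $W_{\gamma}$ is essentially a "specialization saturation" of the simpler condition $\beta=\alpha$; once the saturated set is shown to be open (directly from the earlier results in case 1, via Proposition \ref{alex} in case 2), upward closedness of opens absorbs the quantifier and the result falls out.
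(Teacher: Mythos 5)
Your proof is correct. For part (1) it is essentially the paper's own argument: the inclusion $\ASupp(\CT_{\gamma})\subseteq W_{\gamma}$ via $\beta=\alpha$, and the converse by replacing $G$ with $\gamma(G)$ (the paper phrases this as ``we may assume $\gamma(G)=G$'' using idempotency from left exactness) and then using that an open set containing $\beta$ must contain every $\alpha\geq\beta$ --- you route this through upward closedness of $U_{\gamma}$, the paper through $\ASupp(G)$, which is the same point. Part (2) is where you genuinely diverge: the paper passes to the complement $\Phi=\ASpec\CA\setminus W_{\gamma}$, notes that upward closedness of $W_{\gamma}$ makes $\Phi=\bigcup_{\alpha\in\Phi}\overline{\{\alpha\}}$ a union of closed sets, and invokes the Alexandroff property in its dual form (arbitrary unions of closed sets are closed). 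You instead verify the definition of openness pointwise: Proposition \ref{alex} supplies, for each $\alpha\in W_{\gamma}$, a monoform $H\in\alpha$ with $\ASupp(H)=\{\delta\mid\alpha\leq\delta\}$, and upward closedness of $W_{\gamma}$ (the same observation the paper uses implicitly) puts this support inside $W_{\gamma}$. Both arguments are short and hinge on the identical saturation property of $W_{\gamma}$; the paper's is a one-line topological argument needing nothing beyond the Alexandroff axiom itself, while yours trades that for the stronger structural input of Proposition \ref{alex} and in return checks the paper's definition of open directly, without reformulating via closed sets.
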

\begin{proof}
(1) Assume that $\alpha\in\ASupp(\CT_{\gamma})$. Since $\gamma$ is left exact, there exists a monoform object $H\in\CT_{\gamma}$ 
such that $\alpha=\overline{H}$. Considering $\beta=\alpha$ in the definition, we deduce that $\alpha\in W_{\gamma}$. Conversely, given $\alpha\in W_{\gamma}$, there exists $\beta\in\ASpec\CA$ such that $\beta\leq \alpha$ and $\gamma(G)\neq 0$ for some monoform object $G$ with $\overline{G}=\beta$. Since $\gamma$ is left exact, it is idempotent and so we may assume that $\gamma(G)=G$ so that $G\in\CT_{\gamma}$. The fact that $\beta\leq\alpha$ implies that $\alpha\in\ASupp(G)\subseteq\ASupp(\CT_{\gamma})$. (2) Putting $\Phi=\ASpec\CA\setminus W_{\gamma}$, it is clear that $\Phi=\cup_{\alpha\in \Phi}\overline{\{\alpha\}}$ and since $\ASpec\CA$ is Alexandroff, $\Phi$ is closed. 
\end{proof}

\medskip
\begin{corollary}
If $\gamma$ is a left exact preradical functor, then $U_{\gamma}=W_{\gamma}$.
\end{corollary}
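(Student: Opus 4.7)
The plan is to observe that this corollary is essentially an immediate consequence of two facts already recorded in the paper. The strategy is to insert the identity $U_\gamma=\ASupp(\CT_\gamma)$ as a bridge between the definition of $U_\gamma$ and the set $W_\gamma$, then invoke Proposition \ref{xx}(1) to complete the chain.

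First, I would verify $U_\gamma=\ASupp(\CT_\gamma)$ (this is the equality stated right after Definition \ref{dd}). The inclusion $U_\gamma\subseteq\ASupp(\CT_\gamma)$ is immediate: if $H\in\CT_\gamma$ is monoform with $\overline{H}=\alpha$, then $\alpha\in\ASupp(H)\subseteq\ASupp(\CT_\gamma)$. For the reverse inclusion, take $\alpha\in\ASupp(T)$ with $T\in\CT_\gamma$. By definition there is a monoform subquotient $H$ of $T$ with $\overline{H}=\alpha$. Since any preradical has $\CT_\gamma$ closed under quotients (as $\gamma(M/N)\supseteq(\gamma(M)+N)/N$) and left exactness of $\gamma$ gives $\gamma(N)=\gamma(M)\cap N$ so that $\CT_\gamma$ is closed under subobjects, $\CT_\gamma$ is closed under subquotients; hence $H\in\CT_\gamma$ and $\alpha\in U_\gamma$.

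Second, I would directly invoke Proposition \ref{xx}(1), which, under the hypothesis that $\gamma$ is a left exact preradical, asserts the equality $\ASupp(\CT_\gamma)=W_\gamma$. Concatenating the two equalities yields $U_\gamma=\ASupp(\CT_\gamma)=W_\gamma$, which is the desired conclusion.

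There is really no obstacle here: both halves of the identity have already been shown elsewhere, and the corollary is just the juxtaposition of those two statements. The only thing to be careful about is to make the short verification that $U_\gamma$ coincides with $\ASupp(\CT_\gamma)$ without implicitly reusing the conclusion one is trying to prove; the argument above uses only left exactness of $\gamma$ and general properties of preradicals, so no circularity arises.
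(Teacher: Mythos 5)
Your proposal is correct and follows exactly the paper's route: the paper's proof likewise just combines the identity $U_{\gamma}=\ASupp(\CT_{\gamma})$ (noted after Definition \ref{dd}) with Proposition \ref{xx}(1). Your explicit verification of $U_{\gamma}=\ASupp(\CT_{\gamma})$ via closure of $\CT_{\gamma}$ under subquotients is a sound elaboration of what the paper treats as clear.
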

\begin{proof}
 The result is straightforward by using Proposition \ref{xx} and the fact that $U_{\gamma}=\ASupp(\CT_{\gamma})$.  
\end{proof}

\begin{proposition}\label{asssec}
Let $U$ be an open subset of $\ASpec\CA$ such that $\Gamma_U$ preserves injective objects and let $M$ be an object of $\CA$. 
Then $\AAss(M/\Gamma_U(M))\subseteq\AAss(M)$.
\end{proposition}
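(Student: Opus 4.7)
The plan is to reduce to the injective envelope of $M$, use the decomposition of injectives, and exploit that $\Gamma_U$ preserves injective objects to split off the $\Gamma_U$-torsion part cleanly.

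First, I would embed $M$ into its injective envelope $E=E(M)$. By the Matlis-type decomposition recalled in the proof of Proposition \ref{stab}, $E=\bigoplus_{\beta\in\AAss(M)}E(\beta)^{(\mu(\beta))}$. Combining Lemma \ref{monpres}(1) with Lemma \ref{gammon}, each summand $E(\beta)$ satisfies $\Gamma_U(E(\beta))=E(\beta)$ if $\beta\in U$ and $\Gamma_U(E(\beta))=0$ otherwise. Hence we may split
\[
E=E_1\oplus E_2,\qquad E_1=\bigoplus_{\beta\in\AAss(M)\cap U}E(\beta)^{(\mu(\beta))},\quad E_2=\bigoplus_{\beta\in\AAss(M)\setminus U}E(\beta)^{(\mu(\beta))},
\]
with $\Gamma_U(E)=E_1$ and $E_2\in\CF_{\Gamma_U}$.

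Next I would verify the standard identity $\Gamma_U(M)=M\cap\Gamma_U(E)=M\cap E_1$. One inclusion is immediate because $\Gamma_U$ is left exact applied to $M\hookrightarrow E$; the other follows from the fact that $M\cap E_1$ is a subobject of $M$ whose atom support lies in $U$, so it is contained in the largest such subobject, namely $\Gamma_U(M)$.

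Consequently the projection $E=E_1\oplus E_2\to E_2$ restricted to $M$ has kernel $M\cap E_1=\Gamma_U(M)$, producing an injection $M/\Gamma_U(M)\hookrightarrow E_2$. Thus
\[
\AAss(M/\Gamma_U(M))\subseteq\AAss(E_2)\subseteq\AAss(E)=\AAss(M),
\]
the last equality being the standard fact that $\AAss$ of an object equals $\AAss$ of its injective envelope (which follows from the decomposition of $E$, together with the essentiality of $M$ in $E$ to intersect any monoform subobject of $E$ with $M$).

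I do not anticipate a serious obstacle here; the only point to be careful about is the left-exactness identity $\Gamma_U(M)=M\cap\Gamma_U(E)$, but this is a routine consequence of $\Gamma_U$ being a left exact subfunctor of the identity. The whole argument is essentially a repackaging of the proof of Proposition \ref{stab}.
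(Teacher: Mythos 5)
Your argument is correct, and it reaches the conclusion by a genuinely different (direct) route than the paper, although both rest on the same two ingredients: the decomposition $E(M)=\bigoplus_{\beta\in\AAss(M)}E(\beta)^{(\mu(\beta))}$ from the proof of Proposition \ref{stab}, and the dichotomy for $\Gamma_U(E(\beta))$ coming from Lemma \ref{monpres} and Lemma \ref{gammon} (which is exactly where the hypothesis that $\Gamma_U$ preserves injectives enters). You split $E(M)=E_1\oplus E_2$ globally, identify $\Gamma_U(M)=M\cap E_1$ via left exactness and maximality, and obtain an honest embedding $M/\Gamma_U(M)\hookrightarrow E_2$, after which $\AAss(M/\Gamma_U(M))\subseteq\AAss(E_2)\subseteq\AAss(E(M))=\AAss(M)$ follows from essentiality of $M$ in $E(M)$ (together with the standard fact that nonzero subobjects of monoform objects are monoform and atom-equivalent to them). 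The paper instead argues pointwise and by contradiction: it picks $\alpha\in\AAss(M/\Gamma_U(M))$, lifts it to a subobject $N\subseteq M$ with $N/\Gamma_U(M)$ monoform, and shows that if $\alpha\notin\AAss(N)$ then $\AAss(N)\subseteq\AAss(\Gamma_U(M))\subseteq U$, so $E(N)$ and hence $N$ would be $\Gamma_U$-torsion, contradicting $\ASupp(N)\nsubseteq U$; then $\alpha\in\AAss(N)\subseteq\AAss(M)$. Your version is arguably more structural and yields the slightly finer statement that $M/\Gamma_U(M)$ embeds into the $\Gamma_U$-torsion-free summand of $E(M)$, while the paper's avoids any discussion of $\Gamma_U(M)=M\cap\Gamma_U(E)$ and of $\AAss$ of the envelope; both are complete proofs, and your remark that the only delicate point is the identity $\Gamma_U(M)=M\cap\Gamma_U(E(M))$ is accurate (functoriality gives one inclusion, maximality of the torsion subobject the other).
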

\begin{proof}
Given $\alpha\in\AAss(M/\Gamma_U(M))$, there exists a monoform object $H$ of $\CA$ and a subobject $N$ of $M$ such that $\alpha=\overline{H}$ and 
$H=N/\Gamma_U(M)$. We observe that $\ASupp(N)\nsubseteq U$ so that $\ASupp(H)\nsubseteq U$. On the other hand, the exact sequence $0\To\Gamma_U(M)\To N\To H\To 0$ implies that $\AAss(N)\subseteq \AAss(\Gamma_U(M))\cup \{\alpha\}$. If $\alpha\notin\AAss(N)$, then $\AAss(N)=\AAss(\Gamma_ U(M))$. By Matlis structure theorem, we have $E(N)=\bigoplus_{\beta\in\AAss(N)}\limits E(\beta)^{(\mu(\beta))}$, where $E(\beta)^{(\mu(\beta))}=\bigoplus_{\mu(\beta)}\limits E(\beta)$ and $\mu(\beta)$ denotes the numbers of $E(\beta)$ appearing in $E(N)$. Therefore, for each $\beta\in\AAss(N)$, Lemma \ref{gammon} implies that $\Gamma_U(E(\beta))=E(\beta)$; and hence $\Gamma_U(E(N))=E(N)$ so that $\Gamma_U(N)=N$ which is a contradiction. 
\end{proof}

\medskip
\begin{lemma}\label{locga}
Let $\alpha$ be an atom in $\ASpec\CA$. Then $t_{\alpha}=\Gamma_{U}$, where $U=\ASupp(\CX(\alpha))$. 
\end{lemma}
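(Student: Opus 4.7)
The plan is to derive this lemma as an immediate application of Theorem \ref{radgam} to the functor $t_{\alpha}$, once we identify the open set $U_{t_{\alpha}}$ from Definition \ref{dd} with the set $U=\ASupp(\CX(\alpha))$ appearing in the statement. Since $t_{\alpha}$ is a left exact radical functor by construction (it is the functor corresponding to the localizing subcategory $\CX(\alpha)$ under the bijection of [St, Chap.~VI, Corollary 1.8]), Theorem \ref{radgam} yields directly that $t_{\alpha}=\Gamma_{U_{t_{\alpha}}}$. The whole question therefore reduces to the set-theoretic identity $U_{t_{\alpha}}=U$.

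First I would record that the bijection between localizing subcategories and left exact radical functors gives $\CT_{t_{\alpha}}=\CX(\alpha)$; this is the very definition of ``$t_{\alpha}$ is the left exact radical functor corresponding to $\CX(\alpha)$'' used in the excerpt. Next I would invoke the remark made right after Definition \ref{dd}, namely $U_{\gamma}=\ASupp(\CT_{\gamma})$ for any left exact preradical $\gamma$. Applied to $\gamma=t_{\alpha}$ this gives
\[
U_{t_{\alpha}}=\ASupp(\CT_{t_{\alpha}})=\ASupp(\CX(\alpha))=U.
\]

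Finally I would plug this identity into Theorem \ref{radgam} to conclude $t_{\alpha}=\Gamma_{U_{t_{\alpha}}}=\Gamma_{U}$. There is no real obstacle in this argument; the only point to be careful about is quoting the correspondence $\CT_{t_{\alpha}}=\CX(\alpha)$ correctly (it is the content of the standard bijection between hereditary torsion classes and left exact radicals, combined with [K2, Theorem 5.10] translating localizing subcategories of $\CA$ to open subsets of $\ASpec\CA$). Everything else is bookkeeping with the notation $U_{\gamma}=\ASupp(\CT_{\gamma})$.
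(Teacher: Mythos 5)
Your proof is correct, and there is no circularity: Theorem \ref{radgam} is established earlier in the same section and its proof (via Proposition \ref{locgam}) does not use Lemma \ref{locga}. Your route differs from the paper's. The paper argues directly, object by object: since $t_{\alpha}(M)\in\CX(\alpha)$ one gets $\ASupp(t_{\alpha}(M))\subseteq U$, hence $t_{\alpha}(M)\subseteq\Gamma_U(M)$; conversely $\ASupp(\Gamma_U(M))\subseteq U=\ASupp(\CX(\alpha))$ together with [K2, Theorem 5.10] gives $\Gamma_U(M)\in\CX(\alpha)$, hence $\Gamma_U(M)\subseteq t_{\alpha}(M)$. You instead treat the lemma as a corollary of the general statement $\gamma=\Gamma_{U_{\gamma}}$ for left exact radicals, and reduce everything to the bookkeeping identity $U_{t_{\alpha}}=\ASupp(\CT_{t_{\alpha}})=\ASupp(\CX(\alpha))=U$, using that $\CT_{t_{\alpha}}=\CX(\alpha)$ is exactly what ``$t_{\alpha}$ corresponds to $\CX(\alpha)$'' means. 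Your version is shorter and makes the lemma transparently a special case of Theorem \ref{radgam}; the paper's version is self-contained at the level of this lemma, exhibiting the two inclusions explicitly without routing through the general theorem (though both arguments ultimately rest on the same correspondence [St, Chap.\ VI, Corollary 1.8] plus [K2, Theorem 5.10], which in your derivation is hidden inside Proposition \ref{locgam}).
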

\begin{proof}
It is enough to show that for any object $M$, we have $t_{\alpha}(M)=\Gamma_{U}(M)$. Since $t_{\alpha}(M)\in\CX(\alpha)$, we have $\ASupp(t_{\alpha}(M))\subseteq U$ and then $\Gamma_U(t_{\alpha}(M))=t_{\alpha}(M)$ so that $t_{\alpha}(M)\subseteq \Gamma_U(M)$. On the other hand, $\ASupp(\Gamma_U(M))\subseteq U=\ASupp(\CX(\alpha))$. Thus it follows from [K2, Theorem 5.10] that $\Gamma_U(M)\in\CX(\alpha)$ so that $t_{\alpha}(\Gamma_U(M))=\Gamma_U(M)$. The last fact implies that $\Gamma_U(M)\subseteq t_{\alpha}(M)$.
\end{proof}

Let $M$ be an object of $\CA$. An atom $\alpha\in\ASupp(M)$ is called {\it minimal} provided there is no $\beta\in\ASupp(M)$ with $\beta< \alpha$. The set of all minimal atoms of $\ASupp(M)$ is denoted by MinASupp$(M)$. 
\medskip
\begin{corollary} [Sa, Propositions 4.11 and 4.12] \label{minas}
Let $M$ be a noetherian object and let $\alpha\in{\rm MinASupp}(M)$. Then $\alpha\in\AAss(M/{t_{\alpha}}(M))$. In particular if $\CX(\alpha)$ is stable, then $\alpha\in\AAss(M)$.
\end{corollary}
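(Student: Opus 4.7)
The plan is to transfer the problem into the localization $\CA_\alpha = \CA/\CX(\alpha)$ via the adjunction $F \dashv G$: I will show that any monoform subobject $H$ of $N := M/t_\alpha(M)$ is forced to have atom class $\alpha$ by exhibiting, inside $GF(H)$, a nonzero intersection between $H$ and a lifted copy of a simple representative of $\alpha$ in $\CA_\alpha$.

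First I would check the basic setup. Since $t_\alpha$ is a left exact radical, $N$ is $t_\alpha$-torsion-free and noetherian. Additivity of $\ASupp$ on the short exact sequence $0 \to t_\alpha(M) \to M \to N \to 0$ combined with $\ASupp(t_\alpha(M)) \subseteq \ASpec\CA \setminus \overline{\{\alpha\}}$ shows $\alpha \in \ASupp(N)$, so in particular $N \neq 0$. The support formula $\ASupp(N_\alpha) = \{\gamma \in \ASupp(N) : \gamma \leq \alpha\}$ together with the minimality of $\alpha$ in $\ASupp(M) \supseteq \ASupp(N)$ then collapses to $\ASupp(N_\alpha) = \{\alpha\}$ in $\CA_\alpha$.

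For the core argument, take any monoform subobject $H \subseteq N$ with $\overline{H} = \beta$ (which exists because $N$ is nonzero noetherian in a locally noetherian Grothendieck category), and aim to prove $\beta = \alpha$. Since $t_\alpha(H) \subseteq t_\alpha(N) = 0$, we have $F(H) \neq 0$, and the inclusion $F(H) \subseteq N_\alpha$ forces $\ASupp_{\CA_\alpha}(F(H)) = \{\alpha\}$. Now choose any monoform $H_0 \in \CA$ with $\overline{H_0} = \alpha$; by Lemma \ref{simp}, $F(H_0)$ is simple in $\CA_\alpha$. Since $\CA_\alpha$ is locally noetherian and $F(H)$ is nonzero noetherian, it has a monoform subobject, and the support constraint forces its atom class to be $\alpha$; by atom-equivalence with the simple $F(H_0)$, the latter must embed as a subobject. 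This produces a monomorphism $\iota : F(H_0) \hookrightarrow F(H)$. Transferring via the adjunction, $\iota$ corresponds to $\phi := G(\iota) \circ \eta_{H_0} : H_0 \to GF(H)$; both factors are monic ($\eta_{H_0}$ because $t_\alpha(H_0) = 0$, arguing that a nonzero $t_\alpha(H_0)$ would be a subobject of the monoform $H_0$ with atom class $\alpha$ and yet lie in $\CX(\alpha)$, contradicting $\alpha \in \ASupp(t_\alpha(H_0))$; and $G(\iota)$ because $G$ is left exact), so $\phi$ is injective and $\phi(H_0) \cong H_0$ is a monoform subobject of $GF(H)$ in class $\alpha$.

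The final step is to intersect inside $GF(H)$. The inclusion $\eta_H : H \hookrightarrow GF(H)$ is also injective (by the same $t_\alpha(H) = 0$ argument), and $GF(H)/H$ is $\CX(\alpha)$-torsion by the standard theory of Gabriel localization. If $\phi(H_0) \cap H = 0$, then $\phi(H_0)$ would embed into $GF(H)/H$, contradicting $\phi(H_0) \cong H_0 \notin \CX(\alpha)$. Hence $\phi(H_0) \cap H \neq 0$; as a nonzero subobject of each of the monoforms $\phi(H_0)$ (class $\alpha$) and $H$ (class $\beta$), it belongs to both atom classes, so $\alpha = \beta$, giving $\alpha \in \AAss(N)$. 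The ``in particular'' clause is then a formal consequence: stability of $\CX(\alpha)$ implies via Proposition \ref{stab} that $t_\alpha$ preserves injectives, so since $t_\alpha = \Gamma_U$ for the open set $U = \ASupp(\CX(\alpha))$ by Lemma \ref{locga}, Proposition \ref{asssec} yields $\AAss(M/t_\alpha(M)) \subseteq \AAss(M)$, whence $\alpha \in \AAss(M)$. The main obstacle is the adjunction-lifting step, where the two critical and intertwined verifications are the injectivity of $\phi$ (relying on $t_\alpha(H_0) = 0$) and the non-vanishing of $\phi(H_0) \cap H$ (relying on both $t_\alpha(H) = 0$ and the torsion nature of $GF(H)/H$).
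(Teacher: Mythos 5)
Your argument is correct, but it takes a noticeably different route from the paper's. The paper localizes at $\alpha$ and uses the finite-length theorem at minimal atoms ([K2, Proposition 6.7]) to get a composition series of $M_{\alpha}$ with simple factors $D_{\alpha}$; applying the right adjoint $G$ and inducting (together with [K2, Lemma 5.14], which makes $G(D_{\alpha})$ a monoform of class $\alpha$) gives $\AAss(G(M_{\alpha}))=\{\alpha\}$, and then the essentiality of $M/t_{\alpha}(M)$ in $G(M_{\alpha})$ ([K1, Proposition 4.9]) transfers this to $\AAss(M/t_{\alpha}(M))=\{\alpha\}$. You instead fix an arbitrary monoform $H\subseteq M/t_{\alpha}(M)$, use only the support formula $\ASupp(N_{\alpha})=\{\beta\in\ASupp(N)\mid\beta\leq\alpha\}$ plus minimality to pin $\ASupp_{\CA_{\alpha}}(F(H))$ down to $\alpha$, embed the simple $F(H_0)$ (Lemma \ref{simp}) into $F(H)$ via atom-equivalence, and pull back through the unit of the adjunction, using that kernel and cokernel of the unit lie in $\CX(\alpha)$ to force a nonzero intersection with $H$ inside $GF(H)$, whence $\overline{H}=\alpha$. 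Your version avoids the finite-length input and the composition-series induction, replacing them with more elementary torsion-theoretic facts (at the price of a per-subobject diagram chase, and of implicitly invoking Kanda's identification $\ASpec(\CA/\CX(\alpha))\cong\ASpec\CA\setminus\ASupp\CX(\alpha)$ compatible with $F$, which you should cite explicitly); the paper's version gets the stronger statement $\AAss(G(M_{\alpha}))=\{\alpha\}$ in one sweep from the finiteness result. Your handling of the ``in particular'' clause via Proposition \ref{stab}, Proposition \ref{asssec} and Lemma \ref{locga} coincides with the paper's.
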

\begin{proof}
Assume that $\alpha\in\MinASupp(M)$. Then by virtue of [K2, Prposition 6.7], the object $M_{\alpha}$ in $\CA_{\alpha}$ has finite length and so $\AAss_{\CA_{\alpha}}(M_{\alpha})=\{\alpha\}$. For any  monoform object $D$ of $\CA$ with $\overline{D}=\alpha$, the object $D_{\alpha}$ is simple in $\CA_{\alpha}$. It is clear that $t_{\alpha}(D)=0$ and hence it follows from [K2, Lemma 5.14] that $G(D_{\alpha})$ is a monoform object of $\CA$ containing $D$ where $G: \CA_{\alpha}\To \CA$ is the right adjoint functor of $(-)_{\alpha}:\CA\To \CA_{\alpha}$. Thus we have $\alpha=\overline{G(D_{\alpha})}$.
 Since $M_\alpha$ has finite length, there exists a composition series 
$$0\subseteq L_1\subseteq\dots\subseteq L_n=M_{\alpha}$$ of subobjects of $M_{\alpha}$ such that for each $1\leq i\leq n$, we have $L_i/L_{i-1}\cong D_{\alpha}$. Applying the left exact functor $G(-)$ to the exact sequences $0\To L_{i-1}\To L_i\To L_i/L_{i-1}\To 0$ and using an easy induction, we deduce that 
$\AAss(G(M_{\alpha}))=\{\alpha\}$. According to [K1, Proposition 4.9], $M/t_{\alpha}(M)$ is essential in $G(M_{\alpha}
)$ and  hence $\AAss(M/t_{\alpha}(M))=\AAss(G(M_{\alpha}))=\{\alpha\}$. The second claim follows from Proposition \ref{stab}, \ref{asssec} and Lemma \ref{locga}.  
 \end{proof}

\medskip
\begin{corollary}\label{ess}
Let $\CA$ be a locally stable category and $M$ be an object of $\CA$. If $N$ is an essential subobject of $M$, then $\ASupp(N)=\ASupp(M)$.
\end{corollary}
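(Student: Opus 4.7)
The plan is to establish both inclusions separately; only the nontrivial direction $\ASupp(M) \subseteq \ASupp(N)$ uses the local stability assumption. For the easy direction, since $N$ is a subobject of $M$, any monoform subquotient of $N$ is automatically a monoform subquotient of $M$, so $\ASupp(N) \subseteq \ASupp(M)$.

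For the reverse containment, I would argue by contraposition. Fix $\alpha \in \ASpec\CA$ with $\alpha \notin \ASupp(N)$ and show $\alpha \notin \ASupp(M)$. By the identity $\ASupp(X) = \{\beta \in \ASpec\CA \mid X_\beta \neq 0\}$ (recalled after Definition 2.1), the assumption $\alpha \notin \ASupp(N)$ translates to $N_\alpha = 0$, i.e., $N \in \CX(\alpha)$. Since $\CA$ is locally stable, the localizing subcategory $\CX(\alpha)$ is stable, so injective envelopes of its objects remain inside it; in particular $E(N) \in \CX(\alpha)$.

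Now the essentiality of $N$ in $M$ forces $E(N) = E(M)$: indeed, $N \subseteq_{\mathrm{ess}} M \subseteq_{\mathrm{ess}} E(M)$ gives $N \subseteq_{\mathrm{ess}} E(M)$ by transitivity of essential inclusions, and any injective essential extension of $N$ is isomorphic to $E(N)$. Therefore $M \subseteq E(M) = E(N) \in \CX(\alpha)$, and because $\CX(\alpha)$ is a Serre subcategory (hence closed under subobjects), we conclude $M \in \CX(\alpha)$, i.e., $\alpha \notin \ASupp(M)$.

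No step requires real computation; the only delicate point is the transitivity argument that $N \subseteq_{\mathrm{ess}} M$ implies $E(N) = E(M)$, together with the correct invocation of local stability to pass from $N \in \CX(\alpha)$ to $E(N) \in \CX(\alpha)$. Everything else is a direct application of the Serre/stable properties of $\CX(\alpha)$ and the characterization of atom support via the quotient functor $(-)_\alpha$.
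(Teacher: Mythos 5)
Your proof is correct, but it takes a genuinely different route from the paper's. The paper first treats the case where $M$ is noetherian: it uses Corollary \ref{minas} to get $\MinASupp(M)\subseteq\AAss(M)=\AAss(N)$, then invokes [K2, Proposition 4.7] to place every $\beta\in\ASupp(M)$ above some minimal atom of the support, and finally handles arbitrary $M$ by writing it as a direct union of noetherian subobjects and intersecting with $N$. You instead argue by contraposition at a single atom: $\alpha\notin\ASupp(N)$ means $N_\alpha=0$, i.e.\ $N\in\CX(\alpha)$; stability of the localizing subcategory $\CX(\alpha)$ (which local stability supplies) puts $E(N)$ in $\CX(\alpha)$; essentiality gives $E(M)\cong E(N)$, so $M$ embeds in an object of $\CX(\alpha)$ and, being a subobject, lies in $\CX(\alpha)$, whence $\alpha\notin\ASupp(M)$. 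All the ingredients you use are available in the paper ($\ASupp(X)=\{\alpha\mid X_\alpha\neq 0\}$ from [K1, Proposition 5.10], stability of $\CX(\alpha)$, and the standard uniqueness of injective envelopes), so the argument is sound. What your approach buys: it avoids the noetherian reduction, the minimal-atom machinery and [K2, Proposition 4.7] entirely, it only needs stability of the subcategories $\CX(\alpha)$ rather than of all localizing subcategories, and it actually proves the sharper statement that any stable localizing subcategory containing an essential subobject of $M$ contains $M$. What the paper's route buys is that it stays within the associated-atom framework developed just before (Corollary \ref{minas}), yielding along the way the finer information $\MinASupp(M)\subseteq\AAss(N)$ for noetherian $M$.
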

\begin{proof}
We first assume that $M$ is noetherian. For any $\alpha\in{\rm MinASupp}(M)$, using Corollary \ref{minas}, $\alpha\in\AAss(M)=\AAss(N)$. Then $\alpha \in{\rm MinASupp}(N)$. Now for any 
$\beta\in\ASupp(M)$, according to [K2, Proposition 4.7], there exists $\alpha\in{\rm MinASupp}(N)$ such that $\alpha\leq \beta$. Then $\beta\in\ASupp(N)$. Now assume that $M$ is an arbitrary object of $\CA$. In this case $M$ is a direct union of its noetherian objects, that is $M=\dot{\bigcup}M_i$. For any $\beta\in\ASupp(M)$, there exists some $i$ such that $\beta\in\ASupp(M_i)$. It is clear that $N\cap M_i$ is essential subobject of $M_i$; hence using the first part $\beta\in\ASupp(N\cap M_i)$. 
\end{proof}

\medskip

\section{non-vanishing of local cohomology objects}

We start this section by the following definitions.
\begin{definition}
Given an object $M$ of $\CA$, we define the dimension of $M$, denoted by $\dim M$, that is the largest non-negative integer $n$
 such that $\alpha_0<\alpha_1<\dots<\alpha_n$ is a chain of atoms in $\ASupp(M)$.
\end{definition}

An atom $\alpha$ in $\ASpec\CA$ is said to be {\it maximal} if there
exists a simple object $H$ of $\CA$ such that $\alpha=\overline{H}$. 
We denote by ${\rm m-}\ASpec \CA$, the subset of $\ASpec \CA$
consisting of all maximal atoms. 
By virtue of [Sa, Proposition 3.2], an atom $\alpha$ is maximal if and only if $\{\alpha\}$ is an open subset of $\ASpec\CA$. Moreover, if $\frak m$ is an maximal atom, it is maximal under the order relation $\leq$ in $\ASpec\CA$. More precisely, assume that $H$ is a simple object with $\alpha=\overline{H}$ and $\beta\in\ASpec\CA$ such that $\alpha\leq
\beta$. Then, since $\alpha\in\ASupp(H)=\{\alpha\}$, the definition
implies that $\beta\in\ASupp(H)$ and so $\beta=\alpha$.

A definition of the dimension of an object $M$ of $\CA$ was given by Gabriel [Ga, Chapter IV] or [GW, Chapter 15] and nowadays it is called Krull-Gabriel dimension and often denoted by KGdim$M$ which we present as follows.

\medskip
\begin{definition}
For a Grothendieck category $\CA$, we define the {\it Krull-Gabriel filtration} of $\CA$ as follows. For any ordinal number $\alpha$
we denote by $\CA(\alpha)$, the localizing subcategory of $\CA$ is defined in the following manner:

$\CA({-1})$ is the zero subcategory.

$\CA(0)$ is the smallest localizing subcategory containing all simple objects.

Let us assume that $\alpha=\beta+1$ and denote by $T_{\beta}:\CA\To\CA/\CA({\beta})$ the canonical functor and by 
$S_{\beta}:\CA/\CA({\beta})\To\CA$ the right adjoint functor of $T_{\beta}$. Then an object $X$ of $\CA$ will belong to 
$\CA_{\alpha}$ if and only if $T_{\beta}(X)\in{\rm Ob}((\CA/\CA({\beta}))(0))$. If $\alpha$ is a limit ordinal, then $\CA({\alpha})$ is the localizing subcategory generated by all localizing subcategories $\CA_{\beta}$ with $\beta\leq \alpha$. It is clear that if $\alpha\leq \alpha'$, then $\CA({\alpha})\subseteq\CA({\alpha'})$. Moreover, since the class of all localizing subcategories of $\CA$ is a set, there exists an ordinal $\alpha$ such that $\CA({\alpha})=\CA({\tau})$ for all $\tau\geq \alpha$. Let us put $\CA({\tau})=\cup_{\alpha}\CA({\alpha})$.

 We say that the localizing subcategories $\{\CA_{\alpha}\}_{\alpha}$ define the Krull-Gabriel filtration of $\CA$. We say that an object $M$ of $\CA$ has the {\it Krull-Gabriel dimension} defined if $M\in{\rm Ob}(\CA({\tau}))$. The smallest ordinal number $\alpha$ so that $M\in{\rm Ob}(\CA({\alpha}))$ is denoted by ${\rm KGdim}M$.

 It is clear by definition that KGdim$0=-1$ and KGdim$M\leq 0$ if and only if $\ASupp(M)\subseteq {\rm m-}\ASpec \CA$.
\end{definition}

In the following lemma and proposition we obtain the relationship between Krull-Gabriel dimension and our new dimension of objects.

\medskip
\begin{lemma}\label{gone}
Let $M$ be an object of $\CA$. If $\dim M\geq 1$, then ${\rm KGdim}M\geq 1$. 
\end{lemma}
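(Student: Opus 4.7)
The plan is to use the contrapositive via the characterization given just before the lemma: $\mathrm{KGdim}\,M\leq 0$ holds if and only if $\ASupp(M)\subseteq \mathrm{m\text{-}}\ASpec\CA$. So to show $\mathrm{KGdim}\,M\geq 1$ under the hypothesis $\dim M\geq 1$, it suffices to exhibit some atom in $\ASupp(M)$ that is not maximal.

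First I would unpack the hypothesis: $\dim M\geq 1$ produces, by definition, a strict chain $\alpha_0<\alpha_1$ of atoms with $\alpha_0,\alpha_1\in\ASupp(M)$. The key step is then to observe that $\alpha_0$ cannot be a maximal atom. Indeed, the paper records (citing \textup{[Sa, Proposition 3.2]}) that if $\alpha$ is maximal then $\{\alpha\}$ is open in $\ASpec\CA$, and moreover $\alpha$ is maximal with respect to the specialization order $\leq$: any $\beta$ with $\alpha\leq \beta$ must already satisfy $\beta=\alpha$. Applying this to $\alpha_0$ with $\beta=\alpha_1$ would force $\alpha_0=\alpha_1$, contradicting $\alpha_0<\alpha_1$. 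Hence $\alpha_0\notin \mathrm{m\text{-}}\ASpec\CA$.

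Therefore $\ASupp(M)\not\subseteq \mathrm{m\text{-}}\ASpec\CA$, which by the observation above means $\mathrm{KGdim}\,M\not\leq 0$, i.e.\ $\mathrm{KGdim}\,M\geq 1$, as required.

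I do not expect any genuine obstacle here: the whole content is a dictionary translation between the two dimensions via the order-theoretic fact that maximal atoms are maximal in $(\ASpec\CA,\leq)$. The only subtle point to phrase carefully is that ``$\mathrm{KGdim}$'' is ordinal-valued, so ``$\geq 1$'' should be interpreted as ``not $\leq 0$,'' which is exactly what the support characterization delivers.
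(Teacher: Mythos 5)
Your proposal is correct and follows essentially the same route as the paper: from a strict chain $\alpha_0<\alpha_1$ in $\ASupp(M)$ one gets a non-maximal atom in $\ASupp(M)$ (since maximal atoms are maximal under $\leq$), hence $\ASupp(M)\nsubseteq{\rm m\text{-}}\ASpec\CA$, so $M\notin{\rm Ob}(\CA(0))$ and ${\rm KGdim}\,M\geq 1$. You merely spell out the non-maximality of $\alpha_0$, which the paper leaves implicit.
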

\begin{proof}
If dim$M\geq 1$, there exists a non-maximal atom $\alpha\in\ASupp(M)$. This implies that $\ASupp(M)\nsubseteq{\rm m-}\ASpec \CA$ and so $M\notin{\rm Ob}(\CA_0)$. Thus KGdim$M\geq 1$.
\end{proof}

\medskip
\begin{proposition}
If $M$ is a noetherian object of $\CA$ with $\dim M=1$ and ${\rm MinASupp}(M)$ is a finite set, then ${\rm KGdim}M=1$.
\end{proposition}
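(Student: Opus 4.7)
By Lemma \ref{gone}, ${\rm KGdim}\,M\geq 1$, so it remains to prove ${\rm KGdim}\,M\leq 1$, i.e.\ $M\in{\rm Ob}(\CA(1))$. Unwinding the definition of the Krull--Gabriel filtration, this is equivalent to showing that $T_0(M)$ lies in the localizing subcategory of $\CA/\CA(0)$ generated by its simple objects, where $T_0:\CA\To\CA/\CA(0)$ denotes the canonical quotient functor.

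My first step would be to identify the localizing subcategory $\CA(0)$ with $\ASupp^{-1}({\rm m-}\ASpec\CA)$. Since $\CA(0)$ is by definition the smallest localizing subcategory containing every simple object, whose associated atoms are exactly the maximal atoms of $\CA$, and since ${\rm m-}\ASpec\CA$ is open in $\ASpec\CA$ (each singleton $\{\fm\}$ being open by [Sa, Proposition 3.2]), the bijection between open subsets of $\ASpec\CA$ and localizing subcategories (Theorem \ref{radgam} combined with [K2, Theorem 5.10]) yields this identification. Consequently, $\ASpec(\CA/\CA(0))$ is the subspace $\ASpec\CA\setminus{\rm m-}\ASpec\CA$ with its induced ordered topology, and the support of $T_0(M)$ in $\CA/\CA(0)$ equals $\ASupp(M)\setminus{\rm m-}\ASpec\CA$.

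Next, I would exploit the fact that $\ASupp(M)$ is closed under specialization in $\ASpec\CA$: if $\alpha\in\ASupp(M)$ and $\alpha\leq\beta$, then for $H\in\alpha$ with $\ASupp(H)\subseteq\ASupp(M)$, Proposition \ref{alex} forces $\beta\in\ASupp(H)\subseteq\ASupp(M)$. Combined with $\dim M=1$, every $\alpha\in\ASupp(M)\setminus{\rm m-}\ASpec\CA$ must be minimal in $\ASupp(M)$: a strict predecessor $\gamma<\alpha$ in $\ASupp(M)$ combined with any $\beta>\alpha$ in $\ASupp(M)$ (produced from $\alpha$ failing to be an open point, together with the noetherian hypothesis and Lemma \ref{simp}) would give a chain of length $\geq 2$. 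Using the hypothesis that $\MinASupp(M)$ is finite, the support $\ASupp(M)\setminus{\rm m-}\ASpec\CA$ is therefore finite; moreover for each such $\alpha$ every $\beta>\alpha$ in $\ASpec\CA$ must lie in ${\rm m-}\ASpec\CA$, so $\{\alpha\}$ is open in the subspace topology and $\alpha$ is a maximal atom of $\CA/\CA(0)$.

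Finally, $T_0(M)$ is a noetherian object of $\CA/\CA(0)$ whose support is a finite set of maximal atoms of $\CA/\CA(0)$. A standard filtration argument (finiteness of associated atoms of noetherian objects together with Lemma \ref{simp} applied inside $\CA/\CA(0)$) then produces a finite filtration of $T_0(M)$ by noetherian subobjects with simple quotients, so that $T_0(M)\in(\CA/\CA(0))(0)$ and ${\rm KGdim}\,M\leq 1$. The main obstacle I anticipate is the rigorous passage from an order-maximal atom of $\ASupp(M)$ to an open-point maximal atom of $\ASpec\CA$; my plan is to handle this via the noetherian structure of $M$, using Lemma \ref{simp} to extract a simple quotient at each top-level atom.
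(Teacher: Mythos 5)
Your strategy---unwinding the Krull--Gabriel filtration and showing that $T_0(M)$ has support consisting of maximal atoms of $\CA/\CA(0)$---is genuinely different from the paper's, but as written it has a real gap at exactly the point you flag as ``the main obstacle,'' and the plan you sketch for closing it does not work. The pivotal claim is that every $\alpha\in\ASupp(M)\setminus{\rm m-}\ASpec\CA$ is minimal in $\ASupp(M)$ and, further, becomes an open point (hence a maximal atom, via [Sa, Proposition 3.2]) of $\ASpec(\CA/\CA(0))$. To rule out a non-minimal such $\alpha$ you must produce some $\beta>\alpha$ in $\ASupp(M)$ from the mere fact that $\alpha$ has no simple representative; the route via ``a simple quotient at each top-level atom'' does not give this: if $H\in\alpha$ is a noetherian monoform subquotient of $M$ and $S$ is a simple quotient of $H$, then $\overline{S}\in\ASupp(H)$, but different representatives of $\alpha$ have different supports (Proposition \ref{alex} only controls the intersection $\bigcap_{H\in\alpha}\ASupp(H)$), so there is no reason that $\alpha\leq\overline{S}$ and no forbidden chain arises; moreover Lemma \ref{simp} concerns simplicity of $H_\alpha$ in the localization $\CA_\alpha$, not simple objects of $\CA$ or of $\CA/\CA(0)$. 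Likewise, openness of $\{\alpha\}$ in the subspace $\ASpec\CA\setminus{\rm m-}\ASpec\CA$ does not follow from ``every $\beta>\alpha$ is maximal'': the up-set of $\alpha$ is an intersection of open sets, not an open set, and a single representative $H$ may have other non-maximal atoms in $\ASupp(H)$; excluding them would need something like $\MinASupp(H)=\{\alpha\}$, which (cf.\ Corollary \ref{minas}) is only available under stability hypotheses you do not have.

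A structural warning sign: if your key claim held, the finiteness of $\MinASupp(M)$ would be irrelevant, since $\ASupp(T_0(M))\subseteq{\rm m-}\ASpec(\CA/\CA(0))$ already gives $T_0(M)\in(\CA/\CA(0))(0)$ with no length argument at all---yet the paper uses that hypothesis essentially. The paper argues via the descending-chain characterization of Krull--Gabriel dimension: given $M_1\supseteq M_2\supseteq\cdots$, for each of the finitely many $\alpha\in\MinASupp(M)$ the object $({M_1})_{\alpha}$ has finite length in $\CA_{\alpha}$, so there is a uniform $n$ with $(M_i/M_{i+1})_{\alpha}=0$ for all $i\geq n$ and all minimal $\alpha$; then $\dim M=1$ forces $\ASupp(M_i/M_{i+1})$ to consist of maximal atoms, so $M_i/M_{i+1}$ has finite length by [Sa, Theorem 2.12], i.e.\ ${\rm KGdim}(M_i/M_{i+1})=0$, whence ${\rm KGdim}M\leq 1$. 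The finiteness of $\MinASupp(M)$ enters precisely to obtain the uniform index $n$; your argument never engages with descending chains and uses the finiteness only to bound the size of a support, which is further evidence that the deferred step is where the actual content of the proposition lies and is not supplied by your sketch.
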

\begin{proof}
According to Lemma \ref{gone}, we have ${\rm KGdim}M\geq 1$. Assume that $$M_1\supseteq M_2\supseteq M_3\supseteq\dots$$
is a descending chain of submodules of $M$. For any $\alpha\in{\rm MinSupp}(M)$, if ${M_1}_{\alpha}\neq 0$, then $\alpha\in{\rm MinASupp}(M_1)$ and hence $({M_1})_{\alpha}$ has finite length. Thus there exists a positive integer $n_{\alpha}$ such that for all $i\geq n_{\alpha}$, we have $(M_i/M_{i+1})_{\alpha}=0$. Since ${\rm MinASupp}(M)$ is a finite set, we can get a positive integer $n$ such that 
$(M_i/M_{i+1})_{\alpha}=0$ for all $i\geq n$ and all $\alpha\in {\rm MinASupp}(M)$. Therefore $\ASupp(M_i/M_{i+1})$ contains only maximal atoms and hence it follows from [Sa, Theorem 2.12] that $M_i/M_{i+1}$ have finite length for all $i\geq n$. Then KGdim$(M_i/M_{i+1})=0$ for all $i\geq n$. This implies that KGdim$M=1$. 
\end{proof}

\medskip
 \begin{definition}
Assume that $W$ is an open subset of $\ASpec\CA$ and $\Gamma_W$ preserves injective objects. For any object $M$ of $\CA$ and $i\geq 0$, we define $i$-{\it th local cohomology object of $M$ with respect to} $W$, denoted by $H_W^i(M)$, that is $H_W^i(M)=H^i(\Gamma_W(I))$ where $I$ is an injective resolution of $M$. When $\Gamma_W(M)=M$, since $\Gamma_W$ preserves injective objects, $M$ has an injective resolution with each components $\Gamma_W$-torsion. This implies that $H_W^i(M)=0$ for all $i>0$.  

 In the case $W=\{\frak m\}$ where $\frak m$ is a maximal atom, we denote the $i$-th local cohomology object $M$ with respect to $W$ by $H_{\frak m}^i(M)$. A Grothendieck category is said to be {\it local} 
if $\ASpec\CA$ contains only one maximal atom $\frak m$. In this case the local category $\CA$ is denoted by $(\CA,\frak m)$.

 We notice that if $\CA$ is a local Grothendieck category in sense of [K2, Definition 6.3] or [P, Section 4.20], it is local by our definition. To be more precise, in this case, there exists a simple object $S$ such that $E(S)$ is an injective cogenerator of $\CA$. Then for any other simple object $H$ of $\CA$, $H$ is isomorphic to a subobject of $E(S)$ so that we deduce $H\cong S$. Therefore $\overline{S}$ is the only maximal atom of $\CA$. 

 Conversely, when $\CA$ is a local locally noetherian Grothendieck category,  $\CA$ is local in sense of [K2, P]. Because if $(\CA,\frak m)$ is a local locally noetherian Grothendieck category, then any object $M$ of $\CA$ contains a noetherian subobject $N$. Since $N$ is noetherian, it has a simple quotient object $N/N_1$ and since $(\CA,\frak m)$ is local, we have $\frak m=\overline{N/N_1}$ so that $\frak m\in\ASupp(M)$. It now follows from [K2, Proposition 6.4] that $\CA$ is local in sense of [K2, P].    
\end{definition}

Throughout this section we assume that $(\CA,\frak m)$ is a local category, $\alpha\in\ASpec\CA$ such that $H(\alpha)$ is of dimension one and $M$ is a noetherian monoform object with $\alpha=\overline{M}$. We have the following lemmas.  

\medskip
\begin{lemma}\label{di}
If $H$ is any monoform object such that $\alpha=\overline{H}$. Then $\dim H=1$.
\end{lemma}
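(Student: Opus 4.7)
The strategy is to establish $\ASupp(H)=\ASupp(H(\alpha))$, from which $\dim H=\dim H(\alpha)=1$ follows immediately.

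First I would realize $H$ as a subobject of $H(\alpha)$. Since $\overline{H}=\alpha=\overline{H(\alpha)}$, atom-equivalence yields a common nonzero subobject $K$ of $H$ and $H(\alpha)$. Because both $H$ and $H(\alpha)$ are monoform and hence uniform (every nonzero subobject of the indecomposable injective $E(\alpha)$ is essential), the injective envelopes identify as $E(K)=E(H)=E(H(\alpha))=E(\alpha)$, presenting $H$ as a monoform subobject of $E(\alpha)$. Kanda's uniqueness of the maximal monoform subobject of $E(\alpha)$ ([K3, Theorem 3.6]) then forces $H\subseteq H(\alpha)$, so $\ASupp(H)\subseteq\ASupp(H(\alpha))$ and in particular $\dim H\leq 1$.

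For the reverse direction, I would invoke Theorem \ref{adjinj}. The object $H$ is $t_{\alpha}$-torsion-free (any nonzero subobject has class $\alpha\in\overline{\{\alpha\}}$), and by Lemma \ref{simp} both $F(H)$ and $F(H(\alpha))$ are simple objects of $\CA_{\alpha}$ representing the same atom, hence isomorphic. Therefore $GF(H)\cong GF(H(\alpha))=H(\alpha)$, exhibiting $H(\alpha)$ as the smallest faithfully $t_{\alpha}$-injective containing $H$; in particular $H(\alpha)/H$ is $t_{\alpha}$-torsion, so $\ASupp(H(\alpha)/H)\subseteq\ASpec\CA\setminus\overline{\{\alpha\}}$. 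Combining this with the inclusion $\{\beta:\alpha\leq\beta\}\subseteq\ASupp(H)$ given by Proposition \ref{alex}, and the auxiliary claim $\ASupp(H(\alpha))\subseteq\{\beta:\alpha\leq\beta\}$ (which reduces, via [K2, Proposition 4.7], to $\MinASupp(H(\alpha))=\{\alpha\}$), I would conclude $\ASupp(H(\alpha)/H)\subseteq\{\beta:\alpha<\beta\}\subseteq\ASupp(H)$. The short exact sequence $0\to H\to H(\alpha)\to H(\alpha)/H\to 0$ then yields $\ASupp(H(\alpha))=\ASupp(H)\cup\ASupp(H(\alpha)/H)=\ASupp(H)$, as required.

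The main obstacle will be justifying the auxiliary identity $\MinASupp(H(\alpha))=\{\alpha\}$ without assuming stability of $\CX(\alpha)$; Corollary \ref{minas} provides this immediately in the stable case, but in general one must leverage the monoform property of $H(\alpha)$—every nonzero subobject is atom-equivalent to $\alpha$—together with a careful subquotient analysis to exclude atoms strictly below $\alpha$ from $\ASupp(H(\alpha))$. Locality of $(\CA,\frak m)$ and the standing hypothesis that a noetherian monoform $M$ with $\overline{M}=\alpha$ exists should help here, the latter by providing a concrete noetherian instance in which MinASupp and associated atoms can be compared by standard Noetherian filtration arguments.
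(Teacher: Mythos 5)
Your first half is fine and coincides with the paper's: realizing $H$ (up to isomorphism) as a subobject of $H(\alpha)$ via $E(H)\cong E(\alpha)$ and the maximality of $H(\alpha)$ gives $\ASupp(H)\subseteq\ASupp(H(\alpha))$, hence $\dim H\leq 1$. The trouble is your lower bound. You aim at the much stronger statement $\ASupp(H)=\ASupp(H(\alpha))$, and the whole scheme hinges on the auxiliary claim $\MinASupp(H(\alpha))=\{\alpha\}$ (equivalently $\ASupp(H(\alpha))\subseteq\{\beta\mid\alpha\leq\beta\}$), which you explicitly leave unproved. This is a genuine gap: Lemma \ref{di} is stated \emph{before} the standing ``locally stable'' hypothesis enters (that assumption is imposed only afterwards, from Lemma \ref{noeth} on), so stability of $\CX(\alpha)$ is not available; and Corollary \ref{minas} would not close it anyway, since it is about noetherian objects and only yields $\alpha\in\AAss(M/t_{\alpha}(M))$, while $H(\alpha)$ need not be noetherian and [K2, Proposition 4.7] (existence of minimal atoms below a given support atom) likewise carries a noetherian-type hypothesis. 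There is also a smaller unchecked point: to conclude that $H(\alpha)/H$ is $t_{\alpha}$-torsion you must know that the isomorphism $GF(H)\cong H(\alpha)$ is compatible with the inclusion $H\subseteq H(\alpha)$; this is repairable (apply the exact localization functor to $0\To H\To H(\alpha)\To H(\alpha)/H\To 0$ and use Lemma \ref{simp}), but as written it is skipped.

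The detour is also unnecessary: for $\dim H\geq 1$ one strict chain in $\ASupp(H)$ suffices, and locality provides it directly --- this is the paper's route. Since $(\CA,\frak m)$ is local and locally noetherian, every nonzero object has $\frak m$ in its atom support (a noetherian subobject has a simple quotient), so $\frak m\in\ASupp(H')$ for every monoform $H'\in\alpha$, and Proposition \ref{alex} then gives $\alpha\leq\frak m$. By Lemma \ref{simp}, $H(\alpha)_{\alpha}$ is simple, so $\{\beta\in\ASupp(H(\alpha))\mid\beta\leq\alpha\}=\{\alpha\}$, i.e. $\alpha\in\MinASupp(H(\alpha))$; if $\alpha$ were $\frak m$, then every $\beta\in\ASupp(H(\alpha))$ would satisfy $\beta\leq\frak m$ and minimality would force $\ASupp(H(\alpha))=\{\frak m\}$, contradicting $\dim H(\alpha)=1$. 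Hence $\alpha\neq\frak m$, and $\alpha<\frak m$ is a chain inside $\ASupp(H)$, giving $\dim H\geq 1$; together with $\ASupp(H)\subseteq\ASupp(H(\alpha))$ this yields $\dim H=1$ without any claim about $\MinASupp(H(\alpha))$ beyond what Lemma \ref{simp} gives.
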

\begin{proof}
It follows from Lemma \ref{simp} that $H(\alpha)_{\alpha}$ is simple and hence $\alpha\in{\rm MinASupp}(H(\alpha))$ and since $\dim H(\alpha)=1$, we deduce that $\alpha\neq \frak m$. On the other hand, by the same reasoning $\alpha\in{\rm MinASupp}(H)$ and $\ASupp(H)\subseteq\ASupp(H(\alpha))$. Thus $\dim H=1$. 
\end{proof}

In the rest of this section $\CA$ is a locally stable category. 
\begin{lemma}\label{noeth}
If $H_{\frak m}^1(M)$ is noetherian, then $H_{\frak m}^1(M')$ is noetherian for any noetherian monoform object $M'$ with $\alpha=\overline{M'}$.
\end{lemma}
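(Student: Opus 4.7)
The plan is to exploit the fact that $M$ and $M'$, both monoform with $\overline{M}=\overline{M'}=\alpha$, share a common nonzero subobject and then compare $H_{\frak m}^1(M)$ and $H_{\frak m}^1(M')$ via the long exact sequences of local cohomology. First I would fix a common nonzero subobject $N$ of $M$ and $M'$; since $M$ is noetherian and every nonzero subobject of a monoform object is again monoform, we may take $N$ to be a noetherian monoform object with $\overline{N}=\alpha$. Recalling from the proof of Lemma \ref{di} that $\alpha\neq\frak m$, one obtains $\Gamma_{\frak m}(N)=\Gamma_{\frak m}(M)=\Gamma_{\frak m}(M')=0$: any nonzero subobject of a monoform object with atom class $\alpha$ has $\alpha$ in its associated atoms, so a nonzero subobject supported in $\{\frak m\}$ would force $\alpha=\frak m$.

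The key step is to verify that $\ASupp(M/N)\subseteq\{\frak m\}$, and symmetrically for $M'/N$. Monoformity of $M$ yields $\alpha\notin\ASupp(M/N)$, so $\ASupp(M/N)\subseteq\ASupp(M)\setminus\{\alpha\}$. I then claim $\ASupp(M)\subseteq\{\alpha,\frak m\}$. Since $\CA$ is locally stable, Corollary \ref{minas} gives $\MinASupp(M)\subseteq\AAss(M)=\{\alpha\}$, making $\alpha$ the unique minimal atom of $\ASupp(M)$. Local noetherianity, together with the description of a local category recalled in Section 4, implies $\frak m\in\ASupp(M)$. Moreover, every atom $\beta$ of $\ASpec\CA$ satisfies $\beta\leq\frak m$: any noetherian monoform representative $H$ of $\beta$ satisfies $\MinASupp(H)=\{\beta\}$ and $\frak m\in\ASupp(H)$, whence $\beta\leq\frak m$ by [K2, Proposition 4.7]. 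Thus any $\beta\in\ASupp(M)\setminus\{\alpha,\frak m\}$ would produce a chain $\alpha<\beta<\frak m$ in $\ASupp(M)$, contradicting $\dim M=1$.

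Once $\ASupp(M/N),\ASupp(M'/N)\subseteq\{\frak m\}$, both quotients are $\Gamma_{\frak m}$-torsion, so $H_{\frak m}^i(M/N)=H_{\frak m}^i(M'/N)=0$ for all $i>0$. Applying $H_{\frak m}^{\bullet}$ to the short exact sequences $0\to N\to M\to M/N\to 0$ and $0\to N\to M'\to M'/N\to 0$, the long exact sequences collapse to
\[
0\to M/N\to H_{\frak m}^1(N)\to H_{\frak m}^1(M)\to 0 \quad \text{and} \quad 0\to M'/N\to H_{\frak m}^1(N)\to H_{\frak m}^1(M')\to 0.
\]
Since $M/N$ is noetherian (a quotient of the noetherian $M$) and $H_{\frak m}^1(M)$ is noetherian by hypothesis, the first sequence exhibits $H_{\frak m}^1(N)$ as an extension of noetherian objects, hence noetherian. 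The second then realizes $H_{\frak m}^1(M')$ as a quotient of $H_{\frak m}^1(N)$, so $H_{\frak m}^1(M')$ is noetherian.

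The delicate point is the support computation: one has to combine $\dim M=1$, the fact that $\alpha$ is the unique minimal atom of $\ASupp(M)$ (from local stability), the presence of $\frak m\in\ASupp(M)$ (from locality and noetherianity), and the general bound $\beta\leq\frak m$ for every atom $\beta$ (from local noetherianity) to conclude $\ASupp(M)=\{\alpha,\frak m\}$. After this support calculation, the rest is routine manipulation of the two collapsed long exact sequences.
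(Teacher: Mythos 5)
Your proof is correct and in substance coincides with the paper's: the paper splits the argument into three steps (pass to an extension, pass to a subobject, then combine via a common nonzero subobject coming from atom-equivalence), while you merge these into one symmetric argument through the common subobject $N$, using the same key facts — $\dim M=\dim M'=1$ (Lemma \ref{di}), $\MinASupp=\AAss=\{\alpha\}$ from local stability and Corollary \ref{minas}, hence $\ASupp(M),\ASupp(M')\subseteq\{\alpha,\frak m\}$, so the quotients $M/N$, $M'/N$ are $\Gamma_{\frak m}$-torsion and the long exact sequences collapse as you state. Your more detailed justification of $\ASupp(M)=\{\alpha,\frak m\}$ (minimality of $\alpha$, $\frak m\in\ASupp(M)$, $\beta\leq\frak m$ for every atom, and the chain-length bound) is a welcome expansion of what the paper asserts tersely, but it is the same route, not a different one.
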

\begin{proof}
We prove the assertion in the three steps.
Step 1) Assume that $M'$ is a noetherian extension of $M$. It follows from Lemma \ref{di} that $\dim M'=1$. Since $\CA$ is locally stable, it follows from Corollary \ref{minas} that $\ASupp(H)=\{\alpha,\frak m\}$ for any noetherian monoform object $H$ with $\alpha=\overline{H}$. Thus Lemma \ref{simp} implies that $\ASupp(M'/M)=\{\frak m\}$ and hence $\Gamma_{\frak m}(M'/M)=M'/M$. Thus applying the functor $\Gamma_{\frak m}(-)$ to the exact sequence $0\To M\To M'\To M'/M\To 0$ induces an exact sequence $ H_{\frak m}^1(M)\To H_{\frak m}^1(M')\To 0$ which forces that $H_{\frak m}^1(M')$ is noetherian.

Step 2) Assume that $M'$ is a subobject of $M$ and so there is an exact sequence of objects 
$0\To M'\To M\To M/M'\To 0$. By the same reasoning in Step 1, $M/M'$ has finite length and so there is an exact sequence of objects $M/M'\To H_{\frak m}^1(M')\To H_{\frak m}^1(M)\To 0$ which deduces that $H_{\frak m}^1(M')$ is noetherian.

Step 3) Assume that $M'$ is any noetherian monoform object of $\CA$ such that $\alpha=\overline{M'}$. Then $M'$ has a noetherian subobject $L$ which is isomorphic to a subobject of $M$. It follows from Step 2 that $H_{\frak m}^1(L)$ is noetherian and so replacing $M$ by $L$ and using Step 1, the object $H_{\frak m}^1(M')$ is noetherian.   
\end{proof}

\medskip

\begin{corollary}\label{c}
If $H_{\frak m}^1(M)$ is not noetherian, then $H_{\frak m}^1(M')$ is not noetherian for any noetherian monoform object $M'$ with $\alpha=\overline{M'}$. 
\end{corollary}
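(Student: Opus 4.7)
The plan is to deduce this corollary directly from Lemma \ref{noeth} by contraposition, exploiting the fact that the roles of $M$ and $M'$ in that lemma are symmetric. The running hypothesis of this part of the section is that $M$ is an arbitrary noetherian monoform object representing the atom $\alpha$; the statement of Lemma \ref{noeth} uses nothing more about $M$ than this property, so the lemma applies equally well to any other noetherian monoform $M'$ with $\overline{M'}=\alpha$ taken in the role of $M$.

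Concretely, I would argue as follows. Suppose for contradiction that the conclusion fails, i.e. there exists a noetherian monoform object $M'$ with $\overline{M'}=\alpha$ such that $H_{\frak m}^1(M')$ is noetherian. Then $M'$ itself satisfies the hypotheses imposed on the base object in Lemma \ref{noeth}: it is a noetherian monoform object with $\overline{M'}=\alpha$, and $H_{\frak m}^1(M')$ is noetherian. Applying Lemma \ref{noeth} with $M'$ playing the role of $M$ and our fixed $M$ playing the role of the generic noetherian monoform object with atom $\alpha$, we conclude that $H_{\frak m}^1(M)$ is noetherian. This contradicts the standing hypothesis of the corollary, so no such $M'$ can exist.

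There is no substantive obstacle: the proof is a one-line contrapositive, and the only conceptual point to articulate is the symmetry just noted, namely that Lemma \ref{noeth} is really a statement about any two noetherian monoform objects sharing the atom $\alpha$ rather than about a particular distinguished one. Consequently the write-up should be very short, consisting essentially of the swap of labels and the appeal to Lemma \ref{noeth}.
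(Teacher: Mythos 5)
Your proposal is correct and is essentially the paper's own argument: the paper also assumes $H_{\frak m}^1(M')$ is noetherian for some noetherian monoform $M'$ with $\alpha=\overline{M'}$, applies Lemma \ref{noeth} with the roles of $M$ and $M'$ exchanged to conclude that $H_{\frak m}^1(M)$ is noetherian, and obtains a contradiction. The symmetry you point out — that Lemma \ref{noeth} only uses that its base object is a noetherian monoform representing $\alpha$ — is exactly what makes this role swap legitimate.
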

\begin{proof}
If $H_{\frak m}^1(M')$ is noetherian for some noetherian monoform object $M'$ with $\alpha=\overline{M'}$, then Lemma \ref{noeth} implies that $H_{\frak m}^1(M)$ is noetherian which is a contradiction.  
\end{proof}

\medskip

\begin{lemma}\label{not}
If ${\rm End}(M)$ is not a skew field, then $H_{\frak m}^1(M)$ is not noetherian. 
\end{lemma}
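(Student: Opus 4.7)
The plan is to produce from a nonzero non-invertible endomorphism of $M$ an infinite strictly ascending chain of subobjects of $H_{\frak m}^1(M)$. Since $M$ is monoform, any nonzero endomorphism $g\colon M\to M$ is a monomorphism (otherwise $M/\ker g\cong\mathrm{im}\,g$ would be a common nonzero subobject of $M$ and $M/\ker g$), so $\mathrm{End}(M)$ is a domain. Because it is not a skew field, there exists $f\in\mathrm{End}(M)$ which is a monomorphism but not an isomorphism, so that $C:=M/f(M)$ is a nonzero noetherian object. As in the proof of Lemma~\ref{noeth}, local stability together with Corollary~\ref{minas} gives $\ASupp(M)=\{\alpha,\frak m\}$, and hence $\ASupp(C)\subseteq\{\alpha,\frak m\}$. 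Using monoformness of $M$ once more, no monoform subobject of $C$ can lie in the atom class $\alpha$ (such a subobject would be a common nonzero subobject of $M$ and $M/f(M)$), so $\alpha\notin\AAss(C)$. Since $C$ is nonzero and noetherian, $\AAss(C)\neq\varnothing$, forcing $\AAss(C)=\{\frak m\}$. Combining this with $\MinASupp(C)\subseteq\AAss(C)$ (by Corollary~\ref{minas} and local stability) and the maximality of $\frak m$, one obtains $\ASupp(C)=\{\frak m\}$. A short filtration argument using the isomorphisms $f^i(M)/f^{i+1}(M)\cong M/f(M)$, which hold because $f$ is injective, extends this to $\ASupp(M/f^n(M))=\{\frak m\}$ for every $n\geq 1$; in particular $H_{\frak m}^i(M/f^n(M))=0$ for all $i>0$.

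Next I would set up the long exact sequence. Any nonzero subobject of $M$ is itself monoform with atom class $\alpha\neq\frak m$ (by Lemma~\ref{di}), so $\Gamma_{\frak m}(M)=0$. Applying $\Gamma_{\frak m}$ to the short exact sequence $0\to M\xrightarrow{f^n}M\to M/f^n(M)\to 0$ therefore yields
\[
0\longrightarrow M/f^n(M)\xrightarrow{\;\delta_n\;}H_{\frak m}^1(M)\xrightarrow{\;(f^*)^n\;}H_{\frak m}^1(M)\longrightarrow 0
\]
for every $n\geq 1$.

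Finally, I would compare these sequences for consecutive exponents $n$ and $n+1$ by means of the morphism of short exact sequences $(\mathrm{id}_M,f,c_n)$, where $c_n\colon M/f^n(M)\to M/f^{n+1}(M)$ is the injection induced by $f$. Naturality of the connecting map then forces $\delta_n=\delta_{n+1}\circ c_n$, so the subobjects $\delta_n(M/f^n(M))$ form an ascending chain in $H_{\frak m}^1(M)$ whose successive quotients are isomorphic to $M/f(M)\neq 0$. Hence this chain is strictly ascending and $H_{\frak m}^1(M)$ cannot be noetherian. I expect the main obstacle to be precisely this last step: verifying that the images $\delta_n(M/f^n(M))$ really assemble into a strictly ascending chain rather than a sequence of merely isomorphic but incomparable subobjects, which is exactly what the naturality of $\delta$ under the morphism of short exact sequences provides.
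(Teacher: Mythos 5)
Your proof is correct, and it shares the paper's skeleton: monoformness makes a non-invertible $f\in{\rm End}(M)$ injective but not surjective, $C=M/f(M)$ is a nonzero noetherian object with $\ASupp(C)=\{\frak m\}$ (the paper gets this from $C_{\alpha}=0$ via Lemma \ref{simp} together with Corollary \ref{minas}; you rule out $\alpha\in\AAss(C)$ directly from the definition of monoform --- both work), $\Gamma_{\frak m}(M)=0$, and the long exact sequence of $\Gamma_{\frak m}$ applied to $0\To M\stackrel{f}\To M\To C\To 0$. Where you genuinely diverge is the endgame. The paper argues by contradiction using only the case $n=1$: from $0\To C\To H_{\frak m}^1(M)\stackrel{\underline{f}}\To H_{\frak m}^1(M)\To 0$ (surjectivity of $\underline{f}$ uses $H_{\frak m}^1(C)=0$), the noetherian hypothesis stabilizes the chain $\ker\underline{f}^{\,n}$, and pulling $C$ back along the surjection $\underline{f}^{\,n}$ forces $C=0$, a contradiction. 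You instead exhibit the non-stabilizing chain directly: the connecting morphisms $\delta_n$ for the powers $f^n$ are injective because $\Gamma_{\frak m}(M)=0$ and $\Gamma_{\frak m}(M/f^n(M))=M/f^n(M)$ (your extra verification $\ASupp(M/f^n(M))=\{\frak m\}$, via the filtration with factors $f^i(M)/f^{i+1}(M)\cong C$), and naturality of the connecting map gives $\delta_n=\delta_{n+1}\circ c_n$ with $\Coker(c_n)\cong C\neq 0$, so the subobjects $\delta_n(M/f^n(M))=\ker\underline{f}^{\,n}$ strictly increase. Your route is a direct proof that never needs surjectivity of $\underline{f}$ (so the vanishing $H_{\frak m}^1(M/f^n(M))=0$, though true, is not actually essential to your chain argument), at the price of carrying the support computation through all powers of $f$; the paper's Fitting-type contradiction is shorter but leans on that surjectivity and on the noetherian assumption it is refuting. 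Both arguments are sound and rest on the same preparatory lemmas.
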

\begin{proof}
 Assume that $H_{\frak m}^1(M)$ is noetherian. There exists a nonzero morphism $f:M\To M$ which is not isomorphism. But since 
$M$ is monoform, $f$ is injective and so $f$ is not surjective. Then there is an exact sequence $0\To M\stackrel{f}\To M\To C\To 0$ and it follows from Lemma \ref{simp} that $C_{\alpha}=0$. Therefore using Corollary \ref{minas}, the object $C$ has finite length. On the other hand, since $\dim M=1$, it follows from Lemma \ref{gammon} that $\Gamma_{\frak m}(M)=0$. Now applying the functor $\Gamma_{\frak m}(-)$ induces the following exact sequence of objects of $\CA$
$$0\To C\To H_{\frak m}^1(M)\stackrel{\underline{f}}\longrightarrow H_{\frak m}^1(M)\To 0.$$  Since $H_{\frak m}^1(M)$ is noetherian, the ascending chain $\ker \underline{f}\subseteq\ker {\underline{f}}^2\subseteq\dots$ of subobjects of $H_{\frak m}^1(M)$ stabilizes and so there exists a positive integer $n$ such that $\ker\underline{f}^n=\ker\underline{f}^{n+1}$. On the other hand, $\underline{f}^n$ is surjective and hence there exists a subobject $X$ of $H_{\frak m}^1(M)$ such that $C=\underline{f}^n(X)$. Therefore $X\subseteq\ker\underline{f}^{n+1}=\ker\underline{f}^n$ and so $C=0$ which is a contradiction. 
\end{proof}

 \medskip

\begin{theorem}\label{nonvanish}
Assume that $N$ is a noetherian object of $\CA$ of dimension one and for any $\alpha\in\AAss(N/\Gamma_{\frak m}(N))$ there exists a monoform object $M$ such that ${\rm End}(M)$ is not a skew field and $\alpha=\overline{M}$. Then $H_{\frak m}^1(N)$ is not noetherian. 
\end{theorem}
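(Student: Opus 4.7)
The plan is to reduce to the case $\Gamma_{\frak m}(N)=0$ without changing $H_{\frak m}^1(N)$ or the hypotheses, then locate inside $N$ a noetherian monoform subobject $H$ associated to some $\alpha\in\AAss(N)$ whose first local cohomology already fails to be noetherian (via Lemma \ref{not} and Corollary \ref{c}), and finally propagate this failure up to $H_{\frak m}^1(N)$ through the long exact sequence of $0\to H\to N\to N/H\to 0$.

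For the reduction I would apply $\Gamma_{\frak m}$ to $0\to\Gamma_{\frak m}(N)\to N\to N/\Gamma_{\frak m}(N)\to 0$. Because $\Gamma_{\frak m}$ is idempotent and preserves injective objects, $\Gamma_{\frak m}(N)$ admits an injective resolution consisting of $\Gamma_{\frak m}$-torsion objects, so $H_{\frak m}^i(\Gamma_{\frak m}(N))=0$ for $i>0$; the long exact sequence then yields an isomorphism $H_{\frak m}^1(N)\cong H_{\frak m}^1(N/\Gamma_{\frak m}(N))$. Replacing $N$ by $N/\Gamma_{\frak m}(N)$ I may assume $\Gamma_{\frak m}(N)=0$: noetherianness is preserved, the hypothesis on associated atoms transfers verbatim (as it was originally stated for $\AAss(N/\Gamma_{\frak m}(N))$), and the dimension remains equal to one, for otherwise $\ASupp(N)\subseteq\{\frak m\}$ together with $\Gamma_{\frak m}(N)=0$ would force $N=0$, contradicting $\dim N=1$.

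Now pick $\alpha\in\AAss(N)$ (nonempty since $N$ is a nonzero noetherian object) and a noetherian monoform subobject $H\subseteq N$ with $\overline{H}=\alpha$. The hypothesis furnishes a monoform object $M$ with $\overline{M}=\alpha$ and ${\rm End}(M)$ not a skew field; Lemma \ref{not} then gives that $H_{\frak m}^1(M)$ is not noetherian, and Corollary \ref{c} transfers this conclusion to the noetherian monoform $H$. Applying $\Gamma_{\frak m}$ to $0\to H\to N\to N/H\to 0$ and using $\Gamma_{\frak m}(H)=0$, the long exact sequence begins
$$0\to\Gamma_{\frak m}(N/H)\to H_{\frak m}^1(H)\to H_{\frak m}^1(N),$$
and since $N/H$ is a noetherian quotient of $N$, the subobject $\Gamma_{\frak m}(N/H)$ is a noetherian subobject of the non-noetherian $H_{\frak m}^1(H)$; consequently the quotient $H_{\frak m}^1(H)/\Gamma_{\frak m}(N/H)$ is not noetherian, and as it embeds into $H_{\frak m}^1(N)$, neither is $H_{\frak m}^1(N)$. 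The only delicate point is the reduction step: one must verify that the standing assumption $\dim H(\alpha)=1$ of the section remains applicable to each associated atom of the reduced object $N/\Gamma_{\frak m}(N)$, so that Lemma \ref{not} and Corollary \ref{c} can legitimately be invoked. Everything else is a formal manipulation of long exact sequences combined with the elementary fact that a noetherian object cannot have a non-noetherian subobject.
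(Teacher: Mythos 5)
Your argument is correct and reaches the conclusion by a genuinely different route from the paper. You share the paper's first step (reducing to $\Gamma_{\frak m}(N)=0$ via $H_{\frak m}^1(N)\cong H_{\frak m}^1(N/\Gamma_{\frak m}(N))$), but where the paper then takes a monoform filtration $0=N_0\subseteq N_1\subseteq\dots\subseteq N_n=N$ from [K3, Theorem 2.9], inducts on its length, and in the base case $0\to M_1\to N_2\to M_2\to 0$ splits into the two cases $\Gamma_{\frak m}(M_2)=0$ or $\Gamma_{\frak m}(M_2)=M_2$ (the latter using finite length of $M_2$), you bypass the filtration entirely: you choose one monoform subobject $H\subseteq N$ attached to some $\alpha\in\AAss(N)$, get non-noetherianness of $H_{\frak m}^1(H)$ from Lemma \ref{not} and Corollary \ref{c}, and then use the exact sequence $0\to\Gamma_{\frak m}(N/H)\to H_{\frak m}^1(H)\to H_{\frak m}^1(N)$ together with the observation that $\Gamma_{\frak m}(N/H)$ is noetherian simply because $N/H$ is. This replaces the induction and the case analysis on the monoform cokernel by the closure of noetherian objects under subobjects and extensions, and it only invokes the hypothesis at a single associated atom; the paper's filtration argument gives no extra generality here, so your version is the more economical one. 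Two small remarks: your justification that "the dimension remains one" is looser than needed (dimension zero of $N/\Gamma_{\frak m}(N)$ does not literally mean $\ASupp\subseteq\{\frak m\}$), but in fact your argument never uses $\dim N'=1$, only that $N'\neq 0$ (which your reasoning does give, since $N'=0$ would force $\ASupp(N)\subseteq\{\frak m\}$, contradicting $\dim N=1$); and the delicate point you flag — that the section's standing hypotheses ($\dim H(\alpha)=1$, noetherianness of the monoform $M$ in the hypothesis) must apply at the chosen atom — is exactly the same implicit reliance present in the paper's own proof, so it is not a gap relative to it. Note also that $\frak m$ itself cannot lie in $\AAss(N/\Gamma_{\frak m}(N))$, since a monoform in the class $\frak m$ contains a simple subobject which would land in $\Gamma_{\frak m}$, so your chosen $\alpha$ is automatically non-maximal.
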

\begin{proof}
We notice that $H_{\frak m}^1(N)\cong H_{\frak m}^1(N/\Gamma_{\frak m}(N))$ and so we may assume that $\Gamma_{\frak m}(N)=0$. We also observe that for any $\frak m\neq\alpha\in\ASupp(N)$, we have $\alpha\in{\rm MinASupp}(N)$ and so using Corollary \ref{minas}, $\alpha\in\AAss(N)$. Since $N$ is noetherian, in view of [K3, Theorem 2.9] that there is a filtration of subobjects of $N$
$$0=N_0 \subseteq N_1\subseteq\dots\subseteq N_n=N$$
such that $N_i/N_{i-1}=M_i$ is a monoform object for all $i=1,\dots, n$.  We proceed by induction on $n$ and so we may assume that $n=2$. The exact sequence $0\To M_1\To N_2\To M_2\To 0$ implies that $\Gamma_{\frak m}(M_1)=0$; and hence it follows from Corollary \ref{c} and Lemma \ref{not} that $H_{\frak m}^1(M_1)$ is not noetherian. In view of Lemma \ref{gammon} we have two cases either $\Gamma_{\frak m}(M_2)=0$ or $\Gamma_{\frak m}(M_2)=M_2$. In the first case, applying the functor $\Gamma_{\frak m}(-)$ to the above exact sequence, we conclude that $H_{\frak m}^1(M_1)$ is a non-noetherian subobject of $H_{\frak m}^1(N_2)$ and so $H_{\frak m}^1(N_2)$ is not noetherian. In the second case, $M_2$ has finite length and so there is the following exact sequence of objects of $\CA$ 
$$0\To M_2\To H_{\frak m}^1(M_1)\To H_{\frak m}^1(N_2)\To 0.$$ Now $H_{\frak m}^1(N_2)$ is not noetherian because $H_{\frak m}^1(M_1)$ is not noetherian.
\end{proof}

According to [Ga, p. 428, Proposition 10], if $A$ is a commutative noetherian
ring, then $\CA=$Mod-$A$ is a locally stable category. Moreover, in this case $\CA$ is locally noetherian and $\CA$ is local if and only if $A$ is a local ring. We also observe that the condition on atoms in Theorem \ref{nonvanish} always holds in general. To be more precise, according to [K2, Proposition 2.9], any $\alpha\in\ASpec(A$-Mod$)$ is corresponding to a prime ideal $\frak p\in\Spec A$, that is $\alpha=\overline{A/\frak p}$ and $A/\frak p$ is a monoform module. If $\frak p$ is not maximal ideal, then ${\rm End}_A(A/\frak p)\cong A/\frak p$ is not a field.

\section{Abstract Local cohomology functors}
  
In this section we first recall some definitions and notation from the theory of triangulated categories. For more details we refer readers to [YY]. Let $\CT$ and $\CT'$ be triangulated categories. According to [N, Definition 2.1.1], an additive functor $\delta:\CT\To\CT'$ is called a {\it triangulated functor} provided that $\delta(X[1])\cong\delta(X)[1]$ for any $X\in\CT$ and $\delta$ preserves triangles. For the functor $\delta$, we define full subcategories of $\CT'$ and $\CT$ as  
$${\rm Im}(\delta)=\{X'\in\CT'|\hspace{0.1cm} X'\cong\delta(X) \hspace{0.1cm}{\rm for} \hspace{0.1cm}{\rm some}\hspace{0.1cm} X\in\CT\},$$ 
$${\rm Ker}(\delta)=\{X\in\CT|\hspace{0.1cm}\delta(X)\cong 0\}.$$
	The notion stable $t$-structure is introduced by Miyachi. Recall that a full subcategory of a triangulated category is called a triangulated subcategory if it is closed closed under shift functor [1] and making triangles.  
	\begin{definition}
	A pair $(\CU,\CV)$ of full triangulated subcategories of a triangulated category $\CT$ is called a {\it stable t-structure} on $\CT$ if it satisfies the following conditions.
	
	\begin{enumerate}
		\item $\Hom_{\CT}(U,V)=0$.
		\item For any $X\in\CT$, there is a triangle $U\To X\To V\To U[1]$ with $U\in\CU$ and $V\in\CV$.
	\end{enumerate}
	\end{definition}
	The following theorem due to Miyachi is a key to our argument in this section.

	\begin{theorem}\label{miyachi}{\rm ([M, Proposition 2.6])}
	Let $\CT$ be a triangulated category and $\CU$ be a full triangulated subcategory of $\CT$. Then the following conditions are equivalent for $\CU$. 
		\begin{itemize}

		\item[(1)] There is a full subcategory $\CV$ of $\CT$ such that $(\CU,\CV)$ is a stable t-structure on $\CT$.
		\item[(2)] The natural embedding functor $i:\CU\To \CT$ has a right adjoint $\rho:\CT\To\CU$. 
		\end{itemize}

		If it is the case, setting $\delta=i\circ\rho:\CT\To\CT$, we have equalities 
		
	\begin{center}
	$\CU=\im(\delta)$ and $\CV=\CU^\bot=\Ker(\delta)$.
	\end{center}
	\end{theorem}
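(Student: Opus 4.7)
The plan is to treat the two implications in turn, both resting on a single bookkeeping observation: given any triangle $U\To X\To V\To U[1]$ with $U\in\CU$, $V\in\CV$ and $\Hom_{\CT}(\CU,\CV)=0$, applying $\Hom_{\CT}(U',-)$ for $U'\in\CU$ kills the two outer terms (both $\Hom(U',V[-1])$ and $\Hom(U',V)$ vanish), so the connecting map $\Hom(U',U)\To\Hom(U',X)$ is an isomorphism. This is the one fact I would invoke repeatedly.

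For (1)$\Rightarrow$(2) I would set $\rho(X):=U$ where $U\To X\To V\To U[1]$ is the triangle furnished by the stable $t$-structure. The observation above shows $\rho(X)$ is unique up to a unique isomorphism; for any $f:X\To Y$ the composite $\rho(X)\To X\To Y$ lifts uniquely across $\rho(Y)\To Y$, which makes $\rho$ a functor. The same natural isomorphism $\Hom_{\CT}(i(U'),X)\cong\Hom_{\CU}(U',\rho(X))$ is precisely the adjunction.

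For (2)$\Rightarrow$(1) I would define $\CV:=\CU^{\perp}=\{V\in\CT\mid \Hom_{\CT}(\CU,V)=0\}$; this is a triangulated subcategory because closure under the shift and under cones follows at once from the long exact $\Hom$ sequence. For each $X$, complete the counit $\varepsilon_X:\delta(X)\To X$ (with $\delta=i\rho$) to a triangle $\delta(X)\To X\To V\To \delta(X)[1]$. To see $V\in\CV$, apply $\Hom_{\CT}(U,-)$ for $U\in\CU$: the adjunction identifies $\Hom(U,\delta(X))\To\Hom(U,X)$ with an isomorphism, and the same holds with $U$ replaced by $U[1]$, so the four-term segment of the long exact sequence forces $\Hom(U,V)=0$.

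For the final identifications: $\im(\delta)\subseteq\CU$ is tautological. Conversely, since $i$ is fully faithful, the unit $\id_{\CU}\To\rho i$ is an isomorphism, whence $\delta(i(U'))\cong i(U')$, giving $\CU\subseteq\im(\delta)$. The equality $\Ker(\delta)=\CU^{\perp}$ is Yoneda-plus-adjunction: $\rho(V)=0$ iff $\Hom_{\CU}(U',\rho(V))=0$ for all $U'\in\CU$ iff $\Hom_{\CT}(i(U'),V)=0$. The main subtlety throughout is verifying functoriality of $\rho$ in the first direction and membership $V\in\CU^{\perp}$ in the second, and both reduce to the bookkeeping observation of the opening paragraph.
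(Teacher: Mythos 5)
The paper never proves this statement itself; it is quoted directly from Miyachi ([M, Proposition 2.6]) and used as a black box, so there is no internal proof to compare with. Your argument is the standard proof of Miyachi's result and is essentially correct: the opening observation (that $\Hom_{\CT}(U',U)\To\Hom_{\CT}(U',X)$ is an isomorphism for every $U'\in\CU$) does indeed carry both the uniqueness/functoriality of $\rho$ in (1)$\Rightarrow$(2) and, via the counit triangle, the membership of the cone in $\CU^{\perp}$ in (2)$\Rightarrow$(1); note only that for the term $\Hom_{\CT}(U,\delta(X)[1])$ in the long exact sequence it is $U[-1]$, not $U[1]$, that you must substitute, which is harmless since $\CU$ is closed under both shifts. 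Two small items would complete the write-up. First, the displayed equalities claim that the subcategory $\CV$ furnished by (1) coincides with $\CU^{\perp}$, and you only prove $\Ker(\delta)=\CU^{\perp}$ with $\CV$ \emph{defined} to be $\CU^{\perp}$ in the other direction; the missing inclusion $\CU^{\perp}\subseteq\CV$ is quick: if $X\in\CU^{\perp}$ and $U\To X\To V\To U[1]$ is the decomposition triangle, then $U\To X$ is zero, so $V\cong X\oplus U[1]$, and the split inclusion $U[1]\To V$ is a morphism from $\CU$ to $\CV$, hence zero, giving $U\cong 0$ and $X\cong V\in\CV$. Second, in identifying $\Ker(\delta)$ you implicitly use that $i$ reflects zero objects (so $i\rho(V)\cong 0$ iff $\rho(V)\cong 0$), which full faithfulness supplies; it is worth saying explicitly. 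Neither point is a genuine gap.
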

	
	We now define abstract local cohomology functor which is a main theme of this section. 
	
	\begin{definition}
	Let $\CT=\mathcal{D}^+(\CA)$ be the derived category of all left bounded complexes of objects of $\CA$ and let $\delta:\CT\To\CT$ be a triangle functor. $\delta$ is said to be an {\it abstract local cohomology functor} if the following conditions are satisfied.
	
	\begin{enumerate}
		\item The natural embedding functor $i:\im(\delta)\To \CT$ has a right adjoint $\rho:\CT\To\im(\delta)$ and $\delta\cong i\circ\rho$ (Hence, by Miyachi's Theorem, $(\im(\delta),\Ker(\delta))$ is a stable t-structure on $\CT$.)
		\item The t-structure $(\im(\delta),\Ker(\delta))$ divides indecomposable injective objects, by which we mean that each indecomposable injective object in $\CA$ belongs to either $\im(\delta)$ or $\Ker(\delta)$.
		\end{enumerate}

	\end{definition}

In the proof of the following proposition, we use some techniques of [YY, Example 1.4].
\medskip
\begin{proposition}\label{radab}
If $\gamma$ is a left exact radical functor preserving injective objects, then ${\bf R}\gamma$ is an abstract local cohomology. 
\end{proposition}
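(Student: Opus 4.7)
The plan is to verify the two axioms defining an abstract local cohomology functor. The pivotal observation is that, because $\gamma$ preserves injectives, for any injective $E$ the subobject $\gamma(E)$ is itself injective and hence splits off as a direct summand, $E=\gamma(E)\oplus E'$. A short computation shows $\gamma(E')\subseteq\gamma(E)\cap E'=0$, so $E'$ is a $\gamma$-torsion-free injective. Thus every injective object decomposes canonically into a $\gamma$-torsion and a $\gamma$-torsion-free piece, both injective.

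I would apply this termwise to an injective resolution $I$ of an arbitrary $X\in\D^+(\CA)$: the resulting degreewise split exact sequence
\[
0\to\gamma(I)\to I\to I'\to 0
\]
is a short exact sequence of complexes of injective objects, with $\gamma(I)$ a complex in $\CT_\gamma$ and $I'$ a complex in $\CF_\gamma$. This yields a distinguished triangle
\[
{\bf R}\gamma(X)\to X\to I'\to {\bf R}\gamma(X)[1]
\]
in $\D^+(\CA)$ that will serve as the canonical decomposition relative to the pair $\CU=\im({\bf R}\gamma)$ and $\CV=\Ker({\bf R}\gamma)$. Evaluating ${\bf R}\gamma$ on $I'$ termwise via Lemma \ref{monpres}(1) gives ${\bf R}\gamma(I')=\gamma(I')=0$, so $I'\in\CV$, while ${\bf R}\gamma(X)\in\CU$ by definition. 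Conversely, for any $V\in\CV$ the torsion part $\gamma(I_V)$ of any injective resolution is an acyclic, bounded-below complex of injectives and therefore contractible, so $V$ itself is quasi-isomorphic to a complex of $\gamma$-torsion-free injectives.

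To apply Theorem \ref{miyachi} I would then verify the orthogonality $\Hom_{\D^+(\CA)}(U,V)=0$ for $U\in\CU$ and $V\in\CV$. Representing $U$ by a complex of $\gamma$-torsion injectives (namely $\gamma(I_Y)$ for some preimage $Y$) and $V$ by a complex of $\gamma$-torsion-free injectives as above, the $\Hom$ in the derived category coincides with the one in the homotopy category, and any chain map between these two complexes is forced to vanish componentwise because $\Hom_\CA(T,F)=0$ whenever $T\in\CT_\gamma$ and $F\in\CF_\gamma$. This produces a stable $t$-structure $(\CU,\CV)$, and Theorem \ref{miyachi} supplies a right adjoint $\rho$ to the inclusion $i\colon\CU\hookrightarrow\D^+(\CA)$; the identification ${\bf R}\gamma\cong i\circ\rho$ is built into the triangle above.

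The second axiom, dividing indecomposable injective objects, is an immediate consequence of Lemma \ref{monpres}(1): for each $\alpha\in\ASpec\CA$, either $\gamma(E(\alpha))=E(\alpha)$, so that $E(\alpha)\cong{\bf R}\gamma(E(\alpha))\in\CU$, or $\gamma(E(\alpha))=0$, whence ${\bf R}\gamma(E(\alpha))=0$ and $E(\alpha)\in\CV$. The main point requiring care is the contractibility argument that places arbitrary objects of $\CV$ in the correct form for the $\Hom$-vanishing, together with checking that ${\bf R}\gamma$ is genuinely a triangulated functor on $\D^+(\CA)$; the rest reduces to bookkeeping around the splitting $E=\gamma(E)\oplus E'$.
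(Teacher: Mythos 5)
Your argument is correct, and it reaches the conclusion by a more self-contained route than the paper does. The paper first invokes Theorem \ref{radgam} to write $\gamma=\Gamma_W$ for the open set $W=U_\gamma$, then uses Lemma \ref{gammon} to get $\Hom_{\CA}(E(\alpha),E(\beta))=0$ for $\alpha\in W$, $\beta\notin W$, uses Lemma \ref{monpres} for the division of indecomposable injectives, and delegates the actual construction of the stable t-structure to ``the same argument as in [YY, Remark 4]''. You instead stay entirely at the level of the torsion theory $(\CT_\gamma,\CF_\gamma)$: the splitting $E=\gamma(E)\oplus E'$ with $\gamma(E')=0$, the termwise decomposition $0\to\gamma(I)\to I\to I'\to 0$ of an injective resolution giving the decomposition triangle, contractibility of the acyclic complex $\gamma(I_V)$ to represent every object of $\Ker({\bf R}\gamma)$ by torsion-free injectives, and the orthogonality $\Hom_{\CA}(T,F)=0$ for $T\in\CT_\gamma$, $F\in\CF_\gamma$ (which for a left exact radical follows since the image of such a map is torsion inside a torsion-free object), after which Theorem \ref{miyachi} applies; the division of indecomposable injectives comes from Lemma \ref{monpres}(1) exactly as in the paper. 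What your version buys is that it reproves, in the Grothendieck setting, the content the paper imports from [YY, Remark 4] (which is stated there for Mod-$A$), and it never needs atoms, $U_\gamma$, or the identification $\gamma=\Gamma_W$; what the paper's version buys is brevity and the explicit identification ${\bf R}\gamma={\bf R}\Gamma_W$, which is the form used later in Theorem \ref{tttt}. The two bookkeeping points you flag are genuinely routine: $\im({\bf R}\gamma)$ coincides with the complexes quasi-isomorphic to bounded-below complexes of torsion injectives, which is closed under shifts and mapping cones, so it is a full triangulated subcategory as Theorem \ref{miyachi} requires, and the natural isomorphism ${\bf R}\gamma\cong i\circ\rho$ follows from the uniqueness (and functoriality) of the decomposition triangle in a stable t-structure.
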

\begin{proof}
 It follows from Theorem \ref{radgam} that there is an open subset $W$ of $\ASpec\CA$ such that $\gamma=\Gamma_W$. For any $\alpha\in W$ and $\beta\in\ASpec\CA\setminus W$, according to Lemma \ref{gammon}, we have $\ASupp(E(\alpha))\subseteq W$ and hence $\Hom_{\CA}(E(\alpha),E(\beta))=0$. On the other hand, Lemma \ref{monpres} implies that $(\im{\bf R}$${\Gamma_W},\Ker{\bf R}\Gamma_W)$ divides indecomposable injective objects. Now the same argument as in [YY, Remark 4] deduces that the functor $\gamma$ is an abstract local cohomology.   
\end{proof}

The following proposition is similar to [YY, Lemma 2.7] where $A$ is a noetherian commutative ring and $\CA=$Mod-$A$. The most important point of view about Mod-$A$ is the fact that Hom preserves localization with respect to prime ideals when the first component is a finitely generated $A$-module, but this argument for an arbitrary locally noetherian Grothendieck category is not reasonable. Because of this, in order to prove the proposition, despite some analogous techniques of [YY, Lemma 2.7], the details of the proof is different in many places. The same fact also holds for Theorem \ref{tttt}.

\medskip

\begin{proposition}
Let $X\in \D^+(\CA)$ and $W$ be an open subset of $\ASpec\CA$ such that $\Gamma_W$ preserves injective objects. Then
\begin{itemize}
	
	\item [(1)]  $X\cong 0$ if and only if ${\bf R}\Hom_{\CA_{\alpha}}(H(\alpha)_{\alpha},X_{\alpha})=0$ for all $\alpha\in\ASpec\CA$.
	
\item [(2)] $X\in\im({\bf R}\Gamma_W)$ if and only if ${\bf R}\Hom_{\CA_{\beta}}(H(\beta)_{\beta},X_{\beta})=0$ for all $\beta\in\ASpec\CA\setminus W$.

\item [(3)] If $\CA$ is locally stable, then $X\in\Ker({\bf R}\Gamma_W)$ if and only if for all $\alpha\in W$ we have 
${\bf R}\Hom_{\CA_{\alpha}}(H(\alpha)_{\alpha},X_{\alpha})=0$.

\end{itemize}
\end{proposition}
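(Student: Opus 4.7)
The plan is to reduce all three assertions to a single Bass-number style computation based on the minimal injective resolution of $X$. Because $\CA$ is locally noetherian, we may replace $X$ by a minimal injective resolution $I^{\bullet}$ and, by Matlis' theorem, write
$$I^n=\bigoplus_{\gamma\in\ASpec\CA}E(\gamma)^{(\mu^n(\gamma))}.$$
Since $\Gamma_W$ preserves injective objects, Lemma \ref{gammon} gives ${\bf R}\Gamma_W(X)\simeq\Gamma_W(I^{\bullet})$, whose $n$-th component is $\bigoplus_{\gamma\in W}E(\gamma)^{(\mu^n(\gamma))}$. The localization functor $(-)_{\alpha}:\CA\To\CA_{\alpha}$ is exact and preserves injectives (since $\CX(\alpha)$ is localizing), so $I^{\bullet}_{\alpha}$ is an injective resolution of $X_{\alpha}$ in $\CA_{\alpha}$, and thus ${\bf R}\Hom_{\CA_{\alpha}}(H(\alpha)_{\alpha},X_{\alpha})\simeq\Hom_{\CA_{\alpha}}(H(\alpha)_{\alpha},I^{\bullet}_{\alpha})$.

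The heart of the argument is the computation that $\Hom_{\CA_{\alpha}}(H(\alpha)_{\alpha},E(\gamma)_{\alpha})\neq 0$ if and only if $\gamma=\alpha$, and that in that case it is the division ring ${\rm End}_{\CA_{\alpha}}(H(\alpha)_{\alpha})$. By Lemma \ref{simp} the object $H(\alpha)_{\alpha}$ is simple in $\CA_{\alpha}$ and its injective envelope is $E(\alpha)_{\alpha}$, which handles $\gamma=\alpha$. For $\gamma\neq\alpha$, either $E(\gamma)_{\alpha}=0$ or it is a nonzero indecomposable injective of $\CA_{\alpha}$ whose simple socle corresponds to a different atom and hence cannot receive a nonzero map from $H(\alpha)_{\alpha}$. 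Combining this with the fact that minimality of $I^{\bullet}$ transports to $I^{\bullet}_{\alpha}$ forces the differentials in $\Hom_{\CA_{\alpha}}(H(\alpha)_{\alpha},I^{\bullet}_{\alpha})$ to vanish, yielding
$$H^n\bigl({\bf R}\Hom_{\CA_{\alpha}}(H(\alpha)_{\alpha},X_{\alpha})\bigr)\cong{\rm End}_{\CA_{\alpha}}(H(\alpha)_{\alpha})^{(\mu^n(\alpha))}.$$
In particular, this RHom is acyclic exactly when $\mu^n(\alpha)=0$ for all $n$.

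Granting this, the three parts reduce to bookkeeping on the Bass numbers. For (1), vanishing of the RHom at every $\alpha$ amounts to $\mu^n(\gamma)=0$ for all $\gamma$ and $n$, which is equivalent to $I^{\bullet}=0$, i.e.\ $X\simeq 0$. For (2), the canonical triangle ${\bf R}\Gamma_W(X)\To X\To C\To {\bf R}\Gamma_W(X)[1]$ shows that $X\in\im({\bf R}\Gamma_W)$ if and only if $C\simeq 0$; the Bass numbers of $C$ are precisely $\mu^n(\gamma)$ for $\gamma\notin W$, so this is the vanishing of the RHom at every $\beta\in\ASpec\CA\setminus W$. For (3), $X\in\Ker({\bf R}\Gamma_W)$ means $\Gamma_W(I^{\bullet})=0$, equivalently $\mu^n(\gamma)=0$ for all $\gamma\in W$, which by the computation is the vanishing of the RHom at every $\alpha\in W$.

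The principal obstacle will be the central $\Hom$ computation, particularly the vanishing $\Hom_{\CA_{\alpha}}(H(\alpha)_{\alpha},E(\gamma)_{\alpha})=0$ when $\gamma\neq\alpha$ but $\gamma\leq\alpha$ (so $E(\gamma)_{\alpha}$ is a nonzero injective of $\CA_{\alpha}$), together with justifying that minimality of $I^{\bullet}$ is preserved by localization. I expect the locally stable hypothesis in (3) to enter precisely here: via Proposition \ref{stab}, it forces each $\CX(\alpha)$ to be stable, so that indecomposable injectives of $\CA_{\alpha}$ are exactly the localizations of suitable $E(\beta)$ and carry distinct simple socles, blocking unwanted nonzero homomorphisms from $H(\alpha)_{\alpha}$ and guaranteeing the Bass-number reading is accurate on all of $W$.
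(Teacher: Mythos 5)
Your approach via minimal injective resolutions and Bass numbers is genuinely different from the paper's, but it has a gap that is exactly where the real content lies, and I don't believe it can be closed under the stated hypotheses.

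The load-bearing step is your claim that $\Hom_{\CA_{\alpha}}(H(\alpha)_{\alpha},E(\gamma)_{\alpha})=0$ whenever $\gamma\neq\alpha$ and $E(\gamma)_{\alpha}\neq 0$, which you justify by saying that the simple socle of $E(\gamma)_{\alpha}$ ``corresponds to a different atom.'' This is not automatic. When $\gamma\leq\alpha$, one can indeed check that $t_{\alpha}(E(\gamma))=0$, so $E(\gamma)$ embeds essentially in $GF(E(\gamma))$ and the socle of $E(\gamma)_{\alpha}$ is controlled by $\AAss(E(\gamma))=\{\gamma\}$; fine. But when $\gamma\not\leq\alpha$, one may still have $E(\gamma)_{\alpha}\neq 0$ unless one can bound $\ASupp(E(\gamma))$: in that case $t_{\alpha}(E(\gamma))\neq 0$, and after killing the torsion there is no reason the socle of the image should avoid $\alpha$. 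Ruling this out is precisely what requires $\MinASupp(E(\gamma))=\{\gamma\}$, i.e.\ stability of $\CX(\alpha)$. That hypothesis is present only in part (3). In other words, if your Bass-number calculus held under the hypotheses of (1) and (2) it would already be doing the hard work of (3) --- yet the paper proves (1) and (2) without locally stable, so your method as stated would be proving a strictly weaker version of those parts. The paper's proof of (1) avoids this entirely: it takes the minimal cohomological degree $i_0$ with $H^{i_0}(X)\neq 0$, picks $\alpha\in\AAss(H^{i_0}(X))$, and uses the degeneration $H^{i_0}({\bf R}\Hom_{\CA_\alpha}(H(\alpha)_\alpha,X_\alpha))\cong\Hom_{\CA_\alpha}(H(\alpha)_\alpha,H^{i_0}(X_\alpha))$ directly --- no control over off-diagonal Hom groups is needed. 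Part (2) likewise proceeds through the stable $t$-structure triangle ${\bf R}\Gamma_W(X)\to X\to V\to$ and kills $V$ by the same minimal-degree argument applied to the injective resolution $I/\Gamma_W(I)$, again dodging the off-diagonal computation.

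A secondary issue you flag but do not resolve: passage of minimality of injective resolutions under $(-)_{\alpha}$. The quotient functor is exact and, in the locally noetherian setting, preserves injectives, but it does not preserve essential extensions in general (it does for $t_{\alpha}$-torsion-free objects, which is not automatic for each syzygy). Even granting the socle computation, your identification of $H^n$ of the RHom complex with a free $\End$-module on $\mu^n(\alpha)$ copies rests on the differentials in $\Hom_{\CA_\alpha}(H(\alpha)_\alpha,I^\bullet_\alpha)$ being zero, and that is exactly what minimality is supposed to give. So both ingredients of the reduction --- the off-diagonal vanishing and the minimality transport --- are nontrivial and unproved. You correctly diagnose these as the obstacles, but the writeup deploys them as if they were routine.

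In short: the plan is conceptually clean and would, if the central lemma held, give a uniform treatment of (1)--(3), but the lemma essentially is the theorem (part (3)) in disguise, and it requires the locally stable hypothesis everywhere, not just in (3). The paper's argument is less slick but uses only what each part's hypotheses actually grant.
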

\begin{proof}
(1) Assume that $X\ncong 0$. Since $X$ is a left bounded complex,  there exists $i_0\in\Z$ such that $H^{i}(X)=0$ for all $i<i_0$ and $H^{i_0}(X)\neq 0$. Taking $\alpha\in\AAss(H^{i_0}(X))$, there exists a monoform subobject $H$ of $H^{i_0}(X)$ such that $\overline{H}=\alpha$ and so according to Lemma \ref{simp}, $H_{\alpha}=H(\alpha)_{\alpha}$ is a subobject of $H^{i_0}(X_{\alpha})$ so that $\Hom_{\CA_{\alpha}}(H(\alpha)_{\alpha}, H^{i_0}(X_{\alpha}))\neq 0$. Therefore we have the following isomorphism of abelian groups $H^{i_0}({\bf R}\Hom_{\CA_{\alpha}}(H(\alpha)_{\alpha},X_{\alpha}))\cong\Hom_{\D^+(\CA_{\alpha})}(H(\alpha)_{\alpha},X_{\alpha}[i_0])
\cong\Hom_{\CA_{\alpha}}(H(\alpha)_{\alpha},H^{i_0}(X_{\alpha}))$ where the last term is nonzero. Thus ${\bf R}\Hom_{\CA_{\alpha}}(H(\alpha)_{\alpha},X_{\alpha})\neq 0$.

 (2) Given $X\in\im({\bf R}\Gamma_W)$, there exists an injective complex $I\in\D^+(\CA)$ such that 
$X\cong \Gamma_W(I)$. Replacing $I$ by $\Gamma_W(I)$ we may assume that each component of $I$ is direct sum of $E(\alpha)$ in which $\alpha\in W$. For any $\alpha\in W$, using Lemma \ref{gammon}, we have $\Gamma_W(E(\alpha))=E(\alpha)$; and hence $\ASupp(E(\alpha))\subseteq W$. Thus for any $\beta\in\ASpec\CA\setminus W$, we have $E(\alpha)_{\beta}=0$ and so $I_{\beta}=0$. This forces that $X_{\beta}\cong 0$ and so ${\bf R}\Hom_{\CA_{\beta}}(H(\beta)_{\beta},X_{\beta})=0$ for all $\beta\in\ASpec\CA\setminus W$.
Conversely assume that ${\bf R}\Hom_{\CA_{\beta}}(H(\beta)_{\beta },X_{\beta})=0$ for all $\beta\in\ASpec\CA\setminus W$. Since $(\im{\bf R}\Gamma_W,\Ker{\bf R}\Gamma_W)$ is a stable t-structure, there is a triangle 
$${\bf R}\Gamma_W(X)\stackrel{\Phi(X)}\longrightarrow X\To V\To {\bf R}\Gamma_W(X)[1]$$ 
in which $\Phi$ is the unit morphism $i\circ\rho$  to the identity functor in terms of Theorem \ref{miyachi} and $V\in\Ker{\bf R}\Gamma_W$. Considering an injective resolution $I$ of $X$ we have an exact sequence of complexes $$0\To
\Gamma_W(I)\To I\To I/\Gamma_W(I)\To 0.$$  According to [GM, Chap. IV, 13. Lemma], this exact sequence induces a triangle $$\Gamma_W(I)\stackrel{\Phi(I)}\longrightarrow I\To I/\Gamma_W(I)\To\Gamma_W(I)[1]$$ and so $I/\Gamma_W(I)$ is an injective resolution of $V$ whose components are direct sums of $E(\beta)$ with $\beta\in\ASpec\CA\setminus W$. Suppose $V\ncong 0$ and so similar to the proof of (1), there exists $i_0\in\Z$ such that $H^{i_0}(V)\neq 0$ and $H^{i}(V)= 0$ for all $i<i_0$. Taking $\beta\in\AAss(H^{i_0}(V))$ we have ${\bf R}\Hom_{\CA_{\beta}}(H(\beta)_{\beta},V_{\beta})\neq 0$. Since $H^{i_0}(V)=\Ker d^{i_0}/\im d^{i_0-1}$ where $I/\Gamma_W(I)=0\To J^t\stackrel{d^t}\To J^{t+1}\To\dots \To J^{i_0-1}\stackrel{d^{i_0-1}}\To J^{i_0}\stackrel{d^{i_0}}\To\dots$ and $\im d^{i_0-1}$ is an injective object, $H^{i_0}(V)$ is a subobject of $\Ker d^{i_0}\subseteq J^{i_0}$. This implies that $\beta\in\ASpec\CA\setminus W$. Since ${\bf R}\Gamma_W(X)\in\im({\bf R}\Gamma_W)$ it follows from the first part and assumption that ${\bf R}\Hom_{\CA_{\beta}}(H(\beta)_{\beta},X_{\beta})={\bf R}\Hom_{\CA_{\beta}}(H(\beta)_{\beta},{\bf R}\Gamma_W(X)_{\beta})=0$; and hence $
{\bf R}\Hom_{\CA_{\beta}}(H(\beta)_{\beta},V_{\beta})=0$ which is a contradiction. Thus $V\cong 0$ and so $X\cong{\bf R}\Gamma_W(X)\in\im{\bf R}\Gamma_W$. 

 (3) Suppose that $X\in\Ker{\bf R}\Gamma_W$ and so ${\bf R}\Gamma_W(X)\cong 0$. Taking an injective resolution $I$ of $X$, we deduce that $\Gamma_W(I)$ is exact and so $X$ is quasi-isomorphic to $I/\Gamma_W(I)$. Replacing $I$ by $I/\Gamma_W(I)$ we may assume that $I$ consists of injective objects $E(\beta)$ such that $\beta\in\ASpec\CA\setminus W$. 
 Therefore it suffices to show that for any $\alpha\in W$ and $\beta\in\ASpec\CA\setminus W$, we have $$\Hom_{\CA_{\alpha}}(H(\alpha)_{\alpha},E(\beta)_{\alpha})=0.$$  If $E(\beta)_{\alpha}=0$, there is nothing to prove and so we may assume that $E(\beta)_{\alpha}\neq 0$. Since $\CX(\alpha)$ is stable MinASupp$(E(\beta))=\{\beta\}$ and since $\alpha\in\ASupp(E(\beta))$, we have $\beta\leq \alpha$.
 Suppose that $\Hom_{\CA_{\beta}}(H(\alpha)_{\alpha},E(\beta)_{\alpha})\neq 0$. By the adjointness we have the following isomorphism of the abelian groups $$\Hom_{\CA_{\alpha}}(H(\alpha)_{\alpha},E(\beta)_{\alpha})\cong \Hom_{\CA}(H(\alpha),GF(E(\beta)))$$
 where $F:\CA\To \CA_{\alpha}$ is the left adjoint functor with its right adjoint functor $G:\CA_{\alpha}\To\CA$.
We notice that $t_{\alpha}(E(\beta))=0$; otherwise we have $\ASupp(t_{\alpha}(E(\beta)))\subseteq\ASupp(\CX(\alpha))$ which is a contradiction as $\beta\notin\ASupp(\CX(\alpha))$. Thus $E(\beta)$ is an essential subobject of $GF(E(\beta))$. Now, if $f$ is any nonzero morphism in $\Hom_{\CA}(H(\alpha),GF(E(\beta)))$, then $\im f\cap E(\beta)\neq 0$ so that $\beta\in\AAss(\im f)$; but $\ASupp(\im f)\subseteq\ASupp(H(\alpha))\subseteq\ASupp(E(\alpha))\subseteq W$ which is a contradiction. Therefore ${\bf R}\Hom_{\CA_{\alpha}}(H(\alpha)_{\alpha},X_{\alpha})\cong\Hom_{\CA_{\alpha}}(H(\alpha)_{\alpha},I_{\alpha})=0$ for all $\alpha\in W$. Conversely assume that ${\bf R}\Hom_{\CA_{\alpha}}(H(\alpha)_{\alpha},X_{\alpha})=0$ for all $\alpha\in W$ and take a triangle $${\bf R}\Gamma_W(X)\To X\To V\To {\bf R}\Gamma_W(X)[1]$$ such that $V\in\Ker{\bf R}\Gamma_W$. Hence ${\bf R}\Gamma_W(X)_{\alpha}\To X_{\alpha}\To V_{\alpha}\To {\bf R}\Gamma_W(X)_{\alpha}[1]$ is a triangle in $\D^+(\CA_{\alpha})$ for all $\alpha\in\ASpec\CA$. It follows from the first part that ${\bf R}\Hom_{\CA_{\alpha}}(H(\alpha)_{\alpha},V_{\alpha})=0$ for all $\alpha\in W$. Hence for any $\alpha\in W$, it follows from the previous triangle and the hypothesis that $${\bf R}\Hom_{\CA_{\alpha}}(H(\alpha)_{\alpha},{\bf R}\Gamma_W(X)_{\alpha})=0.$$ Moreover, by (2) we have ${\bf R}\Hom_{\CA_{\alpha}}(H(\alpha)_{\alpha},{\bf R}\Gamma_W(X)_{\alpha})=0$ for all $\alpha\in\ASpec\CA\setminus W$. Hence (1) implies that ${\bf R}\Gamma_W(X)\cong 0$.
\end{proof}

\medskip
\begin{lemma}\label{ker}
Let $W$ be an open subset of $\ASpec\CA$ such that $\Gamma_W$ preserves injective objects and let $X\in\Ker{\bf R}\Gamma_W$ such that ${\bf R}\Hom_{\CA}(X,E(\alpha))=0$ for all $\alpha\in\ASpec\CA\setminus W$. Then $X\cong 0$.
\end{lemma}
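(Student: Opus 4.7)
The plan is to argue by contradiction. Suppose $X \ncong 0$; then I shall produce an atom $\alpha \in \ASpec\CA \setminus W$ together with a nonzero morphism from the lowest nonvanishing cohomology of $X$ into $E(\alpha)$, and use the canonical truncation triangle to translate this into a nonzero element of an $\Ext$-group that the hypothesis ${\bf R}\Hom_\CA(X,E(\alpha))=0$ forces to vanish.

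The first step is to put $X$ in a convenient form. Take an injective resolution $J$ of $X$. Since $X\in\Ker{\bf R}\Gamma_W$, the subcomplex $\Gamma_W(J)$ is acyclic, and because $\Gamma_W$ preserves injectives, $\Gamma_W(J^n)$ is an injective direct summand of $J^n$ for each $n$. Hence $J/\Gamma_W(J)$ is again an injective resolution of $X$; using Matlis decomposition together with Lemma \ref{monpres} and Lemma \ref{gammon}, each of its components is a direct sum of objects $E(\beta)$ with $\beta\in\ASpec\CA\setminus W$. After this replacement I may assume $J$ is bounded below and every $J^n$ has this form.

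Now pick the minimal integer $i_0$ with $H^{i_0}(X)\neq 0$. Because $H^i(J)=0$ for $i<i_0$ and $J$ is bounded below, an inductive argument (at each stage an injective subobject of an injective is a direct summand) shows that $\im d^{i_0-1}$ is itself an injective object. Consequently it splits off inside $\Ker d^{i_0}$, so $H^{i_0}(X)=\Ker d^{i_0}/\im d^{i_0-1}$ embeds as a direct summand of $\Ker d^{i_0}\subseteq J^{i_0}$. This yields $\AAss(H^{i_0}(X))\subseteq\AAss(J^{i_0})\subseteq\ASpec\CA\setminus W$. Pick $\alpha\in\AAss(H^{i_0}(X))$, fix a monoform subobject $H\subseteq H^{i_0}(X)$ with $\overline{H}=\alpha$, and extend the inclusion $H\hookrightarrow E(H)=E(\alpha)$ to a nonzero morphism $\varphi\colon H^{i_0}(X)\to E(\alpha)$ using the injectivity of $E(\alpha)$.

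Finally, apply ${\bf R}\Hom_\CA(-,E(\alpha))$ to the canonical truncation triangle
\[
H^{i_0}(X)[-i_0]\To X\To\tau^{\geq i_0+1}X\To H^{i_0}(X)[-i_0+1],
\]
which is available because $\tau^{\leq i_0}X\simeq H^{i_0}(X)[-i_0]$ by minimality of $i_0$. The hypothesis kills the middle term, collapsing the long exact sequence to an isomorphism $\Hom_\CA(H^{i_0}(X),E(\alpha))\cong\Ext_\CA^{1-i_0}(\tau^{\geq i_0+1}X,E(\alpha))$. But $\tau^{\geq i_0+1}X$ admits an injective resolution concentrated in degrees $\geq i_0+1$ while $E(\alpha)$ is concentrated in degree $0$, so the $\Hom^\bullet$-complex computing this $\Ext$ is zero above cohomological degree $-i_0-1$; in particular the right-hand side vanishes. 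This contradicts $\varphi\neq 0$, and hence $X\cong 0$. The single non-formal point is the injectivity of the coboundary $\im d^{i_0-1}$: it is precisely what confines $\AAss(H^{i_0}(X))$ to $\ASpec\CA\setminus W$ and thereby licenses the use of the hypothesis.
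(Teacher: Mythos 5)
Your proof is correct, and it is essentially the argument the paper delegates to [YY, Lemma 2.9(1)]: the paper's own ``proof'' is only that citation, and your write-up reconstructs the intended argument in the atom-spectrum setting with exactly the devices the paper itself uses in the proposition preceding the lemma (replacing $J$ by $J/\Gamma_W(J)$ so that all components are sums of $E(\beta)$ with $\beta\in\ASpec\CA\setminus W$, the injectivity of $\im d^{i_0-1}$ below the first nonvanishing cohomology, and a choice of $\alpha\in\AAss(H^{i_0}(X))$). The only cosmetic difference is that you extract the contradiction via the truncation triangle $\tau^{\leq i_0}X\To X\To\tau^{\geq i_0+1}X$ rather than by exhibiting an explicit non-bounding cocycle in $\Hom^{\bullet}(J/\Gamma_W(J),E(\alpha))$, which is equivalent and equally valid.
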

\begin{proof}
The proof is similar to the proof of [YY, Lemma 2.9(1)]. 
\end{proof}

\medskip

\begin{theorem}\label{tttt}
Let $\CA$ be locally stable and let $\delta$ be an abstract local cohomology on $\D^+(\CA)$. Then there exists an open subset $W$ of $\ASpec\CA$ such that $\delta={\bf R}\Gamma_W$.
\end{theorem}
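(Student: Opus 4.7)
The plan is to define $W := \{\alpha \in \ASpec\CA : E(\alpha) \in \im(\delta)\}$ and prove $\delta \cong {\bf R}\Gamma_W$. First I would verify that $W$ is open: given $\alpha \in W$ and a monoform $H$ with $\overline{H} = \alpha$, for each $\beta \in \ASupp(H)$ there is a monoform subquotient $G$ of $H$ in the class $\beta$, so $G \hookrightarrow E(\alpha)/K$ for some $K$ and $G \hookrightarrow E(\beta)$; injectivity of $E(\beta)$ extends the latter through $E(\alpha)/K$ and then lifts to a nonzero morphism $E(\alpha) \to E(\beta)$. Since $E(\alpha) \in \im(\delta)$, the stable $t$-structure orthogonality $\Hom_{\D^+(\CA)}(\im(\delta), \Ker(\delta)) = 0$ forces $E(\beta) \notin \Ker(\delta)$, hence $E(\beta) \in \im(\delta)$ and $\beta \in W$. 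Because $\CA$ is locally stable, Proposition \ref{stab} then guarantees that $\Gamma_W$ preserves injectives, and Proposition \ref{radab} gives that ${\bf R}\Gamma_W$ is itself an abstract local cohomology on $\D^+(\CA)$ whose division of indecomposable injectives coincides with that of $\delta$: $E(\alpha) \in \im({\bf R}\Gamma_W)$ iff $\alpha \in W$.

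Next I would show $\im(\delta) \subseteq \im({\bf R}\Gamma_W)$ via an application of Lemma \ref{ker}. For $X \in \im(\delta)$, consider the triangle ${\bf R}\Gamma_W(X) \to X \to C \to {\bf R}\Gamma_W(X)[1]$ with $C \in \Ker({\bf R}\Gamma_W)$. For any $\alpha \notin W$ the stable $t$-structure gives ${\bf R}\Hom_\CA(X, E(\alpha)) = 0$ (since $E(\alpha) \in \Ker(\delta)$), while ${\bf R}\Hom_\CA({\bf R}\Gamma_W(X), E(\alpha)) = 0$ follows from the injective representative of ${\bf R}\Gamma_W(X)$ having components $E(\gamma)$ with $\gamma \in W$, together with the vanishing $\Hom_\CA(E(\gamma), E(\alpha)) = 0$ for such pairs (the extension argument of the first paragraph). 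Hence ${\bf R}\Hom_\CA(C, E(\alpha)) = 0$ for every $\alpha \notin W$, and Lemma \ref{ker} forces $C \cong 0$, so $X \in \im({\bf R}\Gamma_W)$. By orthogonality this is equivalent to $\Ker({\bf R}\Gamma_W) \subseteq \Ker(\delta)$, which can also be derived directly: for $X \in \Ker({\bf R}\Gamma_W)$ the morphism $\delta(X) \to X$ (with $\delta(X) \in \im(\delta) \subseteq \im({\bf R}\Gamma_W)$) factors through ${\bf R}\Gamma_W(X) = 0$ and so is zero, splitting the $\delta$-triangle and forcing $\delta(X) \in \im(\delta) \cap \Ker(\delta) = 0$.

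For the reverse inclusion $\im({\bf R}\Gamma_W) \subseteq \im(\delta)$, take $Y \in \im({\bf R}\Gamma_W)$ and form its $\delta$-triangle $\delta(Y) \to Y \to V \to \delta(Y)[1]$. Since $\delta(Y) \in \im({\bf R}\Gamma_W)$ from the previous step, the cone $V$ lies in $\im({\bf R}\Gamma_W) \cap \Ker(\delta)$, and it suffices to show $V \cong 0$. Parts (1) and (2) of the preceding proposition reduce this to proving ${\bf R}\Hom_{\CA_\gamma}(H(\gamma)_\gamma, V_\gamma) = 0$ for every $\gamma \in W$. From $V \in \Ker(\delta)$ and $E(\gamma) \in \im(\delta)$ one has ${\bf R}\Hom_\CA(E(\gamma), V) = 0$. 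I would then pick an injective representative $J_V$ of $V$ with components $E(\eta)$, $\eta \in W$, and translate the Hom in $\CA_\gamma$ back to $\CA$ via the adjunction $F_\gamma \dashv G_\gamma$: local stability implies that $F_\gamma$ preserves injectives, that $F_\gamma E(\eta) \neq 0$ only when $\eta \leq \gamma$, and that for such $\eta$ the object $E(\eta)$ is faithfully $t_\gamma$-injective so $G_\gamma F_\gamma E(\eta) \cong E(\eta)$. This identifies ${\bf R}\Hom_{\CA_\gamma}(H(\gamma)_\gamma, V_\gamma)$ with a complex whose terms are built from $\Hom_\CA(H(\gamma), E(\eta))$, and the ambient vanishing forces its acyclicity. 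Then $V \cong 0$ and $Y \cong \delta(Y) \in \im(\delta)$, giving $\im(\delta) = \im({\bf R}\Gamma_W)$; since both $\delta$ and ${\bf R}\Gamma_W$ equal $i \circ \rho$ for the right adjoint $\rho$ to the embedding of this common subcategory, we conclude $\delta \cong {\bf R}\Gamma_W$.

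The main obstacle is the translation from the ambient vanishing ${\bf R}\Hom_\CA(E(\gamma), V) = 0$ to the localized vanishing ${\bf R}\Hom_{\CA_\gamma}(H(\gamma)_\gamma, V_\gamma) = 0$. This is where the local stability hypothesis on $\CA$ is essential: through $\MinASupp(E(\eta)) = \{\eta\}$ and the faithful $t_\gamma$-injectivity of each surviving $E(\eta)$, the adjunction $F_\gamma \dashv G_\gamma$ is rigid enough to transport vanishing across the quotient, paralleling but extending to complexes the argument used in part (3) of the preceding proposition. All remaining steps reduce to routine triangle chases and direct invocations of Lemma \ref{ker} together with the preceding propositions.
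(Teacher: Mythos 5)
Your setup and the first inclusion track the paper closely and are fine: the definition $W=\{\alpha: E(\alpha)\in\im\delta\}$, the openness of $W$ (your extension argument through $E(\alpha)/K$ is in fact a little cleaner than the paper's, which invokes local stability already at that point), the use of Proposition \ref{stab} to get that $\Gamma_W$ preserves injectives, and the proof of $\im\delta\subseteq\im{\bf R}\Gamma_W$ via the triangle, the vanishing of $\Hom$ into $E(\beta)[n]$ for $\beta\notin W$, and Lemma \ref{ker} are all essentially the paper's argument.

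The reverse inclusion is where you leave the paper's route, and your key step is a genuine gap. You reduce to showing ${\bf R}\Hom_{\CA_\gamma}(H(\gamma)_\gamma,V_\gamma)=0$ for $\gamma\in W$, and propose to extract this from the ambient vanishing ${\bf R}\Hom_{\CA}(E(\gamma),V)=0$ via the adjunction $F_\gamma\dashv G_\gamma$. But after your (correct) identifications, the localized complex has terms built from $\Hom_{\CA}(H(\gamma),E(\eta))$ with $\eta\le\gamma$, whereas the complex you know to be acyclic has terms $\Hom_{\CA}(E(\gamma),\bigoplus_i E(\eta_i))$ over all $\eta_i\in W$: the source object changes from $E(\gamma)$ to $H(\gamma)$ and the target complex changes from $J_V$ to its $t_\gamma$-torsion-free quotient. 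The natural comparison maps (restriction along $H(\gamma)\subseteq E(\gamma)$ and projection onto the torsion-free part) are only degreewise surjective with kernels that are not obviously acyclic (e.g.\ $\Hom_{\CA}(E(\gamma),E(\eta))$ need not vanish for $\gamma<\eta$), so acyclicity does not transfer; ``local stability makes the adjunction rigid enough'' is an assertion, not a proof, and I see no pointwise argument for the implication. Indeed the only way I see to validate it is to first prove $V\cong 0$ outright, which is what the step was supposed to establish. The paper avoids this entirely: from $V\cong{\bf R}\Gamma_W(V)$ it takes an injective resolution $I$ of $V$ whose components are sums of $E(\alpha)$ with $\alpha\in W$, all lying in $\im\delta$, and concludes $V\cong I\cong\delta(I)=\delta(V)=0$ directly from the dividing property, never passing to the quotient categories $\CA_\gamma$. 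To repair your proof you should replace the transfer step by an argument of this kind (or by a truncation/homotopy-colimit argument showing that a left-bounded complex of injectives with components in $\im\delta$ lies in $\im\delta$, equivalently that $\Hom_{\K}(J_V,J_V)=0$ follows from $\Hom_{\D}(E(\eta),V[n])=0$ for all $\eta\in W$ and $n$); as it stands, the crucial inference is unsupported.
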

\begin{proof}
Consider $W=\{\alpha\in\ASpec\CA| \hspace{0.1cm}E(\alpha)\in\im\delta\}$. We notice that $(\im\delta,\Ker \delta)$ is a stable t-structure. Assume that $\alpha\in W$ and $\beta\in\ASpec\CA$ such that $\alpha\leq \beta$. We want to show that 
$E(\beta)\in\im\delta$ and so $\beta \in W$. 
Assume that $E(\beta)\notin\im\delta$. As $(\im\delta,\Ker \delta)$ divides the indecomposable injective objects, $E(\beta)\in\Ker\delta$ so that $\Hom_{\D^+(\CA)}(E(\alpha),E(\beta))= 0$. On the other hand, since $\alpha\leq \beta$, there exist monoform objects $G$ and $H$ of $\CA$ such that $\alpha=\overline{H}$ and $\beta=\overline{G}$ and $G=H/K$ for some subobject $K$ of $H$. Therefore the nonzero morphism $H\twoheadrightarrow G\hookrightarrow E(\beta)$ induces a nonzero morphism $E(\alpha)\To E(\beta)$ which is a contradiction. We now assert that $W$ is an open subset of $\ASpec\CA$. Given $\alpha\in W$ and monoform object $H$ of $\CA$ with $\alpha=\overline{H}$, we show that $\ASupp(H)\subseteq W$. Assume that $\beta\in\ASupp(H)$ but $\beta\notin W$. By the first argument $\alpha\nleq \beta$; and hence $\alpha\in\ASupp(\CX(\beta))$ so that there exists a monoform object $H_1\in\CX(\beta)$ such that $\alpha=\overline{H_1}$. Since $\CA$ is locally stable, $E(\alpha)\in\CX(\beta)$ and so $E(\alpha)_{\beta}=0$. Therefore, using [K2, Proposition 6.2], $\beta\notin\ASupp(E(\alpha))$ which contradicts the fact that $\beta\in\ASupp(H)$. As $\CT_{\Gamma_W}=\ASupp^{-1}(W)$ is a localizing subcategory, in view of the assumption it is stable and so using Proposition \ref{stab}, the functor $\Gamma_W$ preserves injective objects. 
For every $\alpha\in W$, we have $E(\alpha)\in\im\delta\cap\im{\bf R}\Gamma_W$ and for every $\beta\in\ASpec\CA\setminus W$, we have $E(\beta)\in\Ker\delta\cap\Ker{\bf R}\Gamma_W$. By Theorem \ref{miyachi} , it is enough to show that $\im\delta=\im{\bf R}\Gamma_W$. We first show that $\im\delta\subseteq{\bf R}\Gamma_W$. To do this, assume that $X\in\im\delta$. Then there is a triangle $${\bf R}\Gamma_W(X)\To X\To V\To{\bf R}\Gamma_W(X)[1]$$ where $V\in\Ker{\bf R}\Gamma_W$. Given $\beta\in\ASpec\CA\setminus W$, we have $E(\beta)\in\Ker\delta\cap\Ker{\bf R}\Gamma_W$ and so $\Hom_{\D^+(\CA)}(X,E(\beta)[n])=0=\Hom_{D^+(\CA)}({\bf R}\Gamma_W(X),E(\beta)[n])$ for any integer $n$. Then viewing the above triangle we have $\Hom_{\D^+(\CA)}(V,E(\beta)[n])=0$ for any integer $n$ and so $H^n({\bf R}\Hom_{\CA}(V,E(\beta)))\cong\Hom_{\D^+(\CA)}(V,E(\beta)[n])=0$ for any integer $n$. Since $V\in\Ker{\bf R}\Gamma_W$, by using Lemma \ref{ker}, we have $V\cong 0$ so that $X\cong {\bf R}\Gamma_W(X)$. Now we prove that $\im{\bf R}\Gamma_W\subseteq\im\delta$. Given $X\in\im{\bf R}\Gamma_W$, there exists a  triangle $$\delta(X)\To X\To Y\To\delta(X)[1]\hspace{0.2cm}(\dag)$$ with $Y\in\Ker\delta$. By the first inclusion we have $\delta(X)\in\im{\bf R}\Gamma_W$ and so ${\bf R}\Gamma_W(\delta(X))=\delta(X)$ and hence there is an traingle $\delta(X)\To X\To{\bf R}\Gamma_W(Y)\To \delta(X)[1]$. The natural morphism ${\bf R}\Gamma_W\To {\bf 1}$ of functors on $\D^+(\CA)$ implies that $Y\cong{\bf R}\Gamma_W(Y)$. Hence $Y$ has an injective resoultion $I$ such that $\alpha\in W$ for any injective indecomposable $E(\alpha)$ appearing in any component of $I$.  Therefore we deduce that $0=\delta(Y)\cong\delta(I)\cong I\cong Y$ and so viewing $(\dag)$,  we have $X\cong\delta (X)$. 
\end{proof}


\end{document}